\DeclareMathOperator{\supp}{supp}
\newcommand{\jBra}[1]{\langle #1 \rangle}
\newcommand{\bbR}{\mathbb{R}}
\newcommand{\bbOne}{\mathds{1}}
\newcommand{\fR}{\mathfrak{R}}
\newcommand{\cF}{\mathcal{F}}
\newcommand{\DFT}{\widetilde{\mathcal{F}}}
\newcommand{\frR}{\mathfrak{R}}
\newcommand{\waveop}{\Omega}
\newcommand{\rmI}{\mathrm{I}}
\newcommand{\rmII}{\mathrm{II}}
\newtheorem{thm}{Theorem}
\newtheorem{defn}{Definition}
\newtheorem{prop}[thm]{Proposition}
\newtheorem{lemma}[thm]{Lemma}
\newtheorem{cor}[thm]{Corollary}
\newtheorem{rmk}{Remark}
\begin{document}
\title[Asymptotics for NLS with slowly decaying potential]{Asymptotics for the cubic 1D NLS with a slowly decaying potential}

\author[Gavin Stewart]{Gavin Stewart}
 \address[Gavin Stewart]{\newline
        Department of Mathematics, \newline
         Rutgers University, New Brunswick, NJ 08903 USA.}
  \email[]{gavin.stewart@rutgers.edu}
  \thanks{2020 \textit{ Mathematics Subject Classification.}   35Q55  }

\date{\today}
\begin{abstract}
    We consider the asymptotics of the one-dimensional cubic nonlinear Schr\"odinger equation with an external potential $V$ that does not admit bound states.  Assuming that $\jBra{x}^{2+}V(x) \in L^1$ and that $u$ is orthogonal to any zero-energy resonances, we show that solutions exhibit modified scattering behavior.  Our decay assumptions are weaker than those appearing in previous work and are likely nearly optimal for exceptional potentials, since the M\o{}ller wave operators are not known to be bounded on $L^p$ spaces unless $\jBra{x}^2V \in L^1$.  

    Our method combines two techniques: First, we prove a new class of linear estimates, which allow us to relate vector fields associated with the equation with potential to vector fields associated with the flat equation, up to localized error terms.  By using the improved local decay coming from the zero-energy assumption, we can treat these errors perturbatively, allowing us to treat the weighted estimates using the same vector field arguments as in the flat case.  Second, we derive the asymptotic profile using the wave packet testing method introduced by Ifrim and Tataru.  Again, by taking advantage of improved local decay near the origin, we are able to treat the potential as a lower-order perturbation, allowing us to use tools developed to study the flat problem.
\end{abstract}

\maketitle

\section{Introduction}

We will consider the long-time dynamics of small solutions to the nonlinear Schr\"odinger equation
\begin{equation}\label{eqn:main-eqn}
    i\partial_t u -\Delta u + V(x) u = \pm |u|^2 u
\end{equation}
on $\bbR_x \times \bbR_t$.  This equation is a prototypical model for a nonlinear dispersive equation with an external potential, and has been studied extensively: see~\cite{frohlichSolitaryWaveDynamics2004,bronskiSolitonDynamicsPotential2000,holmerSolitonInteractionSlowly2008,sofferSelectionGroundState2004,germainNonlinearSchrodingerEquation2018,germainNonlinearResonancesPotential2015,chen1dimensionalNonlinearSchrodinger2022,chen1dCubicNLS2024,delortModifiedScatteringOdd2016,naumkinSharpAsymptoticBehavior2016,chenLongtimeDynamicsSmall2023a,pusateriBilinearEstimatesPresence2024a}.  

Our main goal in this paper is to prove modified scattering asymptotics for~\eqref{eqn:main-eqn} for potentials decaying slowly at infinity.  If we want solutions to~\eqref{eqn:main-eqn} to disperse, the linear operator $H = -\Delta + V$ cannot have bound states, so we will always make the spectral assumption
\begin{equation}
    H = -\Delta + V \text{ has no eigenvalues}\label{eqn:no-bd-states-assumption}
\end{equation}
We also assume that $V$ satisfies the decay assumption:
\begin{equation}\label{eqn:V-decay-assumption}
    V \in L^1_\gamma \text{ with }\gamma > 2
\end{equation}
where $L^1_\gamma = \{V \in L^1: \jBra{x}^\gamma V \in L^1\}$.  This is an improvement on the assumption $\gamma > 5/2$ in previous works~\cite{chen1dimensionalNonlinearSchrodinger2022,chen1dCubicNLS2024}, and may be essentially optimal for exceptional potentials in the scale of weighted $L^1$ spaces, since $L^p$ boundedness of wave operators is not known for $\gamma < 2$~\cite{danconaLpBoundednessWaveOperator2006}.  Finally, we assume that the distorted Fourier transform of $u$ at $k = 0$ vanishes for all times:
\begin{equation}\label{eqn:zero-freq-hypo}
    \tilde{u}(0,t) \equiv 0
\end{equation}
which is equivalent to assuming that $u$ is orthogonal to any zero energy resonance.  Note that~\eqref{eqn:zero-freq-hypo} is automatically satisfied if $V$ is generic (see~\Cref{def:generic-def}), and will be satisfied if $V$ is an even potential with an even zero energy resonance and the initial data for~\eqref{eqn:main-eqn} is odd. 

A central difficulty in proving asymptotics for equations with potentials is to understand the space-time resonance structure of the nonlinearity.  In the flat case ($V = 0$), the eigenfunctions of $H_0 = -\Delta$ are the complex exponentials $e^{ikx}$, which have the product property
\begin{equation*}
    e^{ik_1x}e^{-ik_2x}e^{ik_3x} = e^{i(k_1-k_2+k_3)x}
\end{equation*}
In particular, this means that the interaction of three waves with frequencies $k_1$, $k_2$ and $k_3$ through the cubic nonlinearity will produce an output wave with frequency $k = k_1 - k_2 + k_3$.  Once a potential is introduced, however, this exact correspondence between input and output frequencies is lost, which leads to major difficulties in even defining resonances.  It is only in the past decade that techniques for addressing this difficulty have been proposed.  We will briefly review these methods in~\Cref{sec:prev-work}.  A common thread through all methods is the following idea: \textit{The nonlinear interactions in~\eqref{eqn:main-eqn} resemble those of the flat problem (up to manageable errors)}.

We believe that our method embodies this philosophy with minimal technical complexity.  In fact, after proving the appropriate linear estimates, we are able to reduce all nonlinear estimates for~\eqref{eqn:main-eqn} to nonlinear estimates in the flat case, up to errors which are localized in space and can thus be controlled using improved local decay given by~\Cref{lem:improved-local-decay}.  Thus, we can completely sidestep the difficulties associated with multilinear estimates in distorted frequency space at the minor cost of proving certain linear estimates.  We believe that this has the potential to simplify many arguments in the field.

\subsection{Previous work}\label{sec:prev-work}

An early result on long-time asymptotics for~\eqref{eqn:main-eqn} with nonlinearities $|u|^pu$, $p > 3$ was proved by Cuccagna, Visciglia, and Georgiev, who showed that the Klainerman-Sobolev decay estimate can be modified to cover the non-flat case $V \neq 0$~\cite{cuccagnaDecayScatteringSmall2014}.  The assumption $p > 3$ is essential here, as the Klainerman-Sobolev estimate is known to be insufficient for $p = 3$ even in the flat case.  

In dimension $3$, L\'eger was able to prove global existence and scattering for quadratic nonlinear Schr\"odinger equations with small potentials~\cite{legerGlobalExistenceScattering2021}.  His technique is based on combining ideas from space-time resonances with a Born series expansion for the potential.  This has the advantage of generalizing to equations with small electromagnetic potentials~\cite{leger3DQuadraticNLS2020}.  However, in order for the Born series to converge, the potential must be small, which is not always true in applications.

The first technique to take advantage of the resonant structure in the equation with arbitrarily large potential was introduced by Germain, Hani, and Walsh in~\cite{germainNonlinearResonancesPotential2015} for three-dimensional NLS with nonlinearity $\overline{u}^2$ and later extended to the one-dimensional cubic equation with generic potential by Germain, Pusateri, and Rousset~\cite{germainNonlinearSchrodingerEquation2018}.  Since this technique has become the most prominent in the literature, we will briefly outline the key ideas in the one-dimensional case.  The main idea is to look at the \textit{nonlinear spectral measure}, which for the cubic nonlinearity in~\eqref{eqn:main-eqn} is given by
\begin{equation*}
    \mu(k,k_1,k_2,k_3) = \int \overline{\psi(x,k)} \psi(x, k_1)\overline{\psi(x,k_2)} \psi(x, k_3)\;dx
\end{equation*}
where $\psi(x,k)$ is a solution to
\begin{equation*}
    H \psi = k^2 \psi
\end{equation*}
with appropriate boundary conditions: see~\Cref{sec:spectral-theory} for a precise definition.  For the flat problem, $\psi(x,k) = (2\pi)^{-1/2} e^{ikx}$, and we have
\begin{equation*}
    \mu_\text{flat}(k, k_1, k_2, k_3) = \frac{1}{2\pi} \delta(k - k_1 + k_2 - k_3)
\end{equation*}
In general, $\mu$ is singular near $k = k_1 - k_2 + k_3$, and more regular elsewhere.  Thus, we can decompose the nonlinear spectral measure into a singular part $\mu_S$ and a regular part $\mu_R$.\footnote{In some situations, it can also be helpful to allow for an additional term $\mu_L$ that has better behavior at low frequencies, see~\cite{germainNonlinearSchrodingerEquation2018,chen1dCubicNLS2024}.  Since we assume that $\tilde{u}(0,t) = 0$, we will ignore this low-frequency improved term.}  The singular part has a structure very similar to the flat case, which allows the authors to obtain weighted estimates by modifying the arguments in~\cite{katoNewProofLongrange2011}, while the regular part is highly localized, allowing the authors to take advantage of improved local decay for these terms.  The final modified scattering result then follows after a careful stationary phase analysis of the nonlinearity in distorted Fourier space.  These techniques were refined by Chen and Pusateri in~\cite{chen1dimensionalNonlinearSchrodinger2022}, which compared to~\cite{germainNonlinearSchrodingerEquation2018} assumes much less on the potential and also simplifies the arguments for the weighted estimates.  A further extension in~\cite{chen1dCubicNLS2024} proves an analogous result for exceptional potentials with $\gamma > 5/2$ but with the assumption~\eqref{eqn:zero-freq-hypo} replaced by a much weaker assumption on the symmetry of the zero-energy resonances of $H$.

Alternative methods have also been introduced to prove modified scattering for~\eqref{eqn:main-eqn}.  In~\cite{naumkinSharpAsymptoticBehavior2016} and~\cite{naumkinNonlinearSchrodingerEquations2018a} Naumkin adapted the operator factorization technique introduced by Hayashi and Naumkin in~\cite{hayashiAsymptoticsLargeTime1998} to obtain asymptotics for~\eqref{eqn:main-eqn}.  Masaki, Murphy and Segata also used the operator factorization technique to show modified scattering for the equation with a repulsive $\delta$ potential~\cite{masakiModifiedScatteringOneDimensional2019}.   Delort has also proved modified scattering for odd initial data and an even generic potential by first applying the M\o{}ller wave operator $\Omega$ 
to convert to the flat problem with a pseudodifferential nonlinearity, and then exploiting the special structure of the wave operator for even generic potentials and odd solutions to control commutators between $J_0$ and $\waveop$~\cite{delortModifiedScatteringOdd2016}.  

Several remarkable recent works have shown that the techniques in~\cite{germainNonlinearResonancesPotential2015} can be extended to handle problems involving nonlinear bound states.  Chen has shown in~\cite{chenLongtimeDynamicsSmall2023a} that it is possible to treat the problem with~\eqref{eqn:no-bd-states-assumption} weakened to the assumption that $H$ has only one bound state.  In this case, solutions decompose into a nonlinear bound state and a radiation component that exhibits modified scattering behavior.  Collot and Germain have shown in~\cite{collotAsymptoticStabilitySolitary2023a} that it is possible to extend the methods from~\cite{germainNonlinearResonancesPotential2015} to prove the asymptotic stability of solitons under the assumption that there are no threshold resonances.  This requires working with a matrix version of~\eqref{eqn:main-eqn} and a careful analysis of the modulation parameters for the soliton.  The stability of the soliton for the cubic NLS in dimension 1 under even perturbations was recently proved by Li and L\"uhrmann~\cite{liAsymptoticStabilitySolitary2024a}.

There has also been a great deal of interest in understanding Klein-Gordon equations with potential, since these arise naturally when considering the stability of solitons and kinks~\cite{legerInternalModesRadiation2021,delortLongtimeDispersiveEstimates2022,liSolitonDynamics1D2023,luhrmannAsymptoticStabilitySineGordon2023a,luhrmannCodimensionOneStability2024a,legerInternalModeinducedGrowth2022,kowalczykKinkDynamicsOdd2022,kowalczykKinkDynamicsPhi2016,palaciosLocalEnergyControl2024,cuccagnaSmallEnergyStabilization2023,kowalczykKinkDynamicsOdd2022,kowalczykKinkDynamicsModel2017}.  Compared with the Schr\"odinger equation, the Klein-Gordon equations present additional difficulties, including the existence of internal modes (eigenvectors of $-\Delta + V + 1$ with eigenvalue $\lambda \in (0,1)$)~\cite{delortLongtimeDispersiveEstimates2022,legerInternalModesRadiation2021,legerInternalModeinducedGrowth2022}, and variable coefficient nonlinear terms~\cite{lindbladDecayAsymptoticsOneDimensional2020,lindbladDecayAsymptoticsOneDimensional2020,lindbladAsymptotics1DKleinGordon2021,germainQuadraticKleinGordonEquations2022a,germain1dQuadraticKlein2023,sterbenzDispersiveDecay1D2015,lindbladScatteringKleinGordonEquation2014}.  In particular, it is known that interactions between a zero energy resonance and a variable coefficient quadratic nonlinearity can cause a logarithmic slowdown to the $t^{-1/2}$ linear decay rate of the solution~\cite{lindbladAsymptotics1DKleinGordon2021,lindbladModifiedScattering1D2023}.

\subsection{Main result}

We will prove the following modified scattering result:

\begin{thm}\label{thm:main-theorem}
    Let $V$ satisfy~\cref{eqn:no-bd-states-assumption,eqn:V-decay-assumption}, and suppose that $u(x,t)$ satisfies \eqref{eqn:zero-freq-hypo}.  Then, there exists an $\epsilon_0 > 0$ such that if
    \begin{equation*}
        u(x,1) = e^{iH} u_*(x)
    \end{equation*}
    with
    \begin{equation}\label{eqn:initial-cond-size}
        \lVert \jBra{x} u_* \rVert_{L^2} + \lVert u_* \rVert_{H^1} = \epsilon < \epsilon_0
    \end{equation}
    then equation~\eqref{eqn:main-eqn} has a global solution decaying at the linear rate:
    \begin{equation}\label{eqn:u-decay}
        \lVert u(t) \rVert_{L^\infty} \lesssim t^{-1/2}
    \end{equation}
    More precisely, we have the modified scattering asymptotics
    \begin{equation}\label{eqn:u-asympt}
        u(x,t) = t^{-1/2}\exp\left(-i \frac{x^2}{4t} \mp i \int_1^t \frac{i}{s} |\tilde{u}(-\frac{x}{2t}, s)|^2\;ds\right) u_\infty(x/2t) + o_{t^{1/2}L^\infty_x \cap L^2_x}(1)
    \end{equation}
    where $o_{t^{1/2}L^\infty_x \cap L^2_x}(1)$ denotes a function $R(x,t)$ such that 
    $$\lim_{t \to \infty} t^{1/2} \lVert R \rVert_{L^\infty_x} + \lVert R \rVert_{L^2_x} = 0$$
\end{thm}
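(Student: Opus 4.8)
The plan is a continuity/bootstrap argument for the profile $f(t) = e^{-itH}u(t)$ (well-defined since \eqref{eqn:no-bd-states-assumption} makes the continuous spectral projection the identity and the distorted Fourier transform an $L^2$-isometry). I would control three quantities on a maximal interval $[1,T]$: the energy $\|u(t)\|_{H^1}$, a weighted norm $\|\tilde{J}u(t)\|_{L^2}$ where $\tilde{J}$ is the distorted analogue of the Galilean vector field $J_0 = x + 2it\partial_x$ (in distorted Fourier variables, essentially $\partial_k$ conjugated by $e^{itH}$), and the dispersive norm $t^{1/2}\|u(t)\|_{L^\infty}$. The bootstrap assumptions are $\|u(t)\|_{H^1} \lesssim \epsilon$, $\|\tilde{J}u(t)\|_{L^2} \lesssim \epsilon\,t^{\delta}$ for a small $\delta = \delta(\epsilon)$, and $\|u(t)\|_{L^\infty} \lesssim \epsilon\,t^{-1/2}$; at $t = 1$ these hold with room to spare because $f(1) = u_*$ and \eqref{eqn:initial-cond-size} controls exactly $\|u_*\|_{H^1}$ and $\|\jBra{x}u_*\|_{L^2}$ (the latter comparable, via the linear mapping estimates, to $\|\tilde{J}u(1)\|_{L^2}$). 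The energy bound is routine. The $L^\infty$ bound will be recovered from the weighted bound via a linear dispersive estimate for $e^{itH}$ together with the improved local decay of \Cref{lem:improved-local-decay}, which exploits \eqref{eqn:zero-freq-hypo}: the point is that $\tilde{u}(0,t) = 0$ upgrades the near-origin decay of $e^{itH}f$ from $t^{-1/2}$ to an integrable rate.

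The heart of the matter is the weighted estimate, and this is where the new linear estimates enter. From Duhamel, since the propagator commutes with $\tilde{J}$, it suffices to bound $\int_1^t e^{i(t-s)H}\,\tilde{J}\big(|u|^2 u\big)(s)\,ds$ in $L^2$. The strategy is to first use the linear estimates to trade $\tilde{J}$ for $J_0$: schematically $\tilde{J} = J_0 + \mathcal{E}$ where $\mathcal{E}$ is an error whose kernel decays rapidly away from the diagonal near the origin, i.e.\ its action on a function produces a spatially localized output. For the $J_0$ piece one invokes the exact flat Leibniz identity $J_0(|u|^2 u) = 2|u|^2 (J_0 u) - u^2\,\overline{J_0 u}$ and closes exactly as for the flat cubic NLS by the standard bilinear and dispersive bounds, losing only $t^{C\epsilon^2}$ from the resonant self-interaction. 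For the $\mathcal{E}$ piece, and more generally for the regular part $\mu_R$ of the nonlinear spectral measure, spatial localization together with \Cref{lem:improved-local-decay} makes the time integral absolutely convergent, so these are genuine perturbations; crucially, one never has to estimate a trilinear integral in distorted frequency space. This closes $\|\tilde{J}u(t)\|_{L^2} \lesssim \epsilon\,t^{\delta}$ and, with the $L^\infty$ bound, completes the bootstrap and yields \eqref{eqn:u-decay}.

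To extract the asymptotic profile I would use wave packet testing in the style of Ifrim and Tataru. For each velocity $v$, build an approximate linear wave packet $\Psi_v(x,t)$ by propagating under $e^{itH}$ a $t^{-1/2}$-scale bump in distorted frequency centered at $k = v$, so that $(i\partial_t + H)\Psi_v$ is negligible and $\Psi_v$ concentrates at physical scale $t^{1/2}$ along the ray $x \approx -2vt$. Set $\gamma(v,t) = \langle u(t), \Psi_v(t)\rangle$; pairing \eqref{eqn:main-eqn} with $\Psi_v$ and integrating by parts in the linear term produces an ODE of the form $\partial_t\gamma(v,t) = \mp \tfrac{i}{t}|\gamma(v,t)|^2\gamma(v,t) + \mathrm{Err}(v,t)$, where $\mathrm{Err}$ collects the discrepancy between $\Psi_v$ and a true linear solution, commutators of $\Psi_v$ with the nonlinearity, and the potential-dependent corrections — all shown to be integrable in $s$ by exactly the weighted bound and \Cref{lem:improved-local-decay} used above. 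Integrating the ODE gives that $|\gamma(v,t)|$ converges and that $\gamma(v,t)$ carries the logarithmic phase correction appearing in \eqref{eqn:u-asympt}; a stationary-phase reconstruction of $u(x,t)$ from $\gamma(\cdot,t)$ near the stationary point $v = -x/2t$, together with $|\gamma(v,t)| = |\tilde{u}(v,t)| + o(1)$, yields \eqref{eqn:u-asympt} in $t^{1/2}L^\infty_x$, and the $L^2_x$ part of the remainder follows from the weighted bound $\|\tilde{J}u(t)\|_{L^2} \lesssim \epsilon\,t^\delta$.

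I expect the main obstacle to be the second step: establishing the linear estimates that trade the distorted weight $\tilde{J}$ for the flat weight $J_0$ modulo genuinely localized, controllable errors, and doing so all the way down to the threshold $\gamma > 2$. This requires sharp control of the Jost/distorted Fourier basis $\psi(x,k)$ and its $k$-derivatives, particularly near $k = 0$, where for exceptional $V$ the zero-energy resonance makes the estimates delicate and where assumption \eqref{eqn:zero-freq-hypo} is precisely what rescues the low-frequency contribution. The regime $\gamma > 2$ is exactly where one can no longer use $L^p$-boundedness of the M\o{}ller wave operators as a black box, so the error terms have to be produced and handled by hand; ensuring that the gain from \Cref{lem:improved-local-decay} beats the time integration uniformly in every such error term is the crux of the whole argument.
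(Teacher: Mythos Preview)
Your overall strategy matches the paper's: bootstrap on $\lVert J_V u\rVert_{L^2}$ and $t^{1/2}\lVert u\rVert_{L^\infty}$, trade $J_V$ for $J_0$ to exploit the flat Leibniz identity, and recover asymptotics by wave packet testing. Two points of divergence are worth flagging.

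First, the $J$-trading is not simply $\tilde J = J_0 + \mathcal{E}$: the paper's decompositions are $J_V = T_{V,0}\, J_0 + \sum_j E^j_{V,0}$ and $J_0 = T_{0,V}\, J_V + \sum_j E^j_{0,V}$, where $T_{V,0}$ and $T_{0,V}$ are nontrivial bounded operators built from the partial distorted Fourier transforms (the reflected, transmitted, and incident pieces enter with different signs). Both directions are needed --- the first to pull $J_V$ through the nonlinearity as $J_0$, the second to rewrite the resulting $J_0 u$ inside $|u|^2 J_0 u$ back in terms of $J_V u$ and close the bootstrap --- and the error terms come in three flavors (bounded, weighted-$L^2$ with $t$-growth, and Strichartz-type), not a single localized remainder.

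Second, the paper does \emph{not} use distorted wave packets. It works with the flat packets $\Psi_v(x,t) = e^{-ix^2/(4t)}\chi\bigl((x-vt)/\sqrt t\,\bigr)$ and treats the extra term $iV u\,\overline{\Psi_v}$ in the ODE for $\alpha$ as a perturbation; this is legitimate because the wave packet analysis is restricted to $|v| \gtrsim t^{-1/2}$, i.e.\ $|x| \gtrsim t^{1/2}$, where $V$ is small. The complementary region $|x| \lesssim t^{1/2}$ is handled directly by a separate improved local decay lemma giving $|u(x,t)| \lesssim t^{-3/4+\delta}$ there, so no wave packet information is needed near the origin and no stationary-phase reconstruction step appears. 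Your distorted-packet route could presumably be made to work, but the paper's choice avoids having to analyze $e^{itH}$-propagated bumps.

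One small correction: for $\gamma > 2$ the $L^p$ boundedness of the M\o{}ller wave operators \emph{is} available and the paper does use it (to control the error piece $E^3$); the point is that this tool would fail below $\gamma = 2$, not that it is already unavailable here.
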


Before sketching the argument used to prove~\Cref{thm:main-theorem}, let us make a few remarks on the result and possible generalizations:

\begin{rmk}[Decay rate of the error]
    We can be more explicit about the decay rate of the error term in~\eqref{eqn:u-asympt}: Examining the proof, we find that 
    $$t^{1/2}|R(x,t)| \lesssim \epsilon t^{-1/4+\delta} + \epsilon^3 t^{1-\gamma/2}$$
    and 
    $$\lVert R(x,t) \rVert_{L^2} \lesssim \epsilon t^{-1/2 + \delta} + \epsilon^3 t^{3/4 - \gamma/2}$$
    Here, the terms which are cubic in $\epsilon$ come from approximating the asymptotic profile $\alpha(v,t)$ (see~\eqref{eqn:alpha-eqn}) by $\tilde{u}(-v/2,t)$ in the phase of the modified scattering term.  It is interesting to note that the time decay rate for these phase error terms becomes dominant at exactly the point where the results~\cite{chen1dimensionalNonlinearSchrodinger2022,chen1dCubicNLS2024} derived using the stationary phase method cease to apply.
\end{rmk}

\begin{rmk}[Generic potentials]
    Our argument does not distinguish between generic and exceptional potentials.  Since generic potentials have better low-energy behavior, it is likely that the decay conditions can be lowered further in the generic case.  We plan to investigate this in future work.
\end{rmk}

\begin{rmk}[Assumptions on the nonlinearity]
    Although we will focus on the constant coefficient cubic nonlinearity $|u|^2u$ for simplicity, our methods apply equally well to nonlinearities of the form
    \begin{equation*}
        a(x)|u|^2 u + (W * |u|^2)u
    \end{equation*}
    where $a(x) - 1$ and its derivative decays sufficiently rapidly as $|x| \to \infty$ and $W \in L^1_2$.
\end{rmk}

\subsection{Sketch of the argument}

At a very high level, our argument has two steps:
\begin{enumerate}[(1)]
    \item Assuming that the decay assumption~\eqref{eqn:u-decay} holds, show that $\lVert J_V u \rVert_{L^2} \lesssim \epsilon \jBra{t}^{\delta}$ for some small $\delta > 0$, where $J_V = e^{itH}\DFT^* (i\partial_k) \DFT e^{-itH}$ and $\DFT$ is the distorted Fourier transform (see~\Cref{sec:dist-four-trans}).
    \item Assuming that $\lVert J_V u \rVert_{L^2}$ grows slowly in time, derive an asymptotic equivalent for $u$ that is compatible with~\eqref{eqn:u-decay} and~\eqref{eqn:u-asympt}.
\end{enumerate}
By a standard bootstrap argument, these two ingredients are enough to prove~\Cref{thm:main-theorem}.  We now explain how we obtain the slow growth of the weighted norm and the asymptotic equivalent for $u$.

\subsubsection{Slow growth of the weighted norm}

To obtain slow growth of the weighted norm, we must control the integrals
\begin{equation*}
    \lVert J_V u(t) - e^{i(t-1)H}J_V u(1) \rVert_{L^2} = \left\lVert \int_1^t e^{i(t-s)H} J_V(|u|^2u(s))\;ds \right\rVert_{L^2}
\end{equation*}
Let us briefly review how this is done for the flat NLS
\begin{equation}\label{eqn:flat-NLS}
    i\partial_t u - \Delta u = \pm |u|^2 u
\end{equation}
since our goal is ultimately to emulate this approach as closely as possible.  The weight for the flat Schr\"odinger equation is given by
\begin{equation*}
    J_0 = e^{-it\Delta} \cF^* (i\partial_\xi) \cF e^{it\Delta} = x - 2it\partial_x
\end{equation*}
and an explicit calculation shows that
\begin{equation}\label{eqn:flat-NLS-cubic-ident}
    J_0 (|u|^2 u) = 2|u|^2 J_0 u - u^2 \overline{J_0 u}
\end{equation}
see~\cite{katoNewProofLongrange2011,ifrimGlobalBoundsCubic2015}.  Thus, if we make the bootstrap assumption that $u$ decays like $\epsilon t^{-1/2}$ pointwise, we find that
\begin{equation*}\begin{split}
    \lVert J_0 u(x,t) - e^{-i(t-1)\Delta} J_0 u(x,1) \rVert_{L^2} =& \left\lVert\int_1^t e^{-i(t-s)\Delta} 2|u|^2 J_0 u(s) - u^2 \overline{J_0 u}(s) \;ds\right\rVert_{L^2}\\
    \lesssim& \int_1^t \frac{\epsilon^2}{s} \lVert J_0 u \rVert_{L^2}\;ds
\end{split}\end{equation*}
which shows that $\lVert J_0 u(t) \rVert_{L^2}$ grows no faster than $t^{C\epsilon^2}$.  

As explained earlier, the main difficulty for~\eqref{eqn:main-eqn} is to isolate the resonance structure in the presence of the potential.  To resolve this difficulty, we prove that $J_V$ can be written as
\begin{equation*}
    J_V = T_{V,0} J_0 + E^1_{V,0}(t) + E^2_{V,0}(t) + E^3_{V,0}(t)
\end{equation*}
where $E^j_{V,0}(t)$ are error terms and $T_{V,0}$ is bounded on $L^p$ for $1 < p \leq 2$.  Roughly speaking, $E^1_{V,0}(t)$ has no growth in time and so can be controlled by using the $t^{-1/2}$ pointwise decay of $u$; $\lVert E^2_{V,0}(t)w \rVert_{L^2} \lesssim t\lVert \jBra{x}^{3/2-2\gamma+}w \rVert_{L^2}$, which allows us to use the improved local decay bounds
\begin{equation*}
    \lVert \jBra{x}^{-1} u \rVert_{L^\infty} \lesssim t^{-3/4} \lVert J_v u \rVert_{L^2}
\end{equation*}
to counterbalance the extra growth in time; and $\lVert E^3_{V,0} \rVert_{L^{1+}_x} \lesssim \lVert t \jBra{x}^{-\gamma} w \rVert_{L^{1+}}$, which can be controlled by combining local decay with Strichartz estimate
\begin{equation*}
    \left\lVert \int_1^t e^{i(t-s)H} F(s,x) \;ds \right\rVert_{L^2} \lesssim \lVert F(s,x) \rVert_{L^{4/3+}_s(1,t; L^{1+}_x)}
\end{equation*}
Thus, we find that
\begin{equation*}
    \lVert J_V u(t) - e^{i(t-1)H}J_V u(1) \rVert_{L^2} \leq \left\lVert \int_1^t e^{i(t-s)H} T_{V,0} J_0(|u|^2u(s))\;ds \right\rVert_{L^2} + \{\text{better terms}\}
\end{equation*}
Moreover, we can take advantage of the resonance structure~\eqref{eqn:flat-NLS-cubic-ident} of the flat cubic NLS
to write
\begin{equation*}
    \lVert J_V u(t) - e^{i(t-1)H} J_V u(1) \rVert_{L^2} \lesssim \left\lVert \int_1^t e^{i(t-s)H} T_{V,0} \left(2|u|^2 J_0 u - u^2 \overline{J_0 u}\right)\;ds \right\rVert_{L^2} + \{\text{better}\}
\end{equation*}
To close the estimates, we prove that $J_0$ can be expressed in terms of $J_V$, up to errors:
\begin{equation*}
    J_0 = T_{0,V} J_V + E^1_{0,V}(t) + E^2_{0,V}(t) + E^3_{0,V}(t)
\end{equation*}
The errors $E^j_{0,V}$ have similar behavior to the $E^j_{V,0}$'s ($E^1_{0,V}$ and $E^3_{0,V}$ are in fact dual to $E^1_{V,0}$ and $E^3_{V,0}$, respectively), so the terms $|u|^2 E^j_{0,V} u$ can again be controlled using a combination of $t^{-1/2}$ pointwise decay, improved local decay, and Strichartz estimates.  Thus, the leading-order behavior of $J_Vu$ for~\eqref{eqn:main-eqn} is the same as that for $J_0 u$ in the flat NLS:
\begin{equation}
    \lVert J_V u(t) - e^{i(t-1)H}J_V u(1) \rVert_{L^2} \lesssim \left\lVert \int_1^t e^{i(t-s)H} T_{V,0} \left(2|u|^2 T_{0,V}J_V u - u^2 \overline{T_{0,V}J_V u}\right)\;ds \right\rVert_{L^2} + \{\text{better}\}
\end{equation}
which is enough to prove that $\lVert J_V u \rVert_{L^2}$ can grow only slowly in time.  

\subsubsection{Asymptotic equivalent for \texorpdfstring{$u$}{u}}

With the slow growth of $\lVert J_V u \rVert_{L^2}$ in hand, we must now derive an asymptotic equivalent for $u$.  To do this, we will use the method of testing with wave packets introduced by Ifrim and Tataru in~\cite{ifrimGlobalBoundsCubic2015}.  This method has been used to find modified scattering asymptotics in a number of settings, including water waves~\cite{ifrimTwoDimensionalWater2016,aiTwodimensionalGravityWaves2022,ifrimLifespanSmallData2017a}, the mKdV equation~\cite{harrop-griffithsLongTimeBehavior2016}, the SQG front equation~\cite{aiWellposednessSurfaceQuasigeostrophic2024,aiLowRegularityWellposedness2023}, and the derivative NLS equation~\cite{byarsGlobalDynamicsSmall2024}, among others.  However, to the best of the author's knowledge, it has not previously been used for problems involving potentials.

Intuitively, the idea of testing with wave packets is that the components of the solution of the Schr\"odinger equation localized to energy $\approx |k|^2$ should travel with velocity $\approx -2k$.  Dispersion then leads to a broadening of the wave packet over time, so it has width $\sqrt{t}$ at time $t$.  Since this is a semiclassical approximation, it can only be expected to hold in the classically allowed region where $|k|^2 + V(x) > 0$.  If we make the approximation that $x \approx -2kt$ and that $V(x)$ decays like $|x|^{-2}$ (which is true, at least in an averaged sense, for $V \in L^1_2$), then this amounts to restricting to $|k| \gtrsim t^{-1/2}$, or equivalently $|x| \gtrsim t^{1/2}$, so we cannot expect wave packets to give us meaningful information close to the origin.  Fortunately, for $|x| \lesssim t^{1/2}$, the hypothesis~\eqref{eqn:zero-freq-hypo} actually gives us the improved decay estimate
\begin{equation*}
    |u(x,t)| \lesssim t^{-3/4} \lVert J_V u \rVert_{L^2},\qquad |x| \lesssim t^{1/2}
\end{equation*}
(see~\Cref{lem:improved-low-x-bdds}), which is compatible with~\eqref{eqn:u-decay} and~\eqref{eqn:u-asympt}.  Thus, it is enough to obtain asymptotics in the classically allowed region $|x| \gtrsim t^{1/2}$, where we can expect the wave packets to accurately approximate the solution.

For the flat Schr\"odinger equation, the wave packets are given by
\begin{equation}\label{eqn:wave-packet-def-intro}
    \Psi_v(x,t) = e^{-i\frac{x^2}{4t}} \chi\left(\frac{x - vt}{\sqrt{t}}\right)
\end{equation}
In particular, for $k = -v/2$, $\Psi_v$ behaves approximately like a solution to the free Schr\"odinger equation.  
One might wonder if we need to modify this definition 
to accommodate the potential.  In fact, since we will always work in the region $|x| \gtrsim t^{1/2}$, we can treat the potential perturbatively, allowing us to work with the flat wave packets~\eqref{eqn:wave-packet-def-intro}.  This is related to the fact that the dynamics of $e^{itH}$ approach the free dynamics $e^{-it\Delta}$ in an appropriate sense as $t \to \infty$.  Using the wave packets, we define
\begin{equation}\label{eqn:alpha-def-intro}
    \alpha(v,t) = \int u(x,t) \overline{\Psi_v}(x,t)\;dx
\end{equation}
From~\cite{ifrimGlobalBoundsCubic2015}, we have that $u(x,t) \approx t^{-1/2}e^{i\frac{x^2}{4t}}\alpha(x/t,t)$ provided that $J_0 u$ grows slowly in $L^2_x$, see~\cite[Lemma 2.2]{ifrimGlobalBoundsCubic2015}.  Since $J_V$ and $J_0$ are comparable up to localized errors and we are working in the region $|x| \gtrsim t^{1/2}$, we can effectively replace $J_0$ with $J_V$ (up to lower order errors), which shows that $u(x,t)$ is also well approximated by $t^{-1/2}e^{i\frac{x^2}{4t}}\alpha(x/t,t)$ in our problem.  Thus, the problem of finding asymptotics for $u$ reduces to proving asymptotics for $\alpha$.  By differentiating~\eqref{eqn:alpha-def-intro} and using~\eqref{eqn:main-eqn}, we find that for $|v| \gtrsim t^{-1/2}$
\begin{equation*}
    \partial_t \alpha(v,t) = \mp \frac{i}{t} |\alpha(v,t)| \alpha(v,t) + \{\text{better terms}\}
\end{equation*}
which can be integrated to show that $|\alpha(v,t)| = O(\epsilon)$ and that the long-time asymptotics of $\alpha$ involve a logarithmic phase rotation:
\begin{equation*}
    \alpha(v,t) \approx \exp\left(\int_1^t |\alpha(v,s)|^2 \frac{ds}{s}\right) A(v,t)
\end{equation*}
for some $A$. Recalling the relationship between $\alpha$ and $u$ immediately gives us the decay~\eqref{eqn:u-decay}, and the asymptotics~\eqref{eqn:u-asympt} follow similarly once we note that $|\alpha(v,t)| \approx |\tilde{u}(-v/2,t)|$.

\subsection{Organization of the paper}

The plan for the rest of the paper is as follows: In~\Cref{sec:prelims}, we define some notation, state results on the spectral theory of $H$, and give decay and Strichartz estimates for $e^{itH}$.  Next, we outline the bootstrap argument in~\Cref{sec:bootstrap}.  The weighted estimates we need for the bootstrap are given in~\Cref{sec:weight-ests}, and we derive an asymptotic equivalent for $u$ in~\Cref{sec:asympt}.  Finally, in~\Cref{sec:main-thm-proof} we show how these results combine to prove~\Cref{thm:main-theorem}.

\subsection{Acknowledgements}
The author would like to thank Gong Chen, Tristan L\'eger, and Fabio Pusateri for helpful discussions.

\section{Preliminaries}\label{sec:prelims}

\subsection{Notation and preliminaries}

We write $A \lesssim B$ to mean that $A \leq CB$ for some implicit constant $C$.  If the implicit constant depends on some parameter $P$, we will write $A \lesssim_P B$.  We will also use the Japanese bracket notation
\begin{equation*}
    \jBra{x} := \sqrt{1 + x^2}
\end{equation*}

We denote the (flat) Fourier transform by
\begin{equation*}
    \cF f(\xi) = \hat{f}(\xi) = \frac{1}{\sqrt{2\pi}} \int e^{-ix\xi} f(x)\;dx
\end{equation*}
As is well-known, $\cF$ is unitary on $L^2$, with inverse
\begin{equation*}
    \cF^{-1} g(x) = \cF^* g(x) = \check{g}(x) = \frac{1}{\sqrt{2\pi}} \int e^{ix\xi} g(\xi)\;dx
\end{equation*}
For a function $m: \bbR \to \bbR$, we define the Fourier multiplier $m(D)$ by
\begin{equation*}
    m(D) f := \left[\cF^* m(\xi) \cF f\right] (x)
\end{equation*}
By Plancherel's theorem, $m(D)$ defines a bounded operator on $L^2$ whenever $m \in L^\infty$.  More generally, we have that
\begin{equation*}
    m(D) f (x) =  \sqrt{2\pi} \check{m} * f(x)
\end{equation*}
so if $\check{m} \in L^1$, then $m(D): L^p \to L^p$ for any $p \in [1,\infty]$.

Since we will make frequent use of Hardy's inequality, we record it here for reference:
\begin{lemma}[Hardy's inequality]
    Let $f: \bbR \to \bbR$ be a function with $f(x_0) = 0$.  Then,
    \begin{equation*}
        \left\lVert \frac{f(x)}{x - x_0} \right\rVert_{L^2} \lesssim \lVert \partial_x f \rVert_{L^2} 
    \end{equation*}
\end{lemma}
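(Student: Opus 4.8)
The plan is to reduce to the classical one-sided Hardy inequality by translation and then to argue on each half-line separately. First I would set $x_0 = 0$ without loss of generality, replacing $f$ by $f(\cdot + x_0)$; this changes neither side of the inequality. It then suffices to prove $\int_0^\infty f(x)^2/x^2\,dx \leq 4\int_0^\infty f'(x)^2\,dx$, since the analogous bound on $(-\infty,0)$ follows from the reflection $x \mapsto -x$, and adding the two pieces gives $\lVert f/x \rVert_{L^2(\bbR)} \leq 2 \lVert f' \rVert_{L^2(\bbR)}$.

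On $(0,\infty)$ the argument is the standard integration-by-parts trick: writing $x^{-2} = \frac{d}{dx}(-x^{-1})$ and integrating by parts,
\begin{equation*}
    \int_0^\infty \frac{f(x)^2}{x^2}\,dx = \left[-\frac{f(x)^2}{x}\right]_0^\infty + \int_0^\infty \frac{2 f(x) f'(x)}{x}\,dx,
\end{equation*}
and then Cauchy--Schwarz gives $\int_0^\infty f^2/x^2 \leq 2\left(\int_0^\infty f^2/x^2\right)^{1/2}\left(\int_0^\infty (f')^2\right)^{1/2}$, from which the claim follows by dividing through by the first factor (which we may assume finite and nonzero, the degenerate cases being trivial).

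The one point that needs care --- and essentially the only obstacle --- is justifying the integration by parts, i.e.\ showing the boundary terms vanish. At $x = 0$ we use $f(x_0) = 0$: for $f$ absolutely continuous, $|f(x)| = \left|\int_0^x f'\right| \leq x^{1/2}\left(\int_0^x (f')^2\right)^{1/2}$, so $f(x)^2/x \to 0$ as $x \to 0^+$. At $x = \infty$ the boundary term is controlled once one knows $f$ is bounded. To make this fully rigorous I would first prove the inequality for $f \in C_c^\infty(\bbR)$ with $f(0) = 0$, where every integral is finite and both boundary terms vanish identically, and then extend to general $f$ with $f' \in L^2$ and $f(x_0) = 0$ by a truncation-and-smoothing density argument: approximate $f$ by $\chi(x/R)f(x)$ convolved with a mollifier, apply the inequality to the approximants, and pass to the limit using monotone convergence on the left-hand side and dominated convergence on the right. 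Alternatively, one can run the integration by parts on $(\varepsilon, R)$, apply Cauchy--Schwarz there, and then send $\varepsilon \to 0$ and $R \to \infty$, invoking Fatou's lemma to recover the full integral on the left.
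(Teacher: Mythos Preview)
Your argument is the standard proof of the one-dimensional Hardy inequality and is correct; the reduction by translation, the split into half-lines, the integration-by-parts computation, and the handling of boundary terms via density or truncation are all sound.

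There is nothing to compare against: the paper does not prove this lemma. It is stated ``for reference'' as a classical inequality and used as a black box throughout (for instance in the proof of \Cref{lem:abs-nonst-ph} and in the weighted estimates). So your proposal supplies a proof where the paper simply cites the result.
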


\subsection{Spectral theory for \texorpdfstring{$-\Delta + V$}{-Laplacian + V}}\label{sec:spectral-theory}

\subsubsection{Jost functions and the scattering matrix}

Here, we recall the basic spectral theory for $-\Delta +V$, and give bounds which will be used throughout the work.  For $V \in L^1_1$, the equation
\begin{equation}\label{eqn:H-eigen-prob}
\left\{\begin{array}{l}
    (-\Delta + V)f_{\pm}(x,k) = k^2 f_{\pm}(x,k)\\
    \lim_{x \to \pm \infty} f_{\pm}(x,k) = e^{\pm ixk}
    \end{array}\right.
\end{equation}
has a solution $f_{\pm}(x,k)$, known as the Jost function.  It is often useful to remove the effect of the oscillations and work with the functions
\begin{equation}\label{eqn:m-pm-def}
    m_{\pm}(x,k) = e^{\mp ixk} f_{\pm}(x,k)
\end{equation}
Then, $m_{\pm}$ solves the Volterra integral equation
\begin{equation}\label{eqn:m-pm-volterra-int}
    m_+(x,k) = 1 \pm \int_{I_\pm(x)} D_k(\pm(y-x))V(y)m_{\pm}(y,k)\;dy
\end{equation}
where $I_+(x) = (x,\infty)$, $I_-(x) = (-\infty,x)$ and
\begin{equation}\label{eqn:Dirichlet-kernel}
    D_k(x) = \int_0^x e^{2ikz}\;dz = \frac{e^{2ikx}-1}{2ik}
\end{equation}
Using the Jost functions, we can define generic and exceptional potentials as follows:
\begin{defn}\label{def:generic-def}
    A potential $V$ is called \textit{generic} if $\int V(x) m_\pm(x,0)\;dx \neq 0$, and \textit{exceptional} otherwise.
\end{defn}
A potential is generic iff $m_\pm(x,0) = f_\pm(x,0)$ is unbounded, which implies that $-\Delta + V$ has no zero energy resonances.

The functions $m_\pm-1$ and their derivatives decay at a rate determined by the spatial decay of the potential.  Defining
\begin{equation*}
    \mathcal{W}_\pm^a(x) = \int_{I_\pm(x)} \jBra{y}^a|V(y)|;dy
\end{equation*} 
we have that
\begin{lemma}\label{lem:m-pm-basic-decay}
    Let $V \in L^1_\gamma$ with $\gamma \geq 1$.  For $0 \leq \theta \leq 1$ and $0 \leq n \leq \gamma-1$, we have that
    \begin{equation}\label{eqn:d-k-m-pm-basic-bdds}\begin{split}
        \left|\partial_k^n \left(m_{\pm}(x,k) - 1\right)\right| \lesssim \frac{1}{|k|^{1-\theta}\jBra{k}^\theta}\begin{cases}
            \mathcal{W}_\pm^{n+\theta}(x) & \pm x \geq -1\\
            \jBra{x}^{n+\theta} & \pm x \leq 1
        \end{cases}
    \end{split}
    \end{equation}
    Moreover, for the $x$ derivative, we have that
    \begin{equation}\label{eqn:dx-d-k-m-pm-bdd}
       |\partial_x \partial_k^n m_\pm(x,k)| \lesssim \begin{cases}
            \mathcal{W}_\pm^n(x) & \pm x \ge -1\\
            \jBra{x}^n & \pm x \leq 1
        \end{cases}
    \end{equation}
\end{lemma}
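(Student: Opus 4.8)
The plan is to prove \Cref{lem:m-pm-basic-decay} by a contraction-mapping / iteration argument applied directly to the Volterra integral equation \eqref{eqn:m-pm-volterra-int}, differentiated in $k$ and $x$ as needed. I will treat the $+$ case; the $-$ case is identical after reflection $x \mapsto -x$. First I would record the elementary pointwise bounds on the Dirichlet kernel $D_k(x) = \frac{e^{2ikx}-1}{2ik}$ and its $k$-derivatives: for $x \geq 0$ one has $|D_k(x)| \lesssim \min(|x|, |k|^{-1}) \lesssim \frac{|x|^\theta}{|k|^{1-\theta}\jBra{k}^{\theta-1} \cdot \jBra{k}^{?}}$ — more precisely the interpolation bound $|D_k(x)| \lesssim |x|^\theta |k|^{\theta-1}$ for $0 \le \theta \le 1$, together with the extra decay $|D_k(x)| \lesssim |k|^{-1}\jBra{k}^{-1}\cdot\jBra{k}$ coming from $|e^{2ikx}-1|\le 2$ — and analogously $|\partial_k^j D_k(x)| \lesssim |x|^{j+\theta}|k|^{\theta-1}$, using that differentiating in $k$ either brings down a factor of $x$ (from the exponential) or lowers the power of $k$ in the denominator. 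The crucial structural point is that $D_k(\pm(y-x))$ for $y \in I_\pm(x)$ always has $\pm(y-x) \geq 0$, so the kernel bounds apply with a genuinely nonnegative argument.

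\medskip

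Next I would set up the iteration. Writing $m_+ = \sum_{j\ge 0} g_j$ with $g_0 = 1$ and
\begin{equation*}
    g_{j+1}(x,k) = \int_x^\infty D_k(y-x) V(y) g_j(y,k)\;dy,
\end{equation*}
the $\theta = 0$, $n = 0$ case is the classical estimate: using $|D_k(y-x)| \lesssim |k|^{-1}\jBra{k}^{-1}\cdot\jBra{k} \lesssim |k|^{-1}$ one gets $|g_{j+1}| \le \|V\|_{L^1}^j / j! \cdot |k|^{-1}\mathcal{W}_+^0(x)$ when $x \ge -1$, summing to the bound with $\mathcal{W}_+^0$; for $x \le 1$ one instead uses $|D_k(y-x)|\lesssim |y-x| \lesssim \jBra{y}$ when $|y-x|\lesssim 1$ and $|D_k(y-x)|\lesssim|k|^{-1}$ otherwise, but for the stated inequality it is cleanest to bound $|D_k(y-x)|\lesssim |k|^{-(1-\theta)}\jBra{k}^{-\theta}\jBra{y-x}^\theta$ and absorb $\jBra{y-x}^\theta \lesssim \jBra{x}^\theta\jBra{y}^\theta$ to produce $\jBra{x}^{\theta}$ outside and $\mathcal{W}_+^{\theta}(x)\lesssim \|V\|_{L^1_\gamma}$ from the integral (here $\theta \le 1 \le \gamma$). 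Interpolating the $|D_k|\lesssim|k|^{-1}$ bound against $|D_k|\lesssim |y-x|$ gives exactly the factor $|k|^{-(1-\theta)}\jBra{k}^{-\theta}$ after also using $|e^{2ikx}-1|\le 2$ to gain the $\jBra{k}^{-1}$ improvement at high frequency; I would be careful to track that the high-frequency gain is what turns $|k|^{-1}$ into $|k|^{-(1-\theta)}\jBra{k}^{-\theta}$ rather than $|k|^{-1}\jBra{k}^{-\theta}$, which is the content of writing $\min(|x|,|k|^{-1}) \le |x|^\theta |k|^{-(1-\theta)}$ and separately $\min(|x|,|k|^{-1})\le 2|k|^{-1}\jBra{k}^{-1}\cdot\tfrac12\jBra{k}$... — in practice the clean statement is $|D_k(x)|\lesssim \frac{|x|^\theta}{|k|^{1-\theta}\jBra{k}^{\theta}}\cdot\jBra{k}^{\theta}$, hmm — I will simply verify $|D_k(x)|\lesssim \min(|x|,|k|^{-1})$ and $|D_k(x)|\le |x|$ and interpolate, keeping the $\jBra{k}$ bookkeeping explicit in the write-up.

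\medskip

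For the $k$-derivatives, I would differentiate the recursion: $\partial_k^n g_{j+1}$ is a sum over Leibniz splittings of $\int (\partial_k^{n_1}D_k)(y-x) V(y) (\partial_k^{n_2} g_j)(y,k)\,dy$ with $n_1 + n_2 = n$. By induction on $j$, feeding in $|\partial_k^{n_2} g_j(y,k)| \lesssim |k|^{-(1-\theta)}\jBra{k}^{-\theta}\mathcal{W}_+^{n_2+\theta}(y)^{\text{-ish}}$ and the kernel bound $|\partial_k^{n_1}D_k(y-x)| \lesssim |y-x|^{n_1+\theta'}|k|^{-(1-\theta')}\jBra{k}^{-\theta'}$, and using $\mathcal{W}_+^a(y) \le \mathcal{W}_+^a(x)$ for $y \ge x$ together with $\jBra{y-x}\lesssim\jBra{y}$ (when $x\ge-1$) or $\lesssim\jBra{x}\jBra{y}$ (when $x\le1$), one distributes the $n$ derivatives and $\theta$ among the factors to land on $\mathcal{W}_+^{n+\theta}(x)$ (resp. $\jBra{x}^{n+\theta}$); the factorial gain $\frac{1}{j!}\|V\|_{L^1_\gamma}^j$ survives because each step costs only one $L^1_\gamma$-norm of $V$, so the series converges. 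The condition $n \le \gamma - 1$ enters exactly to guarantee $\mathcal{W}_+^{n+\theta}(x) \le \mathcal{W}_+^\gamma(x) \le \|V\|_{L^1_\gamma} < \infty$ when we need to sum (with $\theta \le 1$). Finally, \eqref{eqn:dx-d-k-m-pm-bdd} follows by differentiating the integral equation in $x$: the boundary term $-D_k(0)V(x)\cdots = 0$ since $D_k(0)=0$, leaving $\partial_x m_+ = -\int_x^\infty \partial_x[D_k(y-x)] V(y) m_+(y,k)\,dy$ with $\partial_x D_k(y-x) = -e^{2ik(y-x)}$ bounded by $1$, so the singular $|k|^{-1}$ factor disappears entirely; differentiating this further in $k$ and reusing the bounds on $\partial_k^n m_+$ gives \eqref{eqn:dx-d-k-m-pm-bdd}.

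\medskip

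The main obstacle is purely the bookkeeping in the induction: getting the interpolation parameter $\theta$ and the low/high frequency split of $D_k$ to interact correctly with the Leibniz expansion so that the \emph{same} $\theta$ propagates through all $j$ iterates and all $n_1+n_2 = n$ splittings, while preserving the $1/j!$ decay needed for summability. There is no conceptual difficulty — it is the standard Jost-function iteration of Deift–Trubowitz type refined to keep track of $\theta$ — but the indices must be handled with care, in particular the claim $\jBra{y-x}^\theta\lesssim \jBra{y}^\theta$ for $y\ge x\ge -1$ and the monotonicity $\mathcal{W}_+^a(y)\le \mathcal{W}_+^a(x)$, which are what let the $x$-dependence be pulled out of the iterated integrals.
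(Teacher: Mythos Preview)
Your iteration approach is the classical Deift--Trubowitz route and is correct in principle, but it is not what the paper does, and your sketch contains a bookkeeping hazard worth flagging.

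The paper's proof is considerably shorter: it takes the case $\theta = 1$ (and \eqref{eqn:dx-d-k-m-pm-bdd}) as already known from the standard references, and then obtains the $\theta \in [0,1)$ bound by a \emph{single} pass through the Volterra equation~\eqref{eqn:m-pm-volterra-int}, using the interpolated kernel bound $|D_k(y-x)| \lesssim |y-x|^{\theta}|k|^{-(1-\theta)}$ together with the uniform boundedness $|m_+(y,k)| \lesssim 1$ that the $\theta=1$ case already supplies. For the $k$-derivatives it does the same thing: distribute $\partial_k^n$ across the Volterra integrand, bound $\partial_k^a D_k$ pointwise, and invoke the already-established bounds on $\partial_k^{n-a} m_+$. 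No iteration, no factorials; just one integration against $|V|$.

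Your plan to run the full Neumann series and track $\theta$ through every iterate works, but only if you are careful about where the factor $|k|^{-(1-\theta)}\jBra{k}^{-\theta}$ enters. As written, your sketch applies a $\theta'$-weighted kernel bound $|\partial_k^{n_1} D_k(y-x)| \lesssim |y-x|^{n_1+\theta'}|k|^{-(1-\theta')}$ at the inductive step while \emph{also} carrying $|k|^{-(1-\theta)}$ in the hypothesis on $g_j$; doing both produces $|k|^{-(1-\theta)-(1-\theta')}$, and after $j$ steps the powers of $|k|^{-1}$ pile up and the series diverges for small $|k|$. (The same issue appears in your $n=0$ line ``using $|D_k|\lesssim|k|^{-1}$ one gets $|g_{j+1}|\le \lVert V\rVert_{L^1}^j/j!\cdot |k|^{-1}\mathcal{W}_+^0(x)$'': applying $|D_k|\le|k|^{-1}$ at every step gives $|k|^{-j}$, not $|k|^{-1}$.) The fix is to use the $\theta$-bound on $D_k$ exactly \emph{once}---at the base case $g_1$---and at every subsequent step use the $k$-free bound $|\partial_k^{n_1} D_k(y-x)| \lesssim |y-x|^{n_1+1} \lesssim \jBra{y}^{n_1+1}$; then the single $|k|^{-(1-\theta)}\jBra{k}^{-\theta}$ propagates unchanged, and the remaining integrals of $\jBra{y}^{n_1+1}|V(y)|$ against $\mathcal{W}_+^1(y)^{j-1}/(j-1)!$ give the usual $\mathcal{W}_+^1(x)^{j}/j!$ factorial gain. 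Once you see this, you will notice it is exactly the paper's argument rewritten at the level of iterates rather than at the level of $m_+$ itself---which is why the paper's version is shorter.
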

\begin{proof}
    The proofs for~\eqref{eqn:dx-d-k-m-pm-bdd} and~\eqref{eqn:d-k-m-pm-basic-bdds} with $\theta = 1$ are standard and can be found in~\cite{deiftInverseScatteringLine1979b,wederLpLpEstimates2000,delortModifiedScatteringOdd2016}.  A form  of~\eqref{eqn:d-k-m-pm-basic-bdds} for $\theta \in [0,1)$ appears in~\cite[Lemma 2.1]{wederLpLpEstimates2000} with weaker requirements on the potential but without an explicit decay rate in $x$.  Since this improvement does not appear to be as well known, we give a proof assuming the estimate for $\theta = 1$.  We begin with the case $n = 0$.  Using the Volterra equation~\eqref{eqn:m-pm-volterra-int} and focusing on the bound for $m_+$ for simplicity, we see that
    \begin{equation*}
        m_+(x,k) - 1 = \int_x^\infty D_k(y-x) V(y) m_+(y,k)\;dy
    \end{equation*}
    Now, by~\eqref{eqn:Dirichlet-kernel}, we see that $|D_k(y-x)| \lesssim |k|^{1-\theta}|y-x|^{\theta}$, while~\eqref{eqn:d-k-m-pm-basic-bdds} with $\theta = 1$ implies that $m_+(y,k)$ is bounded uniformly in $y$ and $k$ for $y \geq x \geq -1$.  Thus, for $|k| \leq 1$ it follows that
    \begin{equation*}
        |m_+(x,k) - 1| \lesssim \frac{1}{k^\theta} \int_x^\infty |y|^\theta |V(y)|\;dy \lesssim \frac{\mathcal{W}_+^\theta(x)}{|k|^\theta \jBra{k}^{1-\theta}} 
    \end{equation*}
    while for $|k| > 1$, we use $|D_k(y-x)| \lesssim |k|^{-1}$ to find that
    \begin{equation*}
        |m_+(x,k) - 1| \lesssim \frac{1}{k} \int_x^\infty |V(y)|\;dy \lesssim \frac{\mathcal{W}_+^0(x)}{|k|} \lesssim \frac{\mathcal{W}_+^\theta(x)}{|k|^\theta \jBra{k}^{1-\theta}} 
    \end{equation*}
    For $n\geq 1$, we distribute the derivatives to find that
    \begin{equation*}
        \partial_k^n(m_+(x,k) - 1) = \sum_{n=0}^s \int_x^\infty \partial_{k}^n D_k(y-x) V(y) \partial_k^{s-n} m_+(y,k)\;dy
    \end{equation*}
    and use the bound $|\partial_k^{s-n}m_+(y,k)| \lesssim_V 1$ from the $\theta = 1$ case together with the estimate 
    $$\partial_k^a |D_k(z)| \lesssim \min\left(\frac{\jBra{z}^a}{|k|}, \jBra{z}^{a+1}\right)$$
    to obtain the desired bound.
\end{proof}
In addition to the bounds~\eqref{eqn:dx-d-k-m-pm-bdd}, it will be useful later to have the identity
\begin{equation}\begin{split}\label{eqn:dx-m-ident}
    \partial_x m_+(x,k) =& \int_x^\infty e^{2ik(x-y)} V(y) m_+(y,k)\;dy\\
                        =& \int_x^\infty e^{2ik(x-y)} V(y) (m_+(y,k) - 1)\;dy\\
                        &+ \int_x^\infty e^{2ik(x-y)} V(y) \;dy
\end{split}\end{equation}
which arises naturally in the proof of~\eqref{eqn:dx-d-k-m-pm-bdd}.  The form~\eqref{eqn:dx-m-ident} will prove useful for us later, since each of the individual terms have better properties than suggested by the bound~\eqref{eqn:dx-d-k-m-pm-bdd}:
\begin{enumerate}
    \item The first term contains both $V(y)$ and $m_+(y,k) - 1$, and thus can be shown to decay like $\frac{\jBra{x}^{2\gamma - \theta}}{|k|^{1-\theta} \jBra{k}^\theta}$ using~\eqref{eqn:d-k-m-pm-basic-bdds}.
    \item The second term only depends on $k$ through the complex exponential, which allows us to re-express $x$ integrals in terms of the Fourier transform.
\end{enumerate}
We note that a similar identity holds for $m_-$:
\begin{equation}\label{eqn:dx-m-minus-ident}\begin{split}
    \partial_x m_-(x,k) =& \int_{-\infty}^x e^{2ik(y-x)} V(y) (m_-(y,k) - 1)\;dy\\
                        &+ \int_{-\infty}^x e^{2ik(y-x)} V(y) \;dy
\end{split}\end{equation}

Returning to the eigenvalue problem~\eqref{eqn:H-eigen-prob}, we see that for $k$ nonzero $\{f_\pm(x,k), f_\pm(x,-k)\}$ forms a basis for the solutions of~\eqref{eqn:H-eigen-prob}.  In particular, this implies that we can express $f_\pm(x,k)$ as a linear combination of $f_\mp(x,k)$ and $f_\mp(x,-k)$.  This allows us to define the reflection and transmission coefficients, which for $k \neq 0$ satisfy the equations
\begin{equation}\label{eqn:T-R-def-1}
    f_+(x,k) = \frac{R_-(k)}{T(k)} f_-(x,k) + \frac{1}{T(k)} f_-(x,-k)
\end{equation}
\begin{equation}\label{eqn:T-R-def-2}
    f_-(x,k) = \frac{R_+(k)}{T(k)} f_+(x,k) + \frac{1}{T(k)} f_+(x,-k)
\end{equation}
Intuitively, these equations can be understood as follows: Multiplying through by $T(k)$ and rearranging, we see that 
\begin{equation*}
    f_\pm(x,k) = T(-k) f_\mp(x,k) + R_\pm(-k) f_\pm(x,-k)
\end{equation*}
If we interpret the left-hand side as representing a wave $e^{\pm ikx}$ coming toward the potential from $\pm \infty$ (the incident wave), then the potential reflects a wave $R_\pm(-k) e^{\mp ixk}$ back toward $\pm \infty$ (the reflected wave) and allows a wave $T(-k) e^{\pm ikx}$ to travel to $\mp \infty$ (the transmitted wave).
\begin{rmk}\label{rmk:phys-interp-caveat}
    Strictly speaking, the interpretation above is only correct when $\mp k > 0$, since otherwise the dispersive relation for the equation suggests that the waves will be moving in the opposite directions (that is, the `incident' wave will be moving outward, while the `transmitted' and `reflected' waves will be moving inward).  In particular, it \textit{will not} be valid under our convention for the distorted Fourier transform.
\end{rmk}
Recall that the Wronskian for~\eqref{eqn:H-eigen-prob} is
\begin{equation}\label{eqn:wronskian-def}
    W(f,g) = \partial_x f(x) g(x) - f(x) \partial_x g(x)
\end{equation}
and that it is independent of $x$ for $f$ and $g$ solving~\eqref{eqn:H-eigen-prob}.  By taking Wronskians of various combinations of $f_\pm(x,\pm k)$, we obtain the explicit formulas
\begin{equation}\label{eqn:T-R-exp-formula}\begin{split}
    \frac{1}{T(k)} =& \frac{1}{2ik}W(f_+(x,k), f_-(x,k))
    = 1 - \frac{1}{2ik}\int V(x)m_{\pm}(x,k)\;dx\\
    \frac{R_\pm(k)}{T(k)} =& \frac{1}{2ik}W(f_-(x,\pm k), f_+(x,\mp k))
    = \frac{1}{2ik}\int e^{\mp 2ikx}V(x)m_{\mp}(x,k)\;dx
\end{split}\end{equation}
together with the relations
\begin{equation}\begin{split}
    T(-k) = \overline{T(k)},&\qquad R_\pm(-k) = \overline{R_\pm(k)}\\
    |R_\pm(k)|^2 + |T(k)|^2 = 1, &\qquad T(k)\overline{R_-(k)} + \overline{T(K)} R_+(k) = 0
\end{split}
\end{equation}
In particular, by the third relation, $|T(k)|, |R_\pm(k)| \leq 1$.  Note that by~\Cref{def:generic-def}, $T(0) = 0$ and $R_\pm(0) = -1$ for generic potentials.

A result of~\cite{egorovaDispersionEstimatesOnedimensional2016} implies that $T(D)$ and $R_\pm(D)$ are bounded on Lebesgue spaces:
\begin{lemma}\label{lem:TR-pdo-bd}
    For $V \in L^1_1$, the operators $T(D)$ and $R_\pm(D)$ are bounded on $L^p$ for any $1 \leq p \leq \infty$.
\end{lemma}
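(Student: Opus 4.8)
The plan is to reduce the statement to a claim purely about the scattering coefficients as Fourier multipliers and then invoke the cited result of~\cite{egorovaDispersionEstimatesOnedimensional2016}; I will indicate the structure of the argument behind that result. Recall that if $m\in L^\infty$ has $\check m\in L^1$, then $m(D)$ acts by convolution with $\sqrt{2\pi}\,\check m$ and is therefore bounded on $L^p$ for every $p\in[1,\infty]$. Since $T(k)\to 1$ and $R_\pm(k)\to 0$ as $|k|\to\infty$, and constants correspond to bounded multiples of the identity, it suffices to show that $T(k)-1$ and $R_\pm(k)$ lie in the Wiener algebra $\cF L^1$ of Fourier transforms of $L^1$ functions. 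I will use freely that $\cF L^1$ is a Banach algebra (closed under products and convolutions) and an ideal in $\cF L^1+\bbC$, that $1/g$ agrees near a compact set $K$ with an element of $\cF L^1+\bbC$ whenever $g$ does and $g$ is nonvanishing on $K$ (local Wiener--L\'evy), and that a function $g$ belongs to $\cF L^1$ once $g\in L^2$ (so $\check g$ is locally $L^1$) and $g''\in L^1$ (so $\check g=O(|x|^{-2})$ at infinity).

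First I would extract the algebraic structure from~\eqref{eqn:T-R-exp-formula}. Iterating the Volterra equation~\eqref{eqn:m-pm-volterra-int} and using $D_k(z)=\int_0^z e^{2ik\tau}\,d\tau$ to present each Born kernel as a partial Fourier transform, one checks --- this is classical one-dimensional inverse scattering, see e.g.~\cite{deiftInverseScatteringLine1979b} --- that
\[
    G_0(k):=\int V(x)\,m_\pm(x,k)\,dx,\qquad G_\pm(k):=\int e^{\mp 2ikx}V(x)\,m_\mp(x,k)\,dx
\]
both lie in $\cF L^1+\bbC$, the simplex gain $1/n!$ in the $n$-th Born term making the series converge under $V\in L^1_1$. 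By~\eqref{eqn:T-R-exp-formula} we then have $T(k)^{-1}=1-G_0(k)/(2ik)$ and $R_\pm(k)/T(k)=G_\pm(k)/(2ik)$, hence
\[
    T(k)-1=\frac{G_0(k)}{2ik-G_0(k)},\qquad R_\pm(k)=\frac{G_\pm(k)}{2ik-G_0(k)},
\]
with a common denominator $D(k):=2ik-G_0(k)=2ik/T(k)$ that is continuous, behaves like $2ik$ at infinity, is nonzero for $k\neq 0$, and has $D(0)=-G_0(0)$.

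Finally I would split into high and low frequencies with smooth cutoffs. On $\{|k|\ge R\}$ for $R$ large, writing a smooth truncation of $G_0(k)/(2ik)$ in the form $R^{-1}g(k/R)\,G_0(k)$ with $g(u)=\zeta(u)/(2iu)\in\cF L^1$ (since $g\in L^2$ and $g''\in L^1$), the rescaling shows that the first factor has $\cF L^1$-norm $O(R^{-1})$, so $T^{-1}=1-G_0/(2ik)$ agrees near infinity with an invertible element of $\cF L^1+\bbC$; a Neumann series then puts the high-frequency parts of $T-1=G_0/D$ and $R_\pm=T\,G_\pm/(2ik)$ into $\cF L^1$. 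On the remaining compact set $\{|k|\le 2R\}$ everything hinges on $D$ near the origin. If $V$ is generic, $G_0(0)=\int V m_\pm(\cdot,0)\,dx\neq 0$ by~\Cref{def:generic-def}, so $D$ is nonvanishing on this set; local Wiener--L\'evy then gives an element of $\cF L^1+\bbC$ equal to $1/D$ there, and multiplying by the localized numerators $G_0,G_\pm$ closes the argument. If $V$ is exceptional, $D(0)=0$, but the finiteness of $T(0)$ and $R_\pm(0)$ forces $G_0(0)=G_\pm(0)=0$ as well, and one must reconcile the common zero of numerator and denominator at $k=0$ using the fine low-energy structure of the Jost solutions (a refinement of the bounds in~\Cref{lem:m-pm-basic-decay}). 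This exceptional low-frequency analysis is the genuine obstacle; it is precisely the technical heart of~\cite{egorovaDispersionEstimatesOnedimensional2016}, whose conclusion we invoke to finish.
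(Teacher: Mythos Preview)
Your proposal is correct and in fact more detailed than the paper's own treatment: the paper gives no proof at all, simply citing~\cite{egorovaDispersionEstimatesOnedimensional2016}. Your sketch of the Wiener-algebra argument (reduce to $T-1,R_\pm\in\cF L^1$, high-frequency Neumann series for $1/(1-G_0/(2ik))$, low-frequency Wiener--L\'evy in the generic case, and deferral to the cited reference for the exceptional low-energy analysis) is an accurate outline of how that result is obtained, and ultimately rests on the same citation.
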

We will also need the following derivative bounds:
\begin{lemma}\label{lem:TR-deriv-bdds}
        Let $V \in L^1_2$.  Then,
        \begin{equation}\label{eqn:dTR-bounds}
            |T'(k)|, |R_{\pm}'(k)| \lesssim \jBra{k}^{-1} 
        \end{equation}
\end{lemma}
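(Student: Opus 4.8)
The plan is to differentiate the explicit formulas in \eqref{eqn:T-R-exp-formula} and estimate term by term, using the decay bounds from \Cref{lem:m-pm-basic-decay} together with the assumption $V \in L^1_2$. Recall that $\frac{1}{T(k)} = 1 - \frac{1}{2ik}\int V(x) m_\pm(x,k)\,dx$; writing $m_\pm = 1 + (m_\pm - 1)$ and using $\int V(x)(m_\pm(x,0) - 1)\,dx$ to cancel the $\int V$ contribution in the exceptional case (in the generic case the analysis is different near $k=0$ but one still gets $1/T(k) = O(1/|k|)$, hence $T(k) \to 0$), one sees that $\frac{1}{T(k)}$ is bounded away from $0$ for $|k|$ large and behaves like a Lipschitz function near $k = 0$. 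The key is that differentiating in $k$ produces either a $\partial_k m_\pm$ — controlled by \eqref{eqn:d-k-m-pm-basic-bdds} with $n=1$, which requires $\gamma \geq 2$, hence the hypothesis $V \in L^1_2$ — or an extra factor of $x$ hitting the exponential $e^{\mp 2ikx}$ in the $R_\pm/T$ formula, which costs one power of $\jBra{x}$ and is again absorbed by $V \in L^1_2$. The decay $\jBra{k}^{-1}$ on the right-hand side of \eqref{eqn:dTR-bounds} comes from the $\frac{1}{|k|^{1-\theta}\jBra{k}^\theta}$ factor in \eqref{eqn:d-k-m-pm-basic-bdds} (taking $\theta$ close to $1$ near $k=0$ and using the large-$k$ decay for $|k| \gtrsim 1$), combined with the overall $\frac{1}{k}$ prefactor in \eqref{eqn:T-R-exp-formula} after the leading singular term has been extracted.

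Concretely, I would proceed as follows. First, write $g(k) := \frac{1}{T(k)}$ and $h_\pm(k) := \frac{R_\pm(k)}{T(k)}$ from \eqref{eqn:T-R-exp-formula}, and establish that $g$ and $h_\pm$ are $C^1$ with $|g'(k)|, |h_\pm'(k)| \lesssim \jBra{k}^{-1}$; this is the main computation and uses \eqref{eqn:d-k-m-pm-basic-bdds} and \eqref{eqn:dx-m-ident} (to handle the $\partial_x$-type structure when integrating by parts in the $e^{\mp 2ikx}$ oscillation, trading oscillation for the $V$-weight). Second, recover $T = 1/g$ and $R_\pm = h_\pm/g$; since $|T|, |R_\pm| \leq 1$ and $|g| \gtrsim 1$ for $|k| \geq 1$, we get $|T'| = |g'|/|g|^2 \lesssim |g'| \lesssim \jBra{k}^{-1}$ and similarly $|R_\pm'| \leq |h_\pm'|/|g| + |h_\pm||g'|/|g|^2 \lesssim \jBra{k}^{-1}$ in that regime. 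For $|k| \leq 1$ one must be slightly more careful: in the exceptional case $g(k) \to g(0) \neq 0$ and the argument goes through directly; in the generic case $g$ blows up like $1/|k|$, so $T = 1/g$ vanishes linearly and one checks directly from the series/Volterra representation that $T'$ and $R_\pm'$ extend continuously to $k=0$ and stay bounded, which on the compact set $|k|\le 1$ is all that $\jBra{k}^{-1} \asymp 1$ demands.

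The main obstacle I anticipate is the behavior at $k = 0$, particularly reconciling the generic and exceptional cases uniformly: the formulas in \eqref{eqn:T-R-exp-formula} have an explicit $\frac{1}{k}$ singularity, and one needs the cancellation $\int V(x) m_\pm(x,k)\,dx \to \int V(x) m_\pm(x,0)\,dx$ (with the difference being $O(|k|^{1-})$ or better, by \eqref{eqn:d-k-m-pm-basic-bdds}) to see that the singularity is at worst of the claimed order and that the derivative does not worsen it beyond $\jBra{k}^{-1}$. The oscillatory integral $\int e^{\mp 2ikx} V(x) m_\mp(x,k)\,dx$ in the formula for $R_\pm/T$ needs the identity \eqref{eqn:dx-m-ident}/\eqref{eqn:dx-m-minus-ident} so that integration by parts in $x$ (to exploit the oscillation, or simply to see the $k \to 0$ limit cleanly) does not lose decay; this is exactly the point where $\gamma > 2$ — rather than merely $\gamma = 2$ — would give room to spare, but $V \in L^1_2$ suffices because we only need the endpoint bound $\jBra{k}^{-1}$ and not an improvement. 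Away from $k=0$ everything is routine differentiation under the integral sign justified by the decay of $V$ and the bounds of \Cref{lem:m-pm-basic-decay}.
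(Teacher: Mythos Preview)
Your high-$k$ argument is essentially the paper's: for $|k|>1$ the paper differentiates the representation formulas \eqref{eqn:T-R-exp-formula} directly (writing $T'=T^2\,\partial_k(1/T)$, which is your $-g'/g^2$) and uses \Cref{lem:m-pm-basic-decay} with $n=1$ together with $V\in L^1_2$ to bound the resulting integrals.

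The real divergence is at low frequency. The paper does \emph{not} attempt a direct computation for $|k|\le 1$; it invokes \cite[Theorem~2.2]{egorovaZeroEnergyScattering2015a}, which asserts that for $V\in L^1_2$ the functions $T'$ and $R_\pm'$ lie in the Wiener algebra, hence are bounded. That single citation disposes of the small-$k$ regime uniformly in the generic and exceptional cases.

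Your direct low-$k$ argument has a gap, and you have the two cases reversed. First, the intermediate claim $|g'(k)|\lesssim\jBra{k}^{-1}$ is simply false in the generic case: since $g(k)=1-\tfrac{1}{2ik}I(k)$ with $I(0)\neq 0$, one has $g'(k)\sim I(0)/(2ik^2)$ as $k\to 0$. The generic case is in fact the \emph{easy} one if you work with $T$ rather than $g$: writing $T(k)=\tfrac{2ik}{2ik-I(k)}$ gives $T'(k)=\tfrac{2i(kI'(k)-I(k))}{(2ik-I(k))^2}$, a ratio of continuous functions whose denominator tends to $I(0)^2\neq 0$, so $T'$ is bounded near $0$ with only $I\in C^1$ (which \Cref{lem:m-pm-basic-decay} gives). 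The exceptional case, which you call straightforward, is the delicate one: there $I(0)=0$, both numerator and denominator vanish to second order, and boundedness of $T'(0)$ reduces to boundedness of $\tfrac{d}{dk}\bigl[I(k)/k\bigr]$ at $k=0$, which requires $I'$ to be Lipschitz (equivalently $I''$ bounded). That in turn needs control of $\partial_k^2 m_\pm$, which \Cref{lem:m-pm-basic-decay} does \emph{not} supply when $\gamma=2$ (the lemma only gives $n\le\gamma-1=1$). This second-order low-energy information is precisely what the Egorova--Teschl result encodes; you must either cite it or reproduce a substantial low-energy analysis going beyond \Cref{lem:m-pm-basic-decay}.
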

\begin{proof}
    First, we recall that by~\cite[Theorem 2.2]{egorovaZeroEnergyScattering2015a} $T'$ and $R_\pm'$ belong to the Weiner algebra of functions having an integrable Fourier transform.  It follows that $T',R_\pm'$ are bounded, which gives~\eqref{eqn:dTR-bounds} for $|k| \leq 1$.  To handle the case $k > 1$, we work with the representation formulas~\eqref{eqn:T-R-exp-formula}.  Differentiating the integral representation for $T$, we find that
    \begin{equation*}
        T'(k) = [T(k)]^2 \int_{-\infty}^\infty V(x) \partial_k \left(\frac{m_+(x,k)}{2ik}\right)\;dx
    \end{equation*}
    Now, by~\Cref{lem:m-pm-basic-decay}, we have that $|m_+(x,k)|, |\partial_k m_+(x,k)| \lesssim \frac{\jBra{x}^2}{\jBra{k}}$, so
    \begin{equation*}
        |T'(k)| \lesssim \frac{1}{k^2}\int_{-\infty}^\infty \jBra{x}^2 |V(x)|\;dx
    \end{equation*}
    which is better than required.  Similarly, 
    \begin{equation*}
        R'_\pm(k) = \partial_k \left(T(k) \int_{-\infty}^\infty e^{\mp 2ikx} V(x) m_\mp(x,k)\;dx\right)
    \end{equation*}
    and a similar argument shows that
    \begin{equation*}
        |R'_\pm(k)| \lesssim \frac{1}{k} \int_{-\infty}^\infty \jBra{x}^2 |V(x)|\;dx
    \end{equation*}
\end{proof}

\subsubsection{The distorted Fourier transform}\label{sec:dist-four-trans}

With the Jost solutions in hand, we are now in a position to define the distorted Fourier transform.  Let
\begin{equation}\label{eqn:psi-def}
    \psi(x,k) = \frac{1}{\sqrt{2\pi}}\begin{cases}
        T(k) f_+(x,k) & k \geq 0\\
        T(-k) f_-(x,-k) & k < 0
    \end{cases}
\end{equation}
Then, the distorted Fourier transform of a function $f$ is given by
\begin{equation}\label{eqn:DFT-def}
    \DFT f(k) = \tilde{f}(k) := \int \overline{\psi(x,k)} f(x)\;dx
\end{equation}
Note that for $V = 0$, $\psi(x,k) = (2\pi)^{-1/2} e^{ixk}$ and we recover the flat Fourier transform.  Many of the well-known properties of the flat Fourier transform carry over (with minor adjustments) to the distorted setting.  The following lemma summarizes the properties that will be most useful for us:
\begin{lemma}\label{lem:DFT-basic-prop}
    \begin{enumerate}[(i)]
        \item $\DFT$ is unitary from $L^2_x$ to $L^2_k$, with inverse given by
        \begin{equation*}
            \DFT^{-1} g = \DFT^* g = \int \psi(x,k) g(k)\;dk
        \end{equation*}
        \item If $V$ is generic, then $\tilde{f}(0) = 0$ for $f \in L^1$.
        \item If $V$ is exceptional and $\psi_+(x,0)$ is even (resp. odd), then $\tilde{f}(0) = 0$ for $f$ odd (resp. even).  In particular, $\tilde{f}(0) = 0$ if $V$ is even and $f$ is odd.
        \item For $f \in L^1$, $\tilde{f}(k)$ is continuous for all $k \neq 0$.  In addition, if $\tilde{f}(0) = 0$, then $\tilde{f}(k)$ is continuous at $k = 0$.
        \item $\lVert \partial_k \tilde{f}(k) \rVert_{L^2_k} \lesssim \lVert \jBra{x} f \rVert_{L^2}$
    \end{enumerate}
\end{lemma}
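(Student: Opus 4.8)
Parts (i)--(iv) are the standard consequences of the stationary scattering theory for $-\Delta+V$ with $V\in L^1_1$ and no eigenvalues, and I would dispatch them after recording the two facts about $\psi$ that we need. First, $\psi$ is uniformly bounded, $\sup_{x,k}|\psi(x,k)|\lesssim1$: this follows from \Cref{lem:m-pm-basic-decay}, by bounding $\psi=(2\pi)^{-1/2}T(k)f_+(x,k)$ on $\{x\ge-1\}$ (where $|m_+(x,k)|\lesssim1$) and rewriting, via \eqref{eqn:T-R-def-1}, $\psi=(2\pi)^{-1/2}\bigl(R_-(k)f_-(x,k)+f_-(x,-k)\bigr)$ on $\{x\le1\}$ (where $|R_-(k)|\le1$ and $|m_-(x,\pm k)|\lesssim1$). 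Second, $k\mapsto\psi(x,k)$ is continuous, with $\psi(x,0)=(2\pi)^{-1/2}T(0)f_+(x,0)$, which vanishes for generic $V$ since $T(0)=0$. Given these, (i) is the spectral theorem for $H$ under \eqref{eqn:no-bd-states-assumption} (Deift--Trubowitz, Weder, Yafaev), the adjoint formula being read off from \eqref{eqn:DFT-def}; (ii) is immediate from $\psi(x,0)=0$ and \eqref{eqn:DFT-def}; (iv) follows from the uniform bound on $\psi$ and dominated convergence; and (iii) is a parity argument, using that for exceptional $V$ the Jost solutions $f_+(\cdot,0)$ and $f_-(\cdot,0)$ are linearly dependent (the Wronskian $W(f_+,f_-)=2ik/T(k)$ vanishes at $k=0$ since $T(0)\ne0$), so that $\psi(x,0)$ is a scalar multiple of $\psi_+(x,0):=f_+(x,0)$, which for even $V$ may be normalized to have a definite parity; pairing against an $f$ of the opposite parity annihilates the integral.

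The substantive part is (v). For $k>0$, differentiating \eqref{eqn:psi-def} with $f_+(x,k)=e^{ixk}m_+(x,k)$ gives
\begin{equation*}
    \partial_k\overline{\psi(x,k)}=-ix\,\overline{\psi(x,k)}+\tfrac{1}{\sqrt{2\pi}}\,\overline{T'(k)}\,e^{-ixk}\,\overline{m_+(x,k)}+\tfrac{1}{\sqrt{2\pi}}\,\overline{T(k)}\,e^{-ixk}\,\overline{\partial_k m_+(x,k)},
\end{equation*}
and similarly for $k<0$ with $m_-$ in place of $m_+$. Pairing against $f$ and using part (i),
\begin{equation*}
    \partial_k\tilde f(k)=\DFT(-ixf)(k)+\mathcal{R}f(k),\qquad \lVert\DFT(-ixf)\rVert_{L^2_k}=\lVert xf\rVert_{L^2_x}\le\lVert\jBra{x}f\rVert_{L^2},
\end{equation*}
where $\mathcal{R}f(k)=\int\overline{\rho(x,k)}f(x)\,dx$ and $\rho$ is the sum of the last two terms above; it thus remains to prove $\lVert\mathcal{R}f\rVert_{L^2_k}\lesssim\lVert\jBra{x}f\rVert_{L^2}$. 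Splitting $m_\pm=1+(m_\pm-1)$ (the $\partial_k m_\pm$ term has no constant part), the constant contributions are of the form $c(k)\hat f(\pm k)$ with $\lVert c\rVert_{L^\infty}\lesssim1$ by \Cref{lem:TR-deriv-bdds}, hence bounded on $L^2_k$; what is left involves $m_\pm-1$ and $\partial_k m_\pm$ paired against the oscillation $e^{\mp ixk}$.

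For those surviving pieces I would insert the Volterra equation \eqref{eqn:m-pm-volterra-int} for $m_\pm-1$ (and the Volterra equation with the same kernel satisfied by $\partial_k m_\pm$), and exploit the elementary identity that, for $y$ on the relevant side of $x$, the phase $e^{\mp ixk}$ times the Dirichlet kernel $D_k(\pm(y-x))$ of \eqref{eqn:Dirichlet-kernel} equals, up to a constant, $\int e^{\mp i\xi k}\,d\xi$ over an interval of length $\sim|y-x|$; this is exactly what turns every $x$-integral in $\mathcal{R}$ into a Fourier integral in $k$. The resulting amplitudes are integrated against $V(y)$ with a weight $\lesssim\jBra{y}^2$, so since $\jBra{y}^\gamma V\in L^1$ with $\gamma>2$ the amplitudes are integrable, and Plancherel in $k$ together with \Cref{lem:m-pm-basic-decay} and \Cref{lem:TR-deriv-bdds} yields $\lVert\mathcal{R}f\rVert_{L^2_k}\lesssim\lVert f\rVert_{L^2}$. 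This can also be packaged operator-theoretically: $\DFT\cF^{-1}=I+K$ is unitary on $L^2$, $K$ has the regular kernel $(2\pi)^{-1}\overline{T(k)}\int e^{ix(\xi-k)}\bigl(\overline{m_+(x,k)}-1\bigr)\,dx$ for $k>0$ (with the $R_\pm$ analogue for $k<0$), one shows $\partial_k K$ is bounded on $L^2$ by the same Volterra/Plancherel argument, and then $\partial_k\tilde f=\partial_k(\DFT\cF^{-1})\hat f$ gives (v) at once.

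I expect the main obstacle to be precisely the remainder term carrying a $k$-derivative of $m_\pm$ together with the stray factor of $x$ produced when $\partial_k$ lands on the oscillation in $f_\pm$. The crude pointwise bound \eqref{eqn:d-k-m-pm-basic-bdds} with $n=1$ only gives $|\partial_k m_\pm(x,k)|\lesssim\jBra{k}^{-1}\mathcal{W}_\pm^2(x)$ in the favorable spatial region, and $\jBra{k}^{-1}\notin L^2_k$ while $\mathcal{W}_\pm^2$ is not integrable after this factor, so a bare Schur test loses a logarithm (or a power of the weight); one genuinely has to use the oscillation $e^{\mp ixk}$ hidden in the Volterra kernel. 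The hypothesis $\gamma>2$ enters exactly here: it is what makes $\jBra{y}^2V(y)\in L^1$, the integrability needed to control $x(m_\pm-1)$ and $\partial_k m_\pm$ once the $x$-integrals have been rewritten as Fourier integrals.
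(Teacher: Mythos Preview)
The paper does not give a proof here; it simply refers to \cite{germainNonlinearSchrodingerEquation2018,agmonSpectralPropertiesSchrodinger1975,dunfordLinearOperators1988,yafaevMathematicalScatteringTheory2010}. Your dispatch of (i)--(iv) is in line with what one finds in those references and is fine.

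For (v) there is a genuine gap. For $k>0$ you differentiate using only the representation $\psi=(2\pi)^{-1/2}T(k)e^{ixk}m_+(x,k)$, so your remainder $\mathcal{R}f$ involves $m_+(x,k)-1$ and $\partial_k m_+(x,k)$ for \emph{all} $x\in\bbR$. But the bounds of \Cref{lem:m-pm-basic-decay} are only favorable on $\{x\ge -1\}$; on $\{x\le 1\}$ they give growth $\jBra{x}^{n+\theta}$, and neither a Schur/$L^2_{x,k}$ kernel bound nor your Volterra--Plancherel rewriting closes there (after your $\xi$-substitution one is left with the primitive $F(\xi)=\int_{-\infty}^{\xi}f$, which is only $O(|\xi|^{-1/2})$ as $\xi\to-\infty$ and hence is not in $L^2$). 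The same issue undermines your operator-theoretic packaging: the kernel $\int e^{ix(\xi-k)}(\overline{m_+(x,k)}-1)\,dx$ you write for $K$ is not absolutely convergent since $m_+-1$ does not decay as $x\to-\infty$, and incidentally the diagonal part of $\DFT\cF^{-1}$ on $\{k>0\}$ is $\overline{T(k)}$, not $I$.

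The fix is to localize in $x$ \emph{before} differentiating: insert $1=\chi_+(x)+\chi_-(x)$ and, on $\supp\chi_-$, use \eqref{eqn:T-R-def-1} to rewrite $T(k)f_+(x,k)=R_-(k)f_-(x,k)+f_-(x,-k)$, so that every Jost factor is of $m_-$ type where it has good decay. This is exactly the partial-DFT decomposition \eqref{eqn:partial-DFT-defs} used throughout the paper; compare the treatment of $\rmI_{0,V,T}$ in the proof of \Cref{thm:J0-to-JV}, which is precisely the $\chi_+$ half of your remainder. After localization a direct Schur bound with $|\chi_\pm(x)\partial_k m_\pm(x,k)|\lesssim \mathcal{W}_\pm^{3/2+}(x)\,|k|^{-1/2+}\jBra{k}^{-1/2-}\in L^2_{x,k}$ (valid for $\gamma>2$) suffices, and the Volterra trick is unnecessary.
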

Proofs for each of these facts can be found in~\cite{germainNonlinearSchrodingerEquation2018,agmonSpectralPropertiesSchrodinger1975,dunfordLinearOperators1988,yafaevMathematicalScatteringTheory2010}.  It will often be useful in our proof to write the distorted Fourier transform in terms of incident, reflected, and transmitted waves (see the discussion after~\Cref{eqn:T-R-def-1,eqn:T-R-def-2}).  To this end, we define the \textit{partial distorted Fourier transforms} as
\begin{equation}\label{eqn:partial-DFT-defs}\begin{split}
    \DFT_{+,T} \phi(k) =& \bbOne_{k > 0} \frac{\overline{T(k)}}{\sqrt{2\pi}} \int \chi_+(x) \overline{f_+(x,k)} \phi(x)\;dx\\
    \DFT_{-,T} \phi(k) =& \bbOne_{k < 0} \frac{\overline{T(-k)}}{\sqrt{2\pi}} \int \chi_-(x) \overline{f_-(x,-k)} \phi(x)\;dx\\
    \DFT_{+,I} \phi(k) =& \bbOne_{k > 0} \frac{1}{\sqrt{2\pi}}\int \chi_-(x) \overline{f_-(x,-k)} \phi(x)\;dx\\
    \DFT_{-,I} \phi(k) =& \bbOne_{k < 0}  \frac{1}{\sqrt{2\pi}}\int \chi_+(x) \overline{f_+(x,k)} \phi(x)\;dx\\
    \DFT_{+,R} \phi(k) =& \bbOne_{k > 0} \frac{\overline{R_-(k)}}{\sqrt{2\pi}} \int \chi_-(x) \overline{f_-(x,k)} \phi(x)\;dx\\
    \DFT_{-,R} \phi(k) =& \bbOne_{k < 0} \frac{\overline{R_+(-k)}}{\sqrt{2\pi}} \int \chi_+(x) \overline{f_+(x,-k)} \phi(x)\;dx
\end{split}\end{equation}
where $\chi_+(x) + \chi_-(x) = 1$ and $\chi_{\pm}(x)$ is a smooth function supported in $\pm x > - 1$.  We will sometimes write $\DFT_T = \DFT_{+,T} + \DFT_{-,T}$, and similarly for $\DFT_R,\DFT_I$.  Although the partial DFTs are not unitary, they are bounded on $L^2$:
\begin{lemma}\label{lem:pDFT-bddness}
    For $\gamma > 1$, the partial DFT operators defined in~\eqref{eqn:partial-DFT-defs} are bounded on $L^2$.  Moreover, if $\gamma > 2$, then for $1 < p < 2$ and $A \in \{T,R,I\}$, $\cF^* \DFT_{\pm,A} : L^p \to L^p$.
\end{lemma}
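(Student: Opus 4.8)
The plan is to peel the flat Fourier transform off each partial distorted transform and control the remainder with the Jost-function bounds of \Cref{lem:m-pm-basic-decay}. I will carry this out for $\DFT_{+,T}$; the other five operators are identical after relabelling, using $m_-$ and \eqref{eqn:dx-m-minus-ident} in place of $m_+$ and noting that the prefactors $\bbOne_{\pm k>0}$ and $\overline{T(\pm k)}$, $\overline{R_\mp(\pm k)}$, or $1$ are all bounded by $1$. Writing $f_+(x,k)=e^{ixk}m_+(x,k)$ and $m_+=1+(m_+-1)$,
$$\DFT_{+,T}\phi(k)=\bbOne_{k>0}\overline{T(k)}\bigl(\cF(\chi_+\phi)(k)+E\phi(k)\bigr),\qquad E\phi(k)=\frac{1}{\sqrt{2\pi}}\int e^{-ixk}\chi_+(x)\,\overline{(m_+(x,k)-1)}\,\phi(x)\,dx.$$
The first summand contributes a Fourier multiplier (with symbol $\bbOne_{k>0}\overline{T(k)}$) precomposed with multiplication by $\chi_+\in L^\infty$, so in both parts of the lemma the real work is in the remainder $E\phi$.

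\emph{$L^2$ boundedness, $\gamma>1$.} Since $\lvert\bbOne_{k>0}\overline{T(k)}\rvert\le1$ and $\cF$ is unitary, it suffices to show $\lVert E\phi\rVert_{L^2}\lesssim\lVert\phi\rVert_{L^2}$. Pick $\theta\in(\tfrac12,1)$ with $\gamma>\theta+\tfrac12$, which is possible because $\gamma>1$. By \Cref{lem:m-pm-basic-decay} with $n=0$ we have $\lvert m_+(x,k)-1\rvert\lesssim\lvert k\rvert^{\theta-1}\jBra{k}^{-\theta}\mathcal{W}_+^\theta(x)$ on $\supp\chi_+=\{x\ge-1\}$, so Cauchy--Schwarz in $x$ gives $\lvert E\phi(k)\rvert\lesssim\lvert k\rvert^{\theta-1}\jBra{k}^{-\theta}\lVert\mathcal{W}_+^\theta\rVert_{L^2(x\ge-1)}\lVert\phi\rVert_{L^2}$. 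Now $\mathcal{W}_+^\theta(x)\lesssim\jBra{x}^{\theta-\gamma}$ for $x\ge1$, so $\mathcal{W}_+^\theta\in L^2(x\ge-1)$ precisely because $\gamma>\theta+\tfrac12$, while $\lvert k\rvert^{2\theta-2}\jBra{k}^{-2\theta}\in L^1_k$ because $\theta>\tfrac12$; combining the two facts gives the bound. (Alternatively one may insert the Volterra equation \eqref{eqn:m-pm-volterra-int} and change variables to exhibit $E\phi$ as $\cF$ of a spatial integral operator with kernel $\lesssim\mathcal{W}_+^0$ and close with the Schur test; this also needs only $\gamma>1$.)

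\emph{$L^p$ boundedness, $1<p<2$, $\gamma>2$.} Write $\cF^*\DFT_{+,T}\phi=M(\chi_+\phi)+\cF^*\bigl[\bbOne_{k>0}\overline{T(k)}\,E\phi\bigr]$, where $M=\cF^*\,\bbOne_{k>0}\,\overline{T(k)}\,\cF$. Since $\overline{T(D)}=PT(D)P$ with $P$ the parity operator (so $\overline{T(D)}$ is bounded on $L^p$ by \Cref{lem:TR-pdo-bd}) and $\bbOne_{k>0}$ is the Riesz projection, $M$ is bounded on $L^p$ for $1<p<\infty$, and precomposing with $\chi_+$-multiplication preserves this. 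The remainder is $\int K(y,x)\chi_+(x)\phi(x)\,dx$ with $K(y,x)=\tfrac1{\sqrt{2\pi}}\int_0^\infty e^{i(y-x)k}\,\overline{T(k)(m_+(x,k)-1)}\,dk$. Splitting this $k$-integral dyadically and integrating by parts once on each block --- using $\lvert\partial_k m_+(x,k)\rvert\lesssim\jBra{k}^{-1}\mathcal{W}_+^2(x)$ from \Cref{lem:m-pm-basic-decay} (the case $n=1$, which requires $\gamma\ge2$) and $\lvert T'(k)\rvert\lesssim\jBra{k}^{-1}$ from \Cref{lem:TR-deriv-bdds} (which requires $V\in L^1_2$) --- produces a pointwise bound on $K(y,x)$ that decays in $\lvert y-x\rvert$ and, through $\mathcal{W}_+^2(x)$, in $x$; the latter decay is available precisely because $\gamma>2$. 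That spatial decay is enough to run a Schur/interpolation argument against the $L^2$ bound and obtain $L^p$-boundedness for $1<p<2$.

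\emph{Main obstacle.} The serious step is the $L^p$ estimate for the remainder kernel $K$: it is essentially a piece of the proof that the M\o{}ller wave operators are bounded on $L^p$, and the threshold $\gamma>2$ is forced there, since one needs the $\jBra{x}^2$-weighted $L^1$ control of $V$ to bound $\partial_k m_\pm$ and $T',R_\pm'$ with enough decay in $k$ to sum the dyadic pieces while retaining spatial decay. The $L^2$ argument only ever sees the weights $\mathcal{W}_\pm^\theta$ and goes through with $\gamma>1$.
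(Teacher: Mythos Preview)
Your $L^2$ argument is correct and is precisely the paper's: both of you bound the kernel $\chi_+(x)\overline{(m_+(x,k)-1)}$ pointwise by $|k|^{\theta-1}\jBra{k}^{-\theta}\mathcal{W}_+^\theta(x)$ and observe this is in $L^2_{x,k}$ for $\tfrac12<\theta<\gamma-\tfrac12$.

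The $L^p$ part, however, has a genuine gap. On a high-frequency block $|k|\sim 2^j$ (with smooth cutoff $\psi_j$), a single integration by parts produces the term $\psi_j\,\overline{T(k)\,\partial_k m_+(x,k)}$, and with the bound you cite, $|T\,\partial_k m_+|\lesssim 2^{-j}\mathcal{W}_+^2(x)$; integrated over the block (measure $\sim 2^j$) this gives $\mathcal{W}_+^2(x)/|y-x|$ with \emph{no} decay in $j$, so the dyadic sum diverges. A second IBP would cure this but requires $\partial_k^2 m_+$, hence $\gamma\ge 3$. Even granting a bound like $|K(y,x)|\lesssim \mathcal{W}_+^2(x)\jBra{y-x}^{-1}$, the kernel is not in $L^1_y$ (indeed the jump of $\bbOne_{k>0}$ at $k=0$ forces a genuine $1/(y-x)$ tail whenever $T(0)(m_+(x,0)-1)\ne 0$, as happens for exceptional potentials), so a Schur argument cannot close, and interpolating an $L^1\to L^r$ bound against $L^2\to L^2$ only yields $L^p\to L^q$ with $q>p$.

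The paper sidesteps both issues by factoring differently: it writes $\cF^*\DFT_{+,T}=\bbOne_{D>0}\overline{T(D)}\bigl(\chi_+ + \cF^*A\bigr)$ where $A\phi(k)=\int \overline{(m_+(y,k)-1)}e^{-iyk}\chi_+(y)\phi(y)\,dy$ is defined for \emph{all} $k\in\bbR$. The multiplier $\bbOne_{D>0}\overline{T(D)}$ is bounded on $L^p$ for $1<p<\infty$ by \Cref{lem:TR-pdo-bd} and the Riesz projection. For the remainder one checks directly that $A,\,\partial_kA:L^1_x\to L^2_k$ (this is where $\gamma>2$ enters, through $|y\,(m_+-1)|,\,|\partial_k m_+|\lesssim\jBra{k}^{-1}\jBra{y}^{2-\gamma}$), so $\cF^*A:L^1\to\jBra{x}^{-1}L^2\subset L^1$; interpolating with $\cF^*A:L^2\to L^2$ gives $L^p\to L^p$ for $1\le p\le 2$. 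The key point you are missing is to pull the Hilbert-transform-type singularity $\bbOne_{k>0}$ out as a multiplier \emph{before} analyzing the kernel, so that the remaining amplitude is smooth across $k=0$.
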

\begin{proof}
    We will give the argument for $\DFT_{+,T}$, since the arguments for the other operators are similar.  Note that we can write
    \begin{equation*}
        \DFT_{+,T} \phi (k) = \bbOne_{k > 0}\overline{T(k)}\hat{\phi}(k) + \bbOne_{k > 0}\overline{T(k)}\int \mathcal{K}_{+,T}(x,k) \phi(x)\;dx
    \end{equation*}
    where
    $$\mathcal{K}_{+,T}(x,k) = \bbOne_{k > 0} \frac{1}{\sqrt{2\pi}} \chi_+(x) (\overline{m_+(x,k)}-1)e^{-ixk}$$
    Since $T(k)$ is bounded, we have that
    \begin{equation*}
        \lVert \bbOne_{k > 0}\overline{T(k)}\hat{\phi} \rVert_{L^2_k} \lesssim \lVert \phi \rVert_{L^2_x}
    \end{equation*}
    For the second term, we note that by~\Cref{lem:m-pm-basic-decay}
    \begin{equation}\label{eqn:calK-bd}
        |\mathcal{K}_{+,T}(x,k)| \lesssim \frac{\jBra{x}^{\theta - \gamma}}{|k|^{1 - \theta} \jBra{k}^{\theta}}
    \end{equation}
    which is $L^2_{x,k}$ for $1/2 < \theta < \gamma - 1/2$.  Thus, by Schur's lemma,
    \begin{equation*}
        \lVert \int \mathcal{K}_{+,T}(x,k) \phi(x)\;dx \rVert_{L^2_k} \lesssim \lVert \phi \rVert_{L^2_x}
    \end{equation*}

    To prove the second part of the theorem, we write
    \begin{equation*}
        \cF^* \DFT_{+,T} \phi = \bbOne_{D > 0} T(D) \phi + \bbOne_{D > 0} T(D) \cF^* \int \overline{(m_+(y,k) - 1)} e^{-iyk} \chi_+(y) \phi(y)\;dy
    \end{equation*}
    Since $\bbOne_{D > 0}$ and $T(D)$ are bounded on $L^p$, it is enough to show that the operator
    \begin{equation*}
        \cF^* \int \overline{(m_+(y,k) - 1)} e^{-iyk} \chi_+(y) \phi(y)\;dy := \cF^* A \phi
    \end{equation*}
    is bounded on $L^p$.  To do this, we will show that $A$ and $\partial_k A$ are bounded from $L^1_x$ to $L^2_k$.  This shows that $A: L^1_x \to \jBra{k}^{-1} L^2_k \subset L^1_k$, and by interpolating with the bound $A: L^2_x \to L^2_k$ proved earlier, we get the result.  The $L^1_x \to L^2_k$ bound for $A$ follows immediately from~\eqref{eqn:calK-bd}, so we focus on the bound for $\partial_k A$.  We have that
    \begin{equation*}\begin{split}
        \partial_k A \phi =& -i\int y \overline{(m_+(y,k) - 1)} e^{-iyk} \chi_+(y) \phi(y)\;dy\\
        &+ \int \overline{\partial_k m_+(y,k)} e^{-iyk} \chi_+(y) \phi(y)\;dy\\
        =& B_1 \phi + B_2 \phi
    \end{split}\end{equation*}
    The first term is simply $y K_{+,T}(y,k)$, and with $\gamma > 2$ the bound~\eqref{eqn:calK-bd} with $\theta = 1$ is sufficient to show that $\lVert y K_{+,T}(y,k) \rVert_{L^2_k L^\infty_y} < \infty$, so $\lVert B_1 \rVert_{L^1_x \to L^2_k} < \infty$.  Turning to $B_2$, we note that
    \begin{equation*}
        |\partial_k m_+(y,k) \chi_+(y)| \lesssim \frac{\jBra{y}^{2-\gamma}}{\jBra{k}} \in L^2_kL^\infty_y
    \end{equation*}
    which gives the desired bound for $B_2$, which completes the proof.
\end{proof}
Note that~\Cref{lem:pDFT-bddness} also implies that the adjoint partial DFTs $\DFT_{\pm,A}^*$ are bounded on $L^2$.  

\subsubsection{Wave operators}

The M\o{}ller wave operator $\waveop$ is defined by
\begin{equation}\label{eqn:wave-op}
    \waveop = \DFT^{*} \mathcal{F}
\end{equation}
Since the flat and distorted Fourier transforms are unitary, so is the wave operator.  In particular, its inverse is given by
\begin{equation}\label{eqn:adj-wave-op}
    \waveop^{-1} = \waveop^* = \cF^{*}\DFT
\end{equation}
For our purposes, it will only be necessary to note that the wave operator and its adjoint are bounded on $L^p$, $1 < p < \infty$:
\begin{lemma}\label{lem:wave-op-bddness}
    For $1 < p < \infty$ and for $\gamma \geq 2$, $\waveop, \waveop^*: L^p \to L^p$.
\end{lemma}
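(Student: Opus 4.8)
The plan is to reduce the statement to the partial distorted Fourier transform bounds of \Cref{lem:pDFT-bddness}. First I would record that the distorted Fourier transform decomposes as $\DFT = \sum_{\sigma \in \{+,-\}} \sum_{A \in \{T,R,I\}} \DFT_{\sigma,A}$. This follows by inserting $1 = \chi_+(x) + \chi_-(x)$ into the definition~\eqref{eqn:psi-def} of $\psi(x,k)$ and using the connection formulas~\eqref{eqn:T-R-def-1} and~\eqref{eqn:T-R-def-2} to rewrite $f_+(x,k)$ on $\supp \chi_-$ (and $f_-(x,-k)$ on $\supp \chi_+$) in terms of the Jost solution adapted to the corresponding half-line; comparing with~\eqref{eqn:partial-DFT-defs} gives the identity. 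Dualizing, $\DFT^* = \sum_{\sigma,A} \DFT_{\sigma,A}^*$, so that $\waveop^* = \cF^* \DFT = \sum_{\sigma,A} \cF^* \DFT_{\sigma,A}$ and $\waveop = \DFT^* \cF = \sum_{\sigma,A} \DFT_{\sigma,A}^* \cF = \sum_{\sigma,A} (\cF^* \DFT_{\sigma,A})^*$. Since all sums are finite, it suffices to bound the individual pieces.

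For $1 < p < 2$, \Cref{lem:pDFT-bddness} (applicable because $\gamma > 2$) gives $\cF^* \DFT_{\sigma,A} : L^p \to L^p$, hence $\waveop^* : L^p \to L^p$; taking adjoints, $\DFT_{\sigma,A}^* \cF = (\cF^* \DFT_{\sigma,A})^* : L^{p'} \to L^{p'}$ for $2 < p' < \infty$, hence $\waveop : L^{p'} \to L^{p'}$ for $2 < p' < \infty$. The endpoint $p = 2$ is immediate from the unitarity of $\waveop$ and $\waveop^*$. It remains to handle the complementary ranges $\waveop : L^p \to L^p$ with $1 < p < 2$ and $\waveop^* : L^p \to L^p$ with $2 < p < \infty$; by duality it is enough to prove the companion estimate $\DFT_{\sigma,A}^* \cF : L^p \to L^p$ for $1 < p < 2$. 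I would do this mirroring the proof of~\Cref{lem:pDFT-bddness}: writing, e.g., $\DFT_{+,T}^* \cF \phi = \chi_+ \cdot \bbOne_{D > 0} T(D) \phi + \chi_+ \cdot \cF^*\big[\bbOne_{k > 0} T(k)(m_+(x,k) - 1) \hat\phi(k)\big]$, the first term is bounded on $L^p$ for all $1 < p < \infty$ by~\Cref{lem:TR-pdo-bd} together with the $L^p$-boundedness of the Riesz projection, while the second is the integral operator with kernel $\frac{\chi_+(x)}{2\pi}\int_0^\infty e^{i(x-y)k} T(k)(m_+(x,k)-1)\,dk$, which one estimates by Schur and interpolates with the $L^2$ bound; the inputs are the decay bounds~\eqref{eqn:d-k-m-pm-basic-bdds} with a suitable choice of $\theta$ and the derivative bound of~\Cref{lem:TR-deriv-bdds}. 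The remaining pieces ($\DFT_{\pm,R}^*\cF$, $\DFT_{\pm,I}^*\cF$) are handled identically, replacing $T$ by $R_\pm$ (or $1$) and $m_+$ by $m_\pm$.

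I expect the main obstacle to be the Schur estimate for this kernel in precisely the range needed: unlike in~\Cref{lem:pDFT-bddness}, the "cheap" Schur direction here controls the wrong endpoint ($L^\infty \to L^\infty$, i.e.\ the range $2 \le p \le \infty$, which is already available by duality), so one genuinely needs $\sup_y \int_x |\,\cdot\,|\,dx < \infty$. This requires an integration by parts in $k$ to produce $\jBra{x-y}^{-2}$-type decay before invoking~\eqref{eqn:d-k-m-pm-basic-bdds} and integrating in $x$ against $\jBra{x}^{\theta-\gamma}$; since $\gamma > 2$ this closes, with room to spare. (Alternatively, since $\waveop$ is the M\o{}ller wave operator for a potential with $\jBra{x}^2 V \in L^1$, the full statement for all $1 < p < \infty$ and all $\gamma \ge 2$ — including the borderline $\gamma = 2$, which is not covered by~\Cref{lem:pDFT-bddness} — is contained in the result of~\cite{danconaLpBoundednessWaveOperator2006}, and one may simply cite it.)
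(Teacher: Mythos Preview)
The paper does not prove this lemma; it simply cites~\cite{danconaLpBoundednessWaveOperator2006}, which is precisely the alternative you mention in your final parenthetical. Your main approach---reducing to the partial DFT bounds of \Cref{lem:pDFT-bddness}---is genuinely different and more self-contained, and the half you get directly from that lemma (namely $\waveop^*:L^p\to L^p$ for $1<p<2$ and, by duality, $\waveop:L^{p'}\to L^{p'}$ for $2<p'<\infty$) is correct.

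However, the ``companion estimate'' $\DFT_{\sigma,A}^*\cF:L^p\to L^p$ for $1<p<2$ is more delicate than your sketch suggests. To obtain $\jBra{x-y}^{-2}$ decay of the kernel by two integrations by parts in $k$ you need bounds on $\partial_k^2 m_\pm$, which \Cref{lem:m-pm-basic-decay} supplies only for $n\le\gamma-1$, i.e.\ $\gamma\ge 3$; with a single integration by parts the remaining $k$-integral of $T(k)\partial_k m_+(x,k)$ is still logarithmically divergent at infinity by~\eqref{eqn:d-k-m-pm-basic-bdds}, and for exceptional potentials the boundary term at $k=0$ contributes a $|x-y|^{-1}$ singularity that is not in $L^1_x$ near $x=y$. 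These obstructions can plausibly be repaired (a near/far splitting in $|x-y|$, a dyadic decomposition in $k$, and a separate argument at $k=0$), but it is real work rather than ``room to spare,'' and in any case \Cref{lem:pDFT-bddness} already forces $\gamma>2$, so the endpoint $\gamma=2$ in the statement is out of reach by this route. The citation to~\cite{danconaLpBoundednessWaveOperator2006} is the appropriate move here.
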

For proof, we refer the reader to~\cite{danconaLpBoundednessWaveOperator2006}.  In general, the endpoints $p = 1$ and $p = \infty$ are excluded, since the wave operator contains a Hilbert transform unless the potential is very exceptional~\cite{wederWkPContinuitySchrodinger1999}.

\subsection{Dispersive estimates}

We now record several dispersive estimates that will be of use to us.  The first is a decay estimate:
\begin{lemma}\label{lem:H-1-infty-decay}
    Suppose $V \in L^1_1$.  For $\phi \in L^1$, we have the estimate
    \begin{equation}\label{eqn:H-1-infty-decay}
        \lVert e^{itH} \phi \rVert_{L^\infty} \lesssim t^{-1/2} \lVert \phi \rVert_{L^1}
    \end{equation}
    Moreover, if instead $\tilde{\phi} \in L^\infty$ and $\partial_k \tilde{\phi} \in L^2$, then
    \begin{equation}\label{eqn:H-J-infty-decay}
        \lVert e^{itH} \phi \rVert_{L^\infty} \lesssim t^{-1/2} \lVert \tilde{\phi} \rVert_{L^\infty} + t^{-3/4} \lVert \partial_k \tilde{\phi} \rVert_{L^2}
    \end{equation}
\end{lemma}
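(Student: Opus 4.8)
The plan is to prove the two estimates by expanding $e^{itH}$ using the distorted Fourier transform, writing $e^{itH}\phi(x) = \int \psi(x,k) e^{itk^2} \tilde\phi(k)\;dk$, and then splitting into the pieces coming from the incident, reflected, and transmitted waves (i.e., using the decomposition of $\psi(x,k)$ built from $\chi_\pm$, $m_\pm$, $T$, $R_\pm$ as in~\eqref{eqn:partial-DFT-defs}). For~\eqref{eqn:H-1-infty-decay}, the strategy is the classical one: reduce to an oscillatory integral bound. After changing variables, the kernel of $e^{itH}$ is $\int \psi(x,k)\overline{\psi(y,k)} e^{itk^2}\;dk$, and we bound this pointwise by $t^{-1/2}$ uniformly in $x,y$; convolving against $\phi \in L^1$ then gives~\eqref{eqn:H-1-infty-decay}. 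The kernel bound follows by writing $\psi(x,k)\overline{\psi(y,k)}$ in terms of the bounded coefficients $T,R_\pm$ and the functions $m_\pm(x,k)-1$ (which are $O(1)$ for $\pm x \gtrsim -1$ by~\Cref{lem:m-pm-basic-decay}), stationary phase in $k$ for the leading $e^{ik(x-y)}$ and $e^{ik(x+y)}$ terms, and a Van der Corput / integration-by-parts argument for the remainder terms using the derivative bounds~\eqref{eqn:dTR-bounds} and $|\partial_k m_\pm| \lesssim \jBra{x}^{\theta}\cdot|k|^{\theta-1}$ together with $V \in L^1_1$ to absorb the $x$-weights against the potential. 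This is all standard and appears in the cited references~\cite{deiftInverseScatteringLine1979b,wederLpLpEstimates2000,egorovaDispersionEstimatesOnedimensional2016}, so I would state it as essentially known and give only the reduction.

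For~\eqref{eqn:H-J-infty-decay}, the point is that we do not have $\phi \in L^1$, only $\tilde\phi \in L^\infty$ and $\partial_k\tilde\phi \in L^2$; the gain is that we are allowed a weaker decay rate $t^{-3/4}$ on the second piece. Here the natural approach is a dyadic/stationary-phase decomposition in frequency. Writing $e^{itH}\phi(x) = \int \psi(x,k) e^{itk^2}\tilde\phi(k)\;dk$ and localizing near the stationary point $k_0 = -x/2t$ (coming from the phase $tk^2 + kx$ in the leading-order part of $\psi$), I would split $\tilde\phi(k) = \tilde\phi(k_0) + (\tilde\phi(k) - \tilde\phi(k_0))$. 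The first, constant, piece is handled by the classical stationary phase estimate for $\int \psi(x,k) e^{itk^2}\;dk$, which contributes $t^{-1/2}\|\tilde\phi\|_{L^\infty}$ (this uses that $\psi(x,k)$, being built from bounded $T,R_\pm$ and bounded-plus-decaying $m_\pm$, behaves like $e^{ikx}$ up to controllable errors — the non-flat part is genuinely better because of the extra $x$-decay of $m_\pm - 1$, but we only need it to be no worse). For the second piece, one writes $\tilde\phi(k) - \tilde\phi(k_0) = \int_{k_0}^k \partial_{k'}\tilde\phi(k')\;dk'$ and uses that $|\tilde\phi(k) - \tilde\phi(k_0)| \lesssim |k - k_0|^{1/2}\|\partial_k\tilde\phi\|_{L^2}$ by Cauchy–Schwarz; the extra $|k-k_0|^{1/2}$ improves the stationary-phase gain by $t^{-1/4}$, yielding $t^{-3/4}\|\partial_k\tilde\phi\|_{L^2}$.

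The main obstacle — and the place where care is genuinely needed — is the behavior near $k = 0$, where the phase $tk^2$ has its critical point colliding with the frequency origin and where the coefficients $T(k), R_\pm(k)$ and the functions $m_\pm(x,k)$ can degenerate (e.g., $T(0) = 0$ for generic $V$, or $m_\pm$ unbounded as in the generic-resonance case). In the range $|k_0| \lesssim t^{-1/2}$ the stationary point is at or near the origin, so one cannot cleanly separate "stationary phase in $k$" from "low-frequency analysis." The resolution should be to handle $|k| \lesssim t^{-1/2}$ directly by crude estimates: on this region the measure of the $k$-interval is $\lesssim t^{-1/2}$, and combining $\|\tilde\phi\|_{L^\infty}$ with the pointwise bounds on $\psi(x,k)$ (using~\Cref{lem:m-pm-basic-decay} with an appropriate choice of $\theta$ to keep things locally integrable near $k = 0$) gives the $t^{-1/2}\|\tilde\phi\|_{L^\infty}$ contribution, while the $\partial_k\tilde\phi$ piece, again via Cauchy–Schwarz against a measure of size $t^{-1/2}$, gives something of size $t^{-1/2}\cdot t^{-1/4} = t^{-3/4}$ as needed. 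I expect the bookkeeping of these low-frequency contributions, and verifying that the $x$-dependence in the $m_\pm - 1$ error terms is always paired with a factor of $V$ (so that $V \in L^1_1$ suffices), to be the only delicate part; everything else reduces to the flat stationary-phase estimates together with the structural bounds already recorded in \Cref{lem:m-pm-basic-decay,lem:TR-pdo-bd,lem:TR-deriv-bdds}.
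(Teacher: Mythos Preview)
The paper does not prove this lemma at all: its entire proof is a pair of citations, to~\cite{egorovaDispersionEstimatesOnedimensional2016} for~\eqref{eqn:H-1-infty-decay} and to~\cite[Proposition~3.1]{germainQuadraticKleinGordonEquations2022a} for~\eqref{eqn:H-J-infty-decay}. So there is nothing to compare at the level of argument; your sketch is essentially an outline of how those cited proofs proceed, and in broad strokes it is correct.

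That said, there is one genuine technical gap in your sketch that you should be aware of. You invoke bounds on $\partial_k m_\pm$ to carry out the integration by parts away from the stationary point, but the lemma only assumes $V \in L^1_1$, i.e.\ $\gamma = 1$. Under that hypothesis, \Cref{lem:m-pm-basic-decay} gives nothing for $\partial_k m_\pm$ (the range $0 \le n \le \gamma - 1$ forces $n = 0$), and indeed $\partial_k m_\pm$ need not be bounded in $x$ when only $\jBra{x}V \in L^1$. The cited references get around this not by pointwise bounds on $\partial_k m_\pm$ but by working instead with the representation $m_\pm(x,k) = 1 + \int_0^{\pm\infty} B_\pm(x,y) e^{2iky}\,dy$ (the Marchenko kernel), which lets one perform the $k$-integration first and reduce the dispersive estimate to $L^1$ bounds on $B_\pm$; this is where the sharp $L^1_1$ condition enters. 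Your ``bounded coefficients plus integration by parts'' outline would go through cleanly under $V \in L^1_2$, but under $L^1_1$ alone it needs this extra ingredient. A smaller point: your discussion of the low-frequency obstacle conflates two distinct regions, namely $|k| \lesssim t^{-1/2}$ (near the spectral origin, where $T$, $R_\pm$ may degenerate) and $|k - k_0| \lesssim t^{-1/2}$ (near the stationary point $k_0 = \mp x/(2t)$, which depends on $x$ and on which partial DFT piece you are in); both need to be handled, and they coincide only when $|x| \lesssim t^{1/2}$.
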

\begin{proof}
    Equation~\eqref{eqn:H-1-infty-decay} follows from~\cite{egorovaDispersionEstimatesOnedimensional2016} (see also~\cite{goldbergDispersiveEstimatesSchrodinger2004}), while~\eqref{eqn:H-J-infty-decay} is proved in~\cite[Proposition 3.1]{germainQuadraticKleinGordonEquations2022a}.
\end{proof}
Since $e^{itH}$ is unitary, the abstract Strichartz estimate of Keel-Tao~\cite{keelEndpointStrichartzEstimates1998} immediately gives us the following lemma:
\begin{lemma}\label{lem:H-strichartz}
    Suppose $V \in L^1_1$.  Then,
    \begin{equation}
        \lVert e^{itH} \phi \rVert_{S} \lesssim \lVert \phi \rVert_{L^2}
    \end{equation}
    and
    \begin{equation}
        \left\lVert \int_0^t e^{i(t-s)H} F(s) \;ds \right\rVert_{L^{\tilde{q}}_tL^{\tilde{r}}_x} \lesssim \lVert F \rVert_{L^{q'}_tL^{r'}_x}
    \end{equation}
    where $(q,r)$ and $(\tilde{q}, \tilde{r})$ are any exponents satisfying the admissibility condition
    \begin{equation}\label{eqn:strich-admiss-cond}
        \frac{1}{r} + \frac{2}{q} = \frac{1}{2},\qquad q,r \geq 2
    \end{equation}
\end{lemma}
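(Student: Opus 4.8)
The plan is to derive both estimates from the abstract Strichartz theorem of Keel and Tao~\cite{keelEndpointStrichartzEstimates1998}, so the only real work is to verify its two structural hypotheses for the propagator $U(t) = e^{itH}$. The first is the energy estimate $\lVert U(t)\phi\rVert_{L^2} = \lVert\phi\rVert_{L^2}$, which is immediate from the unitarity of $e^{itH}$ noted above (equivalently, from self-adjointness of $H$). The second is the untruncated dispersive bound $\lVert U(t)U(s)^*\phi\rVert_{L^\infty} \lesssim |t-s|^{-1/2}\lVert\phi\rVert_{L^1}$, which is exactly~\eqref{eqn:H-1-infty-decay} from~\Cref{lem:H-1-infty-decay} once we use the group law $U(t)U(s)^* = e^{i(t-s)H}$. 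Hence the hypotheses of~\cite{keelEndpointStrichartzEstimates1998} hold with decay exponent $\sigma = 1/2$.

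Granting these two inputs, I would then simply invoke the conclusion of the Keel--Tao theorem with $\sigma = 1/2$, which produces the homogeneous estimate $\lVert U(t)\phi\rVert_{L^q_tL^r_x}\lesssim\lVert\phi\rVert_{L^2}$ together with the retarded inhomogeneous estimate $\lVert\int_{s<t}U(t)U(s)^*F(s)\,ds\rVert_{L^{\tilde q}_tL^{\tilde r}_x}\lesssim\lVert F\rVert_{L^{q'}_tL^{r'}_x}$ for any pairs obeying $\tfrac1r+\tfrac2q=\tfrac12$ with $q,r\ge2$, which is precisely the admissibility condition~\eqref{eqn:strich-admiss-cond}. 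Taking the appropriate combination over all such pairs yields $\lVert e^{itH}\phi\rVert_S\lesssim\lVert\phi\rVert_{L^2}$, and the second displayed estimate is the retarded bound after normalizing the lower endpoint of integration to $0$ and absorbing the unitary factor $e^{itH}$ on the left-hand side.

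I do not expect any serious obstacle: since we are in one space dimension, $\sigma = 1/2 < 1$, so the admissible range is $q\in[4,\infty]$ and in particular the Keel--Tao endpoint $q=2$ is never reached; thus only the elementary $TT^*$ plus Hardy--Littlewood--Sobolev portion of~\cite{keelEndpointStrichartzEstimates1998} is needed, and one could equally well cite the classical arguments of Strichartz or Ginibre--Velo. The one point deserving care is purely bookkeeping: reconciling the convention in~\eqref{eqn:strich-admiss-cond} (phrased in terms of $(q,r)$ rather than a dual exponent pair) with the statement of the general retarded Strichartz inequality, and checking that the asserted estimate with time integration over $(0,t)$ is indeed the stated special case.
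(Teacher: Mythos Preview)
Your proposal is correct and matches the paper's approach exactly: the paper does not give a standalone proof but simply remarks (in the sentence preceding the lemma) that unitarity of $e^{itH}$ together with the dispersive bound~\eqref{eqn:H-1-infty-decay} feeds directly into the abstract Keel--Tao theorem. Your write-up just spells out those two hypotheses and the invocation, which is precisely what the paper leaves implicit.
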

If $V \in L^1_2$ and $\tilde{\phi}(0) = 0$, then we get improved decay of $e^{itH} \phi$ near $0$.
\begin{lemma}\label{lem:improved-local-decay}
    Suppose $V \in L^1_2$.  Then, if $\tilde{\phi}(0) = 0$,
    \begin{equation}
        \lVert \jBra{x}^{-1} e^{itH} \phi \rVert_{L^\infty_x} \lesssim t^{-3/4} \left( \lVert \phi \rVert_{L^2} + \lVert \partial_k \phi \rVert_{L^2} \right)
    \end{equation}
\end{lemma}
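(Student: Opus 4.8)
The plan is to establish the improved local decay bound
$$\lVert \jBra{x}^{-1} e^{itH}\phi \rVert_{L^\infty_x} \lesssim t^{-3/4}\left(\lVert\phi\rVert_{L^2} + \lVert\partial_k\phi\rVert_{L^2}\right)$$
by writing $e^{itH}\phi$ via the distorted Fourier transform and exploiting the vanishing $\tilde\phi(0) = 0$ to gain an extra half-power of decay over the standard $t^{-1/2}$ rate near the origin. First I would use the spectral representation
$$e^{itH}\phi(x) = \int e^{itk^2}\psi(x,k)\tilde\phi(k)\;dk,$$
and split the $k$-integral dyadically (or into the regions $|k| \lesssim t^{-1/2}$ and $|k| \gtrsim t^{-1/2}$). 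On the low-frequency piece $|k| \lesssim t^{-1/2}$, I would write $\tilde\phi(k) = \int_0^k \partial_{k'}\tilde\phi(k')\;dk'$ using $\tilde\phi(0) = 0$, so that $|\tilde\phi(k)| \lesssim |k|^{1/2}\lVert\partial_k\tilde\phi\rVert_{L^2}$ by Cauchy-Schwarz; combined with the pointwise bound $|\psi(x,k)| \lesssim \jBra{x}^{0+}$ coming from~\Cref{lem:m-pm-basic-decay} (and the boundedness of $T(k)$), and the weight $\jBra{x}^{-1}$, the contribution of this region is bounded by $\int_{|k|\lesssim t^{-1/2}} |k|^{1/2}\;dk \cdot \lVert\partial_k\tilde\phi\rVert_{L^2} \lesssim t^{-3/4}\lVert\partial_k\tilde\phi\rVert_{L^2}$.

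For the high-frequency piece $|k| \gtrsim t^{-1/2}$, the plan is a stationary phase / integration by parts argument in $k$. Writing $\psi(x,k) = (2\pi)^{-1/2}T(\pm k)e^{\pm ixk}m_\pm(x,\mp k)$ (with the appropriate sign depending on $\text{sgn}\,k$), the phase is $tk^2 \pm xk$ with stationary point $k_0 = \mp x/2t$; away from $k_0$ one integrates by parts using the operator $\frac{1}{i(2tk \pm x)}\partial_k$, and near $k_0$ one uses the standard van der Corput bound for the oscillatory factor $e^{itk^2}$. Here I would need to control the $k$-derivatives of the amplitude $T(\pm k)m_\pm(x,\mp k)\tilde\phi(k)$: the bound $|\partial_k T| \lesssim \jBra{k}^{-1}$ from~\Cref{lem:TR-deriv-bdds} and the bound $|\partial_k m_\pm(x,k)| \lesssim \jBra{x}^{1+0}$ from~\Cref{lem:m-pm-basic-decay} (with $n=1$, $\theta$ close to $1$, and $V \in L^1_2$) handle the Jost amplitude, while the $\tilde\phi$ factor is handled by pairing $\partial_k\tilde\phi \in L^2$ with the $L^2$ mass of the rest of the amplitude over the relevant $k$-region. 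Since this is essentially the proof of~\eqref{eqn:H-J-infty-decay} in~\Cref{lem:H-1-infty-decay} with the extra localization $|k| \gtrsim t^{-1/2}$ from the vanishing at $0$, in practice I would simply invoke that result: on $|k| \gtrsim t^{-1/2}$ one gets $t^{-1/2}\lVert\tilde\phi\bbOne_{|k|>t^{-1/2}}\rVert_{L^\infty} + t^{-3/4}\lVert\partial_k\tilde\phi\rVert_{L^2}$, and then $\lVert\tilde\phi\bbOne_{|k|>t^{-1/2}}\rVert_{L^\infty} \lesssim t^{-1/4}\lVert\partial_k\tilde\phi\rVert_{L^2}$ again by the fundamental theorem of calculus and Cauchy-Schwarz, giving the claimed $t^{-3/4}$.

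Collecting the two regions and using that $\DFT$ is unitary so $\lVert\tilde\phi\rVert_{L^2} = \lVert\phi\rVert_{L^2}$ and $\lVert\partial_k\tilde\phi\rVert_{L^2} \lesssim \lVert\phi\rVert_{L^2} + \lVert\partial_k\tilde\phi\rVert_{L^2}$ (with the understanding that $\partial_k\phi$ in the statement denotes $\partial_k\tilde\phi$, consistent with the notation used in~\Cref{lem:H-1-infty-decay}), yields the bound. The main obstacle I anticipate is making the interaction between the spatial weight $\jBra{x}^{-1}$ and the growth of the Jost amplitudes $m_\pm$ and their $k$-derivatives completely clean: the bound $|\partial_k m_\pm(x,k)\chi_\pm(x)| \lesssim \jBra{x}^{2-\gamma}$ requires $\gamma > 2$ to be summable against $\jBra{x}^{-1}$ in the relevant norms, which is exactly why the hypothesis is $V \in L^1_2$ rather than merely $L^1_1$; one has to check that after applying the $\jBra{x}^{-1}$ weight all the spatially-growing pieces are integrable or at least bounded uniformly in $x$. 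The rest is the standard oscillatory-integral bookkeeping that already underlies~\Cref{lem:H-1-infty-decay}, so I would keep that part terse and refer to~\cite{chen1dimensionalNonlinearSchrodinger2022} and~\cite{germainQuadraticKleinGordonEquations2022a} for the details.
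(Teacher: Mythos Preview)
The paper does not prove this lemma in-text; it simply cites \cite{chen1dimensionalNonlinearSchrodinger2022}. Your overall architecture (low/high frequency split at $|k|\sim t^{-1/2}$, using $\tilde\phi(0)=0$ via the fundamental theorem of calculus on the low piece, oscillatory-integral estimates on the high piece) matches what the paper does for the closely related \Cref{lem:improved-low-x-bdds}, and is the standard strategy in the cited reference.

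However, your high-frequency step contains a genuine error. The claim $\lVert\tilde\phi\,\bbOne_{|k|>t^{-1/2}}\rVert_{L^\infty} \lesssim t^{-1/4}\lVert\partial_k\tilde\phi\rVert_{L^2}$ is false: the bound $|\tilde\phi(k)| \lesssim |k|^{1/2}\lVert\partial_k\tilde\phi\rVert_{L^2}$ gives smallness only near $k=0$, whereas the region $|k|\gtrsim t^{-1/2}$ includes $k$ of order one, where $\tilde\phi$ need not be small at all. Consequently, invoking \eqref{eqn:H-J-infty-decay} on $\tilde\phi_{\mathrm{hi}}$ and then trying to absorb $\lVert\tilde\phi_{\mathrm{hi}}\rVert_{L^\infty}$ into a decaying factor does not close.

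The missing idea is that the weight $\jBra{x}^{-1}$ must play an \emph{active} role on the high-frequency piece, not merely absorb Jost-function growth. One clean fix is to split in $x$ instead of only in $k$. For $|x|\gtrsim t^{1/2}$ one has $\jBra{x}^{-1}\lesssim t^{-1/2}$, and combining this with \eqref{eqn:H-J-infty-decay} and the Sobolev bound $\lVert\tilde\phi\rVert_{L^\infty}\lesssim\lVert\tilde\phi\rVert_{H^1_k}\lesssim\lVert\phi\rVert_{L^2}+\lVert\partial_k\tilde\phi\rVert_{L^2}$ already gives $t^{-1}$. For $|x|\lesssim t^{1/2}$ the stationary point $k_0=\mp x/(2t)$ satisfies $|k_0|\lesssim t^{-1/2}$, so either it lies outside $\supp\tilde\phi_{\mathrm{hi}}$ and a nonstationary-phase argument (as in \Cref{lem:abs-nonst-ph}) gives $t^{-3/4}$, or equivalently the stationary-phase main term is $t^{-1/2}|\tilde\phi(k_0)|\lesssim t^{-1/2}|k_0|^{1/2}\lVert\partial_k\tilde\phi\rVert_{L^2}\lesssim t^{-3/4}\lVert\partial_k\tilde\phi\rVert_{L^2}$. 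This is precisely the mechanism by which the weight $\jBra{x}^{-1}$ and the vanishing $\tilde\phi(0)=0$ cooperate; your write-up identifies the weight only as a nuisance to be absorbed, which is why the argument stalls.
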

This result is proved in~\cite{chen1dimensionalNonlinearSchrodinger2022}.  It also follows from~\Cref{lem:H-1-infty-decay} and the following result, which gives local decay in the region $|x| \lesssim t^{1/2}$:
\begin{lemma}\label{lem:improved-low-x-bdds}
    Suppose $V \in L^1_2$, and let $A > 0$. If $\tilde{\phi}(0) = 0$ and $|x| \leq At^{1/2}$, then
    \begin{equation}
        |e^{itH} \phi(x)| \lesssim_A t^{-3/4} \lVert \partial_k \tilde{\phi} \rVert_{L^2}
    \end{equation}
\end{lemma}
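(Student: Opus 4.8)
The plan is to work directly with the distorted oscillatory integral
$$e^{itH}\phi(x) = \DFT^*\big(e^{itk^2}\tilde\phi\big)(x) = \int \psi(x,k)\,e^{itk^2}\,\tilde\phi(k)\;dk,$$
and it suffices by symmetry to treat $x \ge 0$ (the case $x \le 0$ is handled identically with the roles of $f_+$ and $f_-$ interchanged). The first step is to rewrite $\psi(x,k)$ on each of the half-lines $\{k>0\}$ and $\{k<0\}$ so that its oscillation is carried by a single exponential $e^{\pm ixk}$ times an amplitude which, together with its $k$-derivative, is bounded uniformly in $x\ge0$ and $t\ge1$. For $k>0$ I would use $\psi(x,k) = (2\pi)^{-1/2}e^{ixk}T(k)m_+(x,k)$: since $x\ge0$ is on the favorable side for $m_+$, \Cref{lem:m-pm-basic-decay} gives $|m_+(x,k)|\lesssim1$ and $|\partial_k m_+(x,k)|\lesssim|k|^{\theta-1}$ uniformly in $x\ge0$ for any $0<\theta\le\gamma-1$, while \Cref{lem:TR-deriv-bdds} controls $T$ and $T'$. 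For $k<0$ the factor $f_-(x,-k)$ in the definition of $\psi$ is on its \emph{unfavorable} side for large $x$, so instead I would use the scattering relation (a rearrangement of~\eqref{eqn:T-R-def-2}) $T(-k)f_-(x,-k)=R_+(-k)f_+(x,-k)+f_+(x,k)$ to write
$$\psi(x,k) = (2\pi)^{-1/2}\big[e^{ixk}m_+(x,k)+e^{-ixk}R_+(-k)m_+(x,-k)\big],\qquad k<0,$$
which again involves only $m_+(x,\pm k)$, bounded with controlled derivative. In this way $e^{itH}\phi(x)$ becomes a sum of three oscillatory integrals with phases $\Phi_\pm(k)=tk^2\pm xk$, each with a single stationary point at $|k|=|x|/(2t)\le\tfrac{A}{2}t^{-1/2}$, and with $|\Phi_\pm'(k)|=2t|k-k_*|\ge t|k|$ whenever $|k|\ge At^{-1/2}$.

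Next I would fix a smooth bump $\chi$ equal to $1$ on $[-1,1]$ and supported in $[-2,2]$, and split each of the three integrals using $\chi(kt^{1/2}/A)$ and $1-\chi(kt^{1/2}/A)$. On the low-frequency piece $|k|\lesssim_A t^{-1/2}$ I would discard all oscillation, use the uniform bound $\sup_{x,k}|\psi(x,k)|\lesssim1$, and bound $\tilde\phi$ using the vanishing at the origin: by Cauchy--Schwarz $|\tilde\phi(k)| = \big|\int_0^k\partial_{k'}\tilde\phi(k')\;dk'\big|\le|k|^{1/2}\lVert\partial_k\tilde\phi\rVert_{L^2}$, so this piece is $\lesssim\int_{|k|\le2At^{-1/2}}|k|^{1/2}\,dk\cdot\lVert\partial_k\tilde\phi\rVert_{L^2}\lesssim_A t^{-3/4}\lVert\partial_k\tilde\phi\rVert_{L^2}$ — the dominant contribution, and the source of the exponent $3/4$. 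On the high-frequency piece the phase is non-stationary — this is precisely where the hypothesis $|x|\le At^{1/2}$ enters, pushing the stationary point into the low-frequency region — so I would integrate by parts once in $k$. There are no boundary terms (the cutoff kills $k=0$, and $\tilde\phi\in H^1\hookrightarrow C_0$ at infinity), leaving three types of terms: (a) the derivative on $\tilde\phi$, estimated by $\tfrac1t\lVert\partial_k\tilde\phi\rVert_{L^2}\lVert|k|^{-1}\rVert_{L^2(|k|\ge At^{-1/2})}\lesssim_A t^{-3/4}\lVert\partial_k\tilde\phi\rVert_{L^2}$; (b) the derivative on the amplitude or the cutoff, estimated using $|\partial_k m_+|\lesssim|k|^{\theta-1}$, $|T'|,|R_+'|\lesssim\jBra{k}^{-1}$, $|\partial_k\chi(kt^{1/2}/A)|\lesssim_A t^{1/2}\bbOne_{|k|\sim t^{-1/2}}$ together with $|\tilde\phi(k)|\le|k|^{1/2}\lVert\partial_k\tilde\phi\rVert_{L^2}$; and (c) the derivative on $1/\Phi_\pm'$, producing a factor $2t/(\Phi_\pm')^2$ and hence an integrand $\lesssim|\tilde\phi|/(t|k|^2)$. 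In each case I would substitute $k=t^{-1/2}u$; the $u$-integrals converge for $0<\theta<1/2$, and the powers of $t$ always combine to $t^{-3/4}$. Summing gives $|e^{itH}\phi(x)|\lesssim_A t^{-3/4}\lVert\partial_k\tilde\phi\rVert_{L^2}$.

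I expect the only genuinely delicate point to be the first step. If one differentiates a Jost function in its naive form $f_\pm(x,k)=e^{\pm ixk}m_\pm(x,k)$, the $k$-derivative produces a factor $\pm ix$, i.e.\ a loss of $\jBra{x}\sim t^{1/2}$ in the region $|x|\lesssim t^{1/2}$, which would wipe out the estimate. Avoiding this forces one to re-express $\psi$, on each region of $(x,k)$, through whichever Jost function is on its favorable side, so that after absorbing the exponential into the Schr\"odinger phase the remaining amplitude is bounded with bounded derivative. Once this bookkeeping is in place, the frequency cutoff and the single integration by parts are routine, and no smoothness of $\tilde\phi$ beyond $\partial_k\tilde\phi\in L^2$ is ever used.
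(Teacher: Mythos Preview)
Your proof is correct and follows essentially the same strategy as the paper: split at frequency scale $|k|\sim t^{-1/2}$, use the H\"older bound $|\tilde\phi(k)|\le|k|^{1/2}\lVert\partial_k\tilde\phi\rVert_{L^2}$ on the low-frequency piece, and integrate by parts once on the high-frequency piece where the phase is nonstationary by the hypothesis $|x|\le At^{1/2}$. The only cosmetic differences are that the paper packages the nonstationary-phase step into a separate abstract lemma (\Cref{lem:abs-nonst-ph}), uses the partial DFT decomposition with spatial cutoffs $\chi_\pm(x)$ rather than the scattering relation to place the Jost functions on their favorable side, and handles the low-frequency piece via the dispersive estimate~\eqref{eqn:H-J-infty-decay} rather than by direct integration.
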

To prove~\Cref{lem:improved-low-x-bdds}, we will make use of the following abstract nonstationary phase result (cf.~\cite[Lemma 3.3]{germainNonlinearSchrodingerEquation2018}):
\begin{lemma}\label{lem:abs-nonst-ph}
    Suppose $a(x,k)$ satisfies
    \begin{equation}\label{eqn:a-lem-bd}
        |a(x,k)| + |k||\partial_k a(x,k)| \lesssim 1
    \end{equation}
    uniformly in $x$ and $k$, and suppose that $f \in H^1_k$ is a function vanishing at $k = 0$.  Define
    \begin{equation}\label{eqn:I-abs-st-ph-def}
        \rmI(t,X,x) = \int_0^\infty e^{it(k-X)^2} a(x,k) f(k)\;dk
    \end{equation}
    Then, if $\supp(f) \cap (X - ct^{-1/2}, X + ct^{-1/2}) = \emptyset$,
    \begin{equation}
        |\rmI(t,X,x)| \lesssim_c t^{-3/4} \lVert \partial_k f \rVert_{L^2}
    \end{equation}
    with implicit constant independent of $t,X$ and $x$.
\end{lemma}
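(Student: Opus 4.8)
The plan is to integrate by parts once in $k$, exploiting that on $\supp f$ the (quadratic) phase $t(k-X)^2$ is nonstationary, with $|\partial_k[t(k-X)^2]| = 2t|k-X| \geq 2ct^{1/2}$, and then to use Hardy's inequality — available because $f$ vanishes on the whole neighbourhood $(X-ct^{-1/2},X+ct^{-1/2})$ of $X$ and because $f(0)=0$.

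Concretely, I would write $e^{it(k-X)^2} = \bigl(2it(k-X)\bigr)^{-1}\partial_k e^{it(k-X)^2}$ and integrate by parts. The factor $(k-X)^{-1}$ is harmless since the integrand vanishes identically near $k=X$; the boundary term at $k=\infty$ vanishes because $f\in H^1_k\hookrightarrow C_0$ while $|a(x,k)|\lesssim 1$; and the boundary term at $k=0$ vanishes since it is proportional to $f(0)=0$ (and when $X=0$ the integrand already vanishes near $k=0$). This leaves
$$\rmI(t,X,x) = -\frac{1}{2it}\int_0^\infty e^{it(k-X)^2}\left[\frac{\partial_k a(x,k)\,f(k)}{k-X} + \frac{a(x,k)\,\partial_k f(k)}{k-X} - \frac{a(x,k)\,f(k)}{(k-X)^2}\right]dk .$$
I would then bound each of the three integrals by Cauchy--Schwarz. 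For the second, $\|a\,\partial_k f/(k-X)\|_{L^1}\le \|a\|_{L^\infty}\,\|\bbOne_{\supp f}/(k-X)\|_{L^2}\,\|\partial_k f\|_{L^2}$, and since $|k-X|\ge ct^{-1/2}$ on $\supp f$ we get $\|\bbOne_{\supp f}/(k-X)\|_{L^2}\le (2/c)^{1/2}t^{1/4}$, so this term is $\lesssim_c t^{-1}\cdot t^{1/4}\|\partial_k f\|_{L^2}=t^{-3/4}\|\partial_k f\|_{L^2}$. For the first and third integrals I would peel off one power of $(k-X)^{-1}$, pair it with $f$, and estimate $\|f/(k-X)\|_{L^2}\lesssim\|\partial_k f\|_{L^2}$ by Hardy's inequality (pivot $X$, or pivot $0$ using $(k-X)^{-1}\le k^{-1}$ when $X<0$); the remaining factor — $\partial_k a$ in the first integral, $(k-X)^{-1}$ in the third — is again estimated in $L^2(\supp f)$, using the bound $|\partial_k a(x,k)|\lesssim |k|^{-1}$ from the hypothesis on $a$ and $|k|\gtrsim ct^{-1/2}$ on $\supp f$, to gain a further $\lesssim_c t^{1/4}$. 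Summing, $|\rmI(t,X,x)|\lesssim_c t^{-3/4}\|\partial_k f\|_{L^2}$ uniformly in $x$ and $X$.

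The step I expect to be the main obstacle is the term $af/(k-X)^2$ coming from differentiating $(k-X)^{-1}$: estimated crudely (with $f$ in $L^\infty$, say) it only produces $t^{-1/2}$ decay, and the point is to factor $(k-X)^{-2}=(k-X)^{-1}(k-X)^{-1}$ and to spend the vanishing of $f$ at $X$ on one factor (Hardy) and the $ct^{-1/2}$ separation on the other. A secondary point needing a little care is the region $|k|\lesssim ct^{-1/2}$ when $|X|$ is not small, where $|\partial_k a|$ may be as large as $|k|^{-1}$: there I would not use Hardy but rather $|f(k)|\le |k|^{1/2}\|\partial_k f\|_{L^2}$ (again from $f(0)=0$), which more than absorbs the singularity of $\partial_k a$.
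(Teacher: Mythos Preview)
Your proposal is correct and follows essentially the same approach as the paper: one integration by parts exploiting the nonstationary quadratic phase, then Cauchy--Schwarz and Hardy's inequality on each of the three resulting terms. The only noteworthy difference is in the $\partial_k a$ term: the paper pairs the bound $|\partial_k a|\lesssim |k|^{-1}$ directly with $f$ and applies Hardy at pivot $k=0$ (so that the remaining factor $(k-X)^{-1}$ sits in $L^2(\supp f)$ with norm $\lesssim_c t^{1/4}$), which avoids your case split for small~$|k|$; your variant with Hardy at pivot $X$ also works, but, as you correctly anticipate, forces the separate treatment of the region $|k|\lesssim ct^{-1/2}$.
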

\begin{proof}
    We will first prove the case where $X \geq 0$.  Based on the support condition, we can write
    \begin{equation*}\begin{split}
        \rmI(t,X,x) =& \int_{X + ct^{-1/2}}^\infty e^{it(k-X)^2} a(x,k) f(k)\;dk + \int_0^{X - ct^{-1/2}} e^{-it(k-X)^2} a(x,k) f(k)\;dk\\
        =:& \rmI_1 + \rmI_2
    \end{split}\end{equation*}
    with the convention that $\rmI_2 = 0$ if $X < ct^{-1/2}$.  Integration by parts yields that
    \begin{equation*}\begin{split}
        \rmI_1 =& \frac{i}{2t} \int_{X + ct^{-1/2}}^\infty e^{it(k-X)^2} \partial_k \left( \frac{1}{k-X} a(x,k) f(k)\right)\;dk\\
            =& -\frac{i}{2t} \int_{X + ct^{-1/2}}^\infty e^{it(k-X)^2} \frac{1}{(k-X)^2} a(x,k) f(k)\;dk\\
            &+ \frac{i}{2t} \int_{X + ct^{-1/2}}^\infty e^{it(k-X)^2} \frac{1}{k-X} \partial_k a(x,k) f(k)\;dk\\
            &+ \frac{i}{2t} \int_{X + ct^{-1/2}}^\infty e^{it(k-X)^2} \frac{1}{k-X} a(x,k) \partial_k f(k)\;dk\\
            =:& \rmI_{1,1} + \rmI_{1,2} + \rmI_{1,3}
    \end{split}\end{equation*}
    For $\rmI_{1,3}$, Cauchy-Schwarz yields the bound
    \begin{equation*}
        |\rmI_{1,3}| \lesssim \frac{1}{t} \left(\int_{X + ct^{-1/2}}^\infty \frac{dk}{(k-X)^2}\right)^{1/2} \lVert \partial_k f \rVert_{L^2} \lesssim t^{-3/4} \lVert \partial_k f \rVert_{L^2}
    \end{equation*}
    A similar argument holds for $\rmI_{1,1}$, since by Hardy's inequality
    \begin{equation*}
        \left\lVert \frac{f(k)}{X-k}\right\rVert_{L^2} \lesssim \lVert \partial_k f \rVert_{L^2} 
    \end{equation*}
    (recall that $f(X) = 0$).  For $\rmI_{1,2}$, we use the bound~\eqref{eqn:a-lem-bd} to find that
    \begin{equation*}
        |\rmI_{1,2}| \lesssim t^{-1} \int \frac{1}{|k-X|} \frac{|f(k)|}{|k|}\;dk
    \end{equation*}
    which can again be bounded using Hardy's inequality.  A similar argument holds for $\rmI_2$ once we note that $f(X - ct^{-1/2}) = f(0) = 0$, so again there are no boundary terms.

    For $X < 0$, we have
    \begin{equation*}
        I(t,X,x) = \int_{\min(X + ct^{-1/2}, 0)}^\infty e^{it(k-X)^2} a(x,k) f(k)\;dk
    \end{equation*}
    which can be handled using the same argument.
\end{proof}

We now use~\Cref{lem:abs-nonst-ph} to prove~\Cref{lem:improved-low-x-bdds}.
\begin{proof}[Proof of~\Cref{lem:improved-low-x-bdds}]
    To begin, we decompose
    \def\B{2A} 
    \begin{equation*}
        f = \chi\left(\frac{t^{1/2} \sqrt{H}}{2A}\right) f + (1-\chi\left(\frac{t^{1/2} \sqrt{H}}{2A}\right) f) =: f_\text{lo} + f_\text{hi}
    \end{equation*}
    where $\chi(x)$ is a bump function supported on $|x| \leq 2$ with $\chi(x) = 1$ for $x \leq 1$.  Since 
    \begin{equation}\label{eqn:tilde-f-Holder-bd}
        |\tilde{f}(k)| \lesssim |k|^{1/2} \lVert \partial_k \tilde{f} \rVert_{L^2}
    \end{equation}
    whenever $\tilde{f}(0) = 0$, we have that
    \begin{equation*}
        \lVert \tilde{f}_\text{lo} \rVert_{L^\infty} \lesssim t^{-3/4} \lVert \partial_k \tilde{f} \rVert_{L^2} 
    \end{equation*}
    so, by~\Cref{lem:H-1-infty-decay},
    \begin{equation*}
        \lVert e^{itH} f_\text{lo} \rVert_{L^\infty} \lesssim  t^{-3/4} \lVert \partial_k \tilde{f} \rVert_{L^2}
    \end{equation*}
    which is acceptable.

    For $e^{itH} f_\text{hi}$, we will appeal to~\Cref{lem:abs-nonst-ph}.  To see the relevance of this result, we write
    \begin{equation}\label{eqn:f-hi-decomp}\begin{split}
        e^{-itH} f_\text{hi}(x) =& \sum_{\nu \in \{+,-\}} \sum_{S\in \{T,R,I\}} \DFT_{\nu,S}^*(e^{itk^2}\tilde{f}_\text{hi}(k))\\
        =& \sum_{\nu \in \{+,-\}} \sum_{S\in \{T,R,I\}} \int_{\nu k > 0} a_{\nu,S}(x,k) e^{itk^2} e^{\pm ixk} \tilde{f}_\text{hi}(k)\;dk\\
        =& \sum_{\nu \in \{+,-\}}\sum_{S\in \{T,R,I\}} e^{-i\frac{x^2}{4t}} \int_{\nu k > 0} e^{it\left(k\pm\frac{x}{2t}\right)^2} a_{\nu,S}(x,k)  \tilde{f}_\text{hi}(k)\;dk\\
    \end{split}\end{equation}
    where
    \begin{align*}
        a_{+,T}(x,k) =& \frac{1}{\sqrt{2\pi}} \chi_+(x) T(k) m_+(x,k)& a_{-,T}(x,k) =& \frac{1}{\sqrt{2\pi}} \chi_-(x) T(-k)  m_-(x,-k)\\
        a_{+,R}(x,k) =& \frac{1}{\sqrt{2\pi}} \chi_-(x) R_-(k) m_-(x,k)& a_{-,T}(x,k) =& \frac{1}{\sqrt{2\pi}} \chi_+(x) R_+(-k)  m_+(x,-k)\\
        a_{+,I}(x,k) =& \frac{1}{\sqrt{2\pi}} \chi_-(x)  m_-(x,-k)& a_{-,I}(x,k) =& \frac{1}{\sqrt{2\pi}} \chi_+(x)   m_+(x,k)
    \end{align*}
    All of these terms can be seen to satisfy~\eqref{eqn:a-lem-bd} by~\Cref{lem:m-pm-basic-decay,lem:TR-deriv-bdds} and the bounds $|T|, |R_\pm| \leq 1$.  Thus, each of the integral in~\eqref{eqn:f-hi-decomp} can be written in the form~\eqref{eqn:I-abs-st-ph-def} (possibly after reflecting over $k = 0$) with $X = \pm \frac{x}{2t}$.  Since $\tilde{f}_\text{hi}$ vanishes for $|k| \leq \B t^{-1/2}$, we see that the hypotheses of~\Cref{lem:abs-nonst-ph} are satisfied if we take $|x| \leq A t^{1/2}$, and hence
    \begin{equation*}
        |e^{itH} f_\text{hi}(x)| \lesssim t^{-3/4} \lVert \partial_k \tilde{f}_\text{hi} \rVert_{L^2} \lesssim t^{-3/4} \lVert \partial_k \tilde{f} \rVert_{L^2}
    \end{equation*}
    where in the last inequality we have used~\eqref{eqn:tilde-f-Holder-bd} to obtain the bound
    \begin{equation*}\begin{split}
        \lVert \partial_k \tilde{f}_\text{hi} \rVert_{L^2} \lesssim& \left\lVert \left(1-\chi\left(\frac{t^{1/2} \sqrt{H}}{A}\right)\right) \partial_k \tilde{f} \right\rVert_{L^2} + t^{1/2}\left\lVert \chi'\left(\frac{t^{1/2} \sqrt{H}}{A}\right) \tilde{f} \right\rVert_{L^2}\\
        \lesssim& \left(1 + t^{1/2} \left\lVert |k|^{1/2} \chi'\left(\frac{t^{1/2} \sqrt{H}}{A}\right) \right\rVert_{L^2}\right)\lVert \partial_k \tilde{f} \rVert_{L^2}\\
        \lesssim& \lVert \partial_k \tilde{f} \rVert_{L^2}
    \end{split}\end{equation*}
    Combining the low and high energy estimates yields the result.
\end{proof}

Note that the conditions in~\Cref{lem:improved-local-decay} are stated in terms of the initial conditions (or equivalently, in terms of the profile $\phi(t) = e^{itH} u$).  For our purposes, it will be convenient to have conditions that guarantee that $u$ obeys the decay estimates from~\Cref{lem:H-1-infty-decay,lem:improved-local-decay}.  To facilitate this, we define
$$J_V = e^{itH} \widetilde{\mathcal{F}}^{-1} (i\partial_k)  \widetilde{\mathcal{F}} e^{-itH}$$
The previous decay results can be expressed in terms of $J_V$ as follows:
\begin{cor}
    Suppose that $V \in L^1_2$, and that $u, J_V u \in L^2_x$.  Then,
    \begin{equation}\label{eqn:J-glob-decay}
        \lVert u \rVert_{L^\infty} \lesssim t^{-1/2} \left(\lVert u \rVert_{L^2} +  \lVert J_V u \rVert_{L^2}\right)
    \end{equation}
    Moreover, if $\tilde{u}(0,t) = 0$, then
    \begin{equation}\label{eqn:J-loc-decay}
        \lVert \jBra{x}^{-1} u \rVert_{L^\infty} \lesssim t^{-3/4} \left(\lVert u \rVert_{L^2} +  \lVert J_V u \rVert_{L^2}\right)
    \end{equation}
\end{cor}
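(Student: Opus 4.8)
The plan is to pass from $u$ to its linear profile and then simply quote the dispersive estimates already recorded in~\Cref{lem:H-1-infty-decay,lem:improved-local-decay}. I would set $\phi(t) := e^{-itH}u(t)$, so that $u = e^{itH}\phi$ and, by the definition of $J_V$, $J_V u = e^{itH}\DFT^{*}\big(i\partial_k\tilde\phi\big)$. Because $\DFT$ and $e^{itH}$ are unitary on $L^2$, this yields the identities $\lVert u\rVert_{L^2_x} = \lVert\tilde\phi\rVert_{L^2_k}$ and $\lVert J_V u\rVert_{L^2_x} = \lVert\partial_k\tilde\phi\rVert_{L^2_k}$; in particular the hypothesis $J_V u(t)\in L^2_x$ says precisely that $\partial_k\tilde\phi\in L^2_k$. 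One-dimensional Gagliardo--Nirenberg, $\lVert g\rVert_{L^\infty}\lesssim\lVert g\rVert_{L^2}^{1/2}\lVert g'\rVert_{L^2}^{1/2}$, then gives $\tilde\phi\in L^\infty_k$ with $\lVert\tilde\phi\rVert_{L^\infty_k}\lesssim\lVert u\rVert_{L^2}+\lVert J_V u\rVert_{L^2}$. Finally, since $H$ acts as multiplication by $k^2$ on the distorted Fourier side, $\tilde\phi(k,t) = e^{-itk^2}\tilde u(k,t)$, so $\tilde\phi(0,t) = \tilde u(0,t)$ and the zero-frequency hypothesis~\eqref{eqn:zero-freq-hypo}, read through the continuity of $\tilde\phi$ at $k = 0$ afforded by~\Cref{lem:DFT-basic-prop}(iv), transfers verbatim to $\phi$.

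Granting these preliminaries, \eqref{eqn:J-glob-decay} should fall out of~\eqref{eqn:H-J-infty-decay} applied to the profile $\phi$: this gives $\lVert u\rVert_{L^\infty} = \lVert e^{itH}\phi\rVert_{L^\infty}\lesssim t^{-1/2}\lVert\tilde\phi\rVert_{L^\infty_k} + t^{-3/4}\lVert\partial_k\tilde\phi\rVert_{L^2_k}$, and one finishes by inserting the Gagliardo--Nirenberg bound for $\lVert\tilde\phi\rVert_{L^\infty_k}$ and using $t^{-3/4}\le t^{-1/2}$ for $t\ge 1$. In the same way, \eqref{eqn:J-loc-decay} should follow by applying~\Cref{lem:improved-local-decay} directly to $\phi$, whose distorted Fourier transform vanishes at $k = 0$ (so the hypothesis there is met for $V\in L^1_2$) and whose relevant norms are $\lVert\phi\rVert_{L^2} = \lVert u\rVert_{L^2}$ and $\lVert\partial_k\tilde\phi\rVert_{L^2} = \lVert J_V u\rVert_{L^2}$; this produces $\lVert\jBra{x}^{-1}u\rVert_{L^\infty_x}\lesssim t^{-3/4}\big(\lVert u\rVert_{L^2}+\lVert J_V u\rVert_{L^2}\big)$.

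I do not expect any genuine obstacle here: the entire argument is bookkeeping, the only points deserving a modicum of care being to track the propagator conventions hidden in the definition of $J_V$ (so that $\phi$ is genuinely the object to which~\Cref{lem:H-1-infty-decay,lem:improved-local-decay} apply) and to justify evaluating $\tilde\phi$ at $k = 0$ via the continuity statement in~\Cref{lem:DFT-basic-prop}(iv) under~\eqref{eqn:zero-freq-hypo}.
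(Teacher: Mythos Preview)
Your proposal is correct and is precisely the deduction the paper intends: the statement is presented as a corollary with no proof, and the obvious route is to set $\phi = e^{-itH}u$, translate $\lVert u\rVert_{L^2}$, $\lVert J_V u\rVert_{L^2}$ into $\lVert\tilde\phi\rVert_{L^2}$, $\lVert\partial_k\tilde\phi\rVert_{L^2}$ via unitarity, and then invoke \eqref{eqn:H-J-infty-decay} and \Cref{lem:improved-local-decay}. One cosmetic remark: you do not actually need \Cref{lem:DFT-basic-prop}(iv) (which assumes $L^1$) to make sense of $\tilde\phi(0)$, since $\tilde\phi\in H^1_k$ already forces continuity; otherwise the argument is exactly as you say.
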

We will also make use of the flat vector field
$$J_0 = e^{it\Delta} \mathcal{F}^{-1} (i\partial_\xi)  \mathcal{F} e^{-it\Delta}$$
We note that $J_0$ can be written as a local differential operator:
\begin{align*}
    J_0 =& \mathcal{F}^{-1} e^{it\xi^2} (i\partial_\xi) e^{-it\xi^2} \mathcal{F}\\
    =& \mathcal{F}^{-1} (i\partial_\xi + 2t\xi) \mathcal{F}\\
    =& x - 2it\partial_x
\end{align*}

\section{The bootstrap argument}\label{sec:bootstrap}

We now show that~\Cref{thm:main-theorem} follows from a bootstrap argument.  Let us begin by assuming that $u$ satisfies the following bootstrap hypothesis on $[1,T]$:
\begin{equation}\label{eqn:bootstrap-hypos}
    \sup_{t \in [1,T]} \left(t^{1/2} \lVert u(t) \rVert_{L^\infty} + t^{-\delta} \lVert J_V u(t) \rVert_{L^2}\right) \lesssim CM\epsilon
\end{equation}
where $C$ is some constant and $1 \ll M \ll \epsilon^{-2/3}$.  Standard local wellposedness theory guarantees that $u \in C_t H^1_x$, and the conservation of mass and energy gives us that $\sup_t\lVert u(x,t) \rVert_{H^1} \lesssim \epsilon$.  Since $L^\infty \subset H^1$, it follows that the lefthand side of~\eqref{eqn:bootstrap-hypos} is continuous in $T$.  Since
\begin{equation*}
    \lVert u(1) \rVert_{L^\infty} + \lVert J_V u(1) \rVert_{L^2} \lesssim \lVert u_* \rVert_{H^{1,1}} = \epsilon
\end{equation*}
by~\eqref{eqn:initial-cond-size}, it follows that~\eqref{eqn:bootstrap-hypos} holds for some $T > 1$.  

Assuming that~\eqref{eqn:bootstrap-hypos} holds, we will prove that
\begin{equation}\label{eqn:bootstrap-up}
    \sup_{t \in [1,T]} \left(t^{1/2} \lVert u(x,t) \rVert_{L^\infty(|x| \geq 100 t^{1/2})} + t^{-\delta} \lVert J_V u(t) \rVert_{L^2} \right) \lesssim C\epsilon
\end{equation}
where $0 < \delta < \min\left(1/4,\frac{\gamma - 2}{3}\right)$.  We first observe that~\eqref{eqn:bootstrap-up} (which, importantly, does not have a factor of $M$ on the right-hand side) implies that~\eqref{eqn:bootstrap-hypos} holds for some $T' > T$.  Indeed, we immediately have control on $\lVert J_V u(t) \rVert_{L^2}$ with an improved constant, and by~\Cref{lem:improved-low-x-bdds} this translates to a decay rate of $C\epsilon t^{-3/4+\delta} \ll C\epsilon t^{-1/2}$ in the region $|x| \leq 100 t^{1/2}$, which gives~\eqref{eqn:u-decay}.  By a standard continuity argument, we conclude that~\eqref{eqn:bootstrap-up} holds with $T = \infty$.  Moreover, the method we use to prove decay for $|x| \geq 100 t^{1/2}$ also yields~\eqref{eqn:u-asympt}, so~\Cref{thm:main-theorem} follows.

The remainder of the paper will focus on proving the bootstrap estimates~\eqref{eqn:bootstrap-up}.  We prove the weighted estimate for $J_V u$ in~\Cref{sec:weight-ests} and the pointwise estimate in~\Cref{sec:asympt}.
 
\section{Weighted estimates}\label{sec:weight-ests}

\subsection{Statement of results}

Here, we will prove that $J_V u$ can grow only slowly in time:
\begin{prop}\label{prop:weighted-est}
    Suppose $\epsilon > 0$ is sufficiently small, and that $u$ satisfies the bootstrap hypotheses~\eqref{eqn:bootstrap-hypos} on $[1,T]$.  Then,
    \begin{equation}\label{eqn:JV-weighted-bdd}
        \sup_{t \in [1,T]} \lVert J_V u \rVert_{L^2} \leq C \epsilon t^{\delta}
    \end{equation}
\end{prop}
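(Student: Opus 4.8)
The plan is to estimate the Duhamel expansion of $J_V u$ by reducing the nonlinear interactions to those of the flat cubic NLS, up to error terms localized in space that can be absorbed using improved local decay. First I would use the Duhamel formula for $J_V u$ together with the identity $J_V(|u|^2 u) = \{\text{flat-type terms}\} + \{\text{errors}\}$ that follows from the decomposition $J_V = T_{V,0} J_0 + E^1_{V,0}(t) + E^2_{V,0}(t) + E^3_{V,0}(t)$ announced in the introduction. The first step is therefore to write
\begin{equation*}
    J_V u(t) - e^{i(t-1)H} J_V u(1) = \int_1^t e^{i(t-s)H} J_V(|u|^2 u)(s)\;ds
\end{equation*}
and substitute $J_V = T_{V,0} J_0 + \sum_j E^j_{V,0}$. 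For the main term, one uses the flat cubic identity $J_0(|u|^2 u) = 2|u|^2 J_0 u - u^2 \overline{J_0 u}$ and then re-expresses $J_0 = T_{0,V} J_V + \sum_j E^j_{0,V}$, so that the leading contribution becomes $\int_1^t e^{i(t-s)H} T_{V,0}(2|u|^2 T_{0,V} J_V u - u^2 \overline{T_{0,V} J_V u})\;ds$, which is structurally identical to the flat case and is handled by placing two factors of $u$ in $L^\infty$ (using the $s^{-1/2}$ decay from \eqref{eqn:bootstrap-hypos}, i.e.\ \eqref{eqn:J-glob-decay}) and one weighted factor in $L^2$, together with the $L^2$-boundedness of $T_{V,0}$ and $T_{0,V}$, yielding a Gr\"onwall-type bound $\lVert J_V u(t)\rVert_{L^2} \lesssim \epsilon + \int_1^t \frac{(CM\epsilon)^2}{s}\lVert J_V u(s)\rVert_{L^2}\,ds$, hence $\lesssim \epsilon\, t^{C(M\epsilon)^2} \le C\epsilon\, t^\delta$ once $\epsilon$ is small enough that $C(M\epsilon)^2 < \delta$ (this is why the hypothesis $M \ll \epsilon^{-2/3}$ is imposed).

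Next I would treat the error terms. For the $E^1$ contributions, since $E^1_{V,0}(t)$ and $E^1_{0,V}(t)$ carry no growth in time, the corresponding terms $\int_1^t e^{i(t-s)H}(\cdots)\,ds$ are estimated by brute force using unitarity of $e^{i(t-s)H}$ on $L^2$ and putting all three factors of $u$ in $L^\infty$ via \eqref{eqn:J-glob-decay}, which gives an integrand $\lesssim s^{-3/2}(CM\epsilon)^3$ — integrable, hence bounded uniformly. For the $E^2$ contributions, one uses $\lVert E^2_{V,0}(t) w\rVert_{L^2} \lesssim t\lVert \jBra{x}^{3/2-2\gamma+} w\rVert_{L^2}$ together with the improved local decay $\lVert \jBra{x}^{-1} u\rVert_{L^\infty} \lesssim t^{-3/4}(\lVert u\rVert_{L^2} + \lVert J_V u\rVert_{L^2})$ from \eqref{eqn:J-loc-decay}: the weight $\jBra{x}^{3/2-2\gamma+}$ is strong enough (using $\gamma>2$) to convert the extra factor of $s$ into net time decay when combined with the $s^{-3/4}$ local-decay gain on one or two factors of $u$. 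For the $E^3$ contributions, one combines the bound $\lVert E^3_{V,0}(s) w\rVert_{L^{1+}_x} \lesssim \lVert s \jBra{x}^{-\gamma} w\rVert_{L^{1+}}$ with the dual Strichartz estimate $\lVert \int_1^t e^{i(t-s)H} F\,ds\rVert_{L^2} \lesssim \lVert F\rVert_{L^{4/3+}_s L^{1+}_x}$ from \Cref{lem:H-strichartz}: after inserting local decay for the factors of $u$, the resulting $L^{4/3+}_s$ norm in time is controlled because $\gamma>2$ makes the weight integrable against the slowly growing $\lVert J_V u\rVert_{L^2}$ and the $s^{-3/4}$ gains. One also has to treat the cross terms in which the flat cubic identity is applied but $J_0$ is only partially converted, and the fully "error" terms where $J_V(|u|^2 u)$ is hit by an $E^j_{V,0}$ directly — but these are all at least as good as the cases above since they carry strictly more localization.

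The main obstacle is the bookkeeping for the $E^2$ and $E^3$ error terms: one must verify that the combination of the allowed time growth in the error bounds, the $s^{-3/4}$ (rather than $s^{-1/2}$) decay available only from local decay, and the slow growth $s^\delta$ of $\lVert J_V u\rVert_{L^2}$ itself still closes — i.e.\ that the relevant time integrals like $\int_1^t s \cdot s^{-3/4} \cdot s^{-3/4} \cdot s^{-1/2} \cdot s^\delta\,ds$ (and their Strichartz analogues, where the time-integrability threshold is $4/3+$ rather than $1$) remain bounded by $C\epsilon\, t^\delta$ with a constant independent of $M$. This forces the precise range $0 < \delta < \min(1/4, (\gamma-2)/3)$, and one should check that the power of $\epsilon$ in each error term is at least cubic (not quadratic), so that those contributions are $O(\epsilon^3 \cdot (\text{bounded in }t))$ and hence negligible compared with the $C\epsilon\, t^\delta$ target; the quadratic-in-$\epsilon$ terms arise only from the genuinely flat main term, where the Gr\"onwall argument absorbs them into the exponent $\delta$. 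Finally, I would record the explicit rates $\epsilon^3 t^{1-\gamma/2}$ etc.\ mentioned in the remark, which drop out of the $E^2$ and $E^3$ estimates, and conclude \eqref{eqn:JV-weighted-bdd}.
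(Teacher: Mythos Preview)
Your approach is essentially the paper's: decompose $J_V(|u|^2u)$ via $J_V=T_{V,0}J_0+\sum_j E^j_{V,0}$, apply the flat cubic identity, and then convert $J_0$ back via $J_0=T_{0,V}J_V+\sum_j E^j_{0,V}$; the main term is controlled by $\|u\|_{L^\infty}^2\|J_Vu\|_{L^2}\lesssim (M\epsilon)^2 s^{-1}\cdot M\epsilon s^{\delta}$, the $E^2$ terms by improved local decay, and the $E^3$ terms by Strichartz.

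One point to correct: your treatment of the $E^1$ terms is not quite right. You cannot place all three factors of $u$ in $L^\infty$ and still obtain an $L^2$ bound on the integrand; the operator $E^1_{V,0}$ maps $L^2\to L^2$, so one factor must stay in $L^2$. The actual estimate is
\[
\lVert E^1_{V,0}(|u|^2u)\rVert_{L^2}\ \lesssim\ \||u|^2u\|_{L^2}\ \lesssim\ \|u\|_{L^\infty}^2\|u\|_{L^2}\ \lesssim\ (M\epsilon)^2 s^{-1}\cdot \epsilon,
\]
which is \emph{not} integrable in $s$; it yields $M^2\epsilon^3\ln t\lesssim M^2\epsilon^3 t^{\delta}$, acceptable since $M^2\epsilon^2\ll 1$. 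The paper simply says these terms ``satisfy the same bounds'' as the main term, i.e.\ they are handled exactly as the leading contribution rather than by a separate integrable-in-time mechanism. A second cosmetic difference: the paper does not run a Gr\"onwall argument but substitutes the bootstrap hypothesis $\|J_Vu\|_{L^2}\lesssim M\epsilon s^{\delta}$ directly into the main term, obtaining $M^3\epsilon^3 t^{\delta}\ll \epsilon t^{\delta}$; your Gr\"onwall variant is equally valid. Finally, the explicit rates $\epsilon^3 t^{1-\gamma/2}$ you mention at the end belong to the asymptotic expansion in Section~\ref{sec:asympt}, not to this weighted estimate.
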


The main difficulty in proving weighted estimates for~\eqref{eqn:main-eqn} is that the potential effectively `smears out' the resonances from the free problem by allowing a much greater range of multilinear interactions in distorted frequency space.  These new interactions are ultimately lower order, but we can only take advantage of this fact by carefully separating them from the leading-order interactions.  
Our approach consists in showing that we can approximate $J_V$ by $J_0$ \textit{as operators} (and vice versa), up to errors that are lower order in our nonlinear application.
\begin{thm}\label{thm:J0-to-JV}
    Suppose that $V \in L^1_\gamma$ with $\gamma > 2$.  Then, there exist operators $T_{0,V}$, and $E^j_{0,V}(t)$, $j = 1,2,3$, such that
    \begin{equation}\label{eqn:J0-JV-decomp}
        J_0 = T_{0,V} J_V + E^1_{0,V}(t) + E^2_{0,V}(t) + E^3_{0,V}(t)
    \end{equation}
    where
    \begin{equation}\label{eqn:T-0V-expr}
        T_{0,V} = \DFT_{T}^{*}\DFT + \DFT_{I}^{*}\DFT -\DFT_{R}^{*}\DFT 
    \end{equation}
    is time independent and bounded on $L^2$.  Moreover, if $\tilde{w}(0) = 0$, then the errors $E^j_{0,V}$ satisfy
    \begin{equation}\label{eqn:J0-to-JV-errs}\begin{split}
        \lVert E^1_{0,V}(t) w \rVert_{L^2 \to L^2} \lesssim& \lVert w \rVert_{L^2}\\
        \lVert \jBra{x}^{3/2 - 2\gamma - a} E^2_{0,V}(t) w\rVert_{L^2} \lesssim& t^{1/4}\lVert J_V w \rVert_{L^2}\\
        \lVert \jBra{x}^{\gamma} E^3_{0,V}(t) w \rVert_{L^p} \lesssim& t \lVert w \rVert_{L^p}
    \end{split}\end{equation}
    for any $p \in (1,\infty)$ and $a > 0$ provided that $\tilde{w}(0) = 0$.

\end{thm}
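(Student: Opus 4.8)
The plan is to reduce the whole statement to the elementary fact that $J_0$ and $J_V$ are the same operator written in two Fourier variables, and then to track the discrepancy between the distorted and flat transforms term by term. Using the intertwining identity $e^{itH} = \DFT^{*}e^{itk^2}\DFT$ one computes, exactly as one gets $J_0 = \cF^{*}(i\partial_\xi + 2t\xi)\cF = x - 2it\partial_x$, that $J_V = \DFT^{*}(i\partial_k + 2tk)\DFT$. Hence it is enough to evaluate $\DFT(xw)$ and $\DFT(\partial_x w)$ and compare with $(i\partial_k + 2tk)\DFT w$. To do this I would split $\DFT = \DFT_T + \DFT_R + \DFT_I$ into its transmitted, reflected, and incident parts, and split each of the six partial transforms into a \emph{flat} piece plus a \emph{localized remainder} by writing $\overline{m_\pm(x,\pm k)} = 1 + (\overline{m_\pm(x,\pm k)} - 1)$, exactly as in the proof of \Cref{lem:pDFT-bddness}: for example $\DFT_{+,T}w = \bbOne_{k>0}\overline{T(k)}\,\widehat{\chi_+ w} + \bbOne_{k>0}\overline{T(k)}\int \mathcal{K}_{+,T}(x,k)w(x)\,dx$, where $\mathcal{K}_{+,T}$ decays in $x$ by \Cref{lem:m-pm-basic-decay}. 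The only structural point to note is that, because $\overline{f_+(x,k)} = e^{-ixk}\overline{m_+(x,k)}$ and $\overline{f_-(x,-k)} = e^{-ixk}\overline{m_-(x,-k)}$ while $\overline{f_-(x,k)} = e^{+ixk}\overline{m_-(x,k)}$, the transmitted and incident flat kernels carry the phase $e^{-ixk}$ but the reflected one carries $e^{+ixk}$.

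Now apply $x - 2it\partial_x$ and distribute it onto the kernels. On a flat kernel $e^{-ixk}$, multiplication by $x$ produces $i\partial_k$ and $\partial_x$ produces $-ik$, so $x - 2it\partial_x$ reproduces $+(i\partial_k + 2tk)$ on the transmitted and incident flat pieces; on the reflected kernel $e^{+ixk}$ the signs flip and it reproduces $-(i\partial_k + 2tk)$ — this is precisely the origin of the signs in $T_{0,V} = \DFT_T^{*}\DFT + \DFT_I^{*}\DFT - \DFT_R^{*}\DFT$. Commuting the bounded multipliers $T(D),R_\pm(D)$ past $i\partial_k$ costs only an $L^2$-bounded error since $|T'|,|R_\pm'| \lesssim \jBra{k}^{-1}$ by \Cref{lem:TR-deriv-bdds}, and reassembling $\DFT^{*}$ on the outside yields $T_{0,V}J_V$ (with $T_{0,V}$ bounded on $L^2$ by \Cref{lem:pDFT-bddness}) plus error terms, which I would classify as follows. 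Terms where $x$ or $\partial_x$ hits a cutoff $\chi_\pm$ are compactly supported. Terms where $x$ hits a localized remainder, i.e. involving $\overline{\partial_k m_\pm}$, carry no time factor and are $L^2$-bounded by \Cref{lem:m-pm-basic-decay}; these form $E^1_{0,V}$. Terms where $-2it\partial_x$ hits a localized remainder, i.e. involving $\overline{\partial_x m_\pm}$, are split using \eqref{eqn:dx-m-ident}: the first term there decays like $\jBra{x}^{\theta - 2\gamma}$ in $x$ and, with its factor $-2it$, goes into $E^2_{0,V}$, while the second term $\int_x^\infty e^{2ik(x-y)}V(y)\,dy$ depends on $k$ only through a complex exponential, so it re-expresses as a flat Fourier-side operator with kernel decaying like $\jBra{x}^{-\gamma}$ and, with its factor $-2it$, goes into $E^3_{0,V}$; the $-ik$ term regenerated when $\partial_x$ hits $e^{-ixk}$ against the remainder also lands in $E^3_{0,V}$, since $|k|\,|m_\pm - 1| \lesssim \jBra{x}^{\theta-\gamma}$ kills the $k$-growth. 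The $L^p$ bound for $E^3_{0,V}$ then follows from $L^p \to L^p$ boundedness of $\cF^{*}\DFT_{\pm,A}$ (\Cref{lem:pDFT-bddness}, which needs $\gamma > 2$) and of $\waveop$ (\Cref{lem:wave-op-bddness}).

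Two points need care. First, every integration by parts in $k$ — used both to commute $i\partial_k$ past the multipliers and cutoffs and to assemble the flat vector field — produces boundary contributions at $k = 0$; here the hypothesis $\tilde w(0) = 0$, Hardy's inequality, and the resulting vanishing $|\tilde w(k)| \lesssim |k|^{1/2}\lVert \partial_k\tilde w\rVert_{L^2}$ make these harmless, and also permit dividing by the transmission coefficient near $k = 0$ (recall $T(0)=0$ for generic $V$) when passing between the flat pieces and $\DFT$. Second, and this is the main obstacle, the naive estimate for $E^2_{0,V}$ only gives $\lVert \jBra{x}^{3/2-2\gamma-a}E^2_{0,V}(t)w\rVert_{L^2} \lesssim t\,\lVert \jBra{x}^{3/2 - 2\gamma+}w\rVert_{L^2}$: the factor $t$ comes directly from the $-2it\partial_x$ in $J_0$, and the $|k|^{-1+\theta}$ low-frequency singularity of $\partial_x m_\pm$ forces the strong spatial weight. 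To reach the stated $t^{1/4}\lVert J_V w\rVert_{L^2}$ one must observe that this error involves $w$ only through a factor that is localized near the origin (in the averaged sense coming from $\gamma > 2$), so that — using $\tilde w(0) = 0$ and the identity $\lVert \partial_k \widetilde{e^{-itH}w}\rVert_{L^2} = \lVert J_V w\rVert_{L^2}$ — the improved local decay of \Cref{lem:improved-local-decay,lem:improved-low-x-bdds} applies and converts $t\,\lVert \jBra{x}^{3/2-2\gamma+}w\rVert_{L^2}$ into $t \cdot t^{-3/4}\lVert J_V w\rVert_{L^2} = t^{1/4}\lVert J_V w\rVert_{L^2}$. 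Finally, the companion decomposition of $J_V$ in terms of $J_0$ is proved by the same computation with the roles of $\DFT$ and $\cF$ interchanged, and the duality claims for $E^1$ and $E^3$ follow by inspecting the kernels.
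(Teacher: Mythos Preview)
Your overall architecture is right and matches the paper: apply $J_0 = x - 2it\partial_x$ to $\DFT^*[e^{itk^2}\tilde\phi]$, split into the six partial transforms, use the identity $(x-2it\partial_x)e^{i(\pm kx + tk^2)} = \mp i\partial_k e^{i(\pm kx + tk^2)}$ to produce $T_{0,V}J_V$ with the correct signs, and sort the leftover terms by whether $\partial_k$ or $t\partial_x$ lands on $T,R_\pm,\chi_\pm,m_\pm$. The $E^1$ and $E^3$ identifications are essentially what the paper does (the paper keeps the full $m_\pm$ through the integration by parts rather than splitting off $m_\pm-1$ first, which has the side benefit that your ``$-ik$ regenerated'' term never appears; as you wrote it, that term does not obviously fit the $E^3$ mapping bound).

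The genuine gap is your mechanism for the $E^2_{0,V}$ bound. You propose a ``naive'' estimate $\lVert \jBra{x}^{3/2-2\gamma-a}E^2_{0,V}(t)w\rVert_{L^2}\lesssim t\lVert \jBra{x}^{3/2-2\gamma+}w\rVert_{L^2}$ and then invoke improved local decay on $w$ to trade $t$ for $t^{1/4}$. But $E^2_{0,V}w$ is built from $t\int_{k>0} T(k)\chi_+(x)\bigl(\int_x^\infty V(y)(m_+(y,k)-1)e^{ik(x-2y)}dy\bigr)\tilde w(k)\,dk$: the input enters only through $\tilde w(k)$, not through a spatially localized $w(x)$, so there is no naive bound of the form you wrote. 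You have effectively transplanted the structure of $E^2_{V,0}$ (where the \emph{input} is localized and the bound really is $t\lVert \jBra{x}^{3/2+a-2\gamma}w\rVert_{L^2}$, to be combined with local decay \emph{in the application}) onto $E^2_{0,V}$, where the \emph{output} is localized and the stated bound is $t^{1/4}\lVert J_Vw\rVert_{L^2}$.

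What the paper actually does for $E^2_{0,V}$ is a Fourier-side argument exploiting the dispersive phase: split at $|k|\sim t^{-1/2}$; on $|k|\lesssim t^{-1/2}$ use $|\tilde w(k)|=|\tilde\phi(k)|\lesssim|k|^{1/2}\lVert\partial_k\tilde\phi\rVert_{L^2}=|k|^{1/2}\lVert J_Vw\rVert_{L^2}$ together with the small measure of this set to turn $t$ into $t^{1/4}$ directly; on $|k|\gtrsim t^{-1/2}$ integrate by parts via $e^{itk^2}=\tfrac{1}{2itk}\partial_k e^{itk^2}$, which cancels the factor $t$ at the cost of a $1/k$, and then control $\tilde\phi(k)/k$ by Hardy. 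This is the step your sketch is missing, and without it the $t^{1/4}$ cannot be reached.
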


\begin{thm}\label{thm:JV-to-J0}
    Suppose that $V \in L^1_\gamma$ with $\gamma > 2$.  Then, there exist operators $T_{V,0}$, and $E^j_{0,V}(t)$, $j = 1,2,3$, such that
    \begin{equation}\label{eqn:JV-J0-decomp}
        J_V = T_{V,0} J_0 + E^1_{V,0}(t) + E^2_{V,0}(t) + E^3_{V,0}(t)
    \end{equation}
    where 
    \begin{equation}\label{eqn:T-V0-expr}
        T_{V,0} = \DFT^{-1}(\DFT_{T} + \DFT_I - \DFT_R)
    \end{equation} is bounded on $L^p$ for $p \in (1,2]$ and the errors $E^j_{0,V}$ satisfy
    \begin{equation}\label{eqn:JV-to-J0-errs}\begin{split}
        \lVert E^1_{V,0}(t) w \rVert_{L^2 \to L^2} \lesssim& \lVert w \rVert_{L^2}\\
        \lVert E^2_{V,0}(t) w\rVert_{L^2} \lesssim& t\lVert \jBra{x}^{3/2 + a - 2\gamma} w \rVert_{L^2}\\
        \lVert  E^3_{V,0}(t) w \rVert_{L^p} \lesssim& t \lVert \jBra{x}^{-\gamma} w \rVert_{L^p}
    \end{split}\end{equation}
    for any $p \in (1,\infty)$ and $a > 0$.
\end{thm}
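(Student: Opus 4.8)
The plan is to realize both vector fields as ``Fourier multipliers'' in their respective frequency variables and then to exploit the fact that, away from the possible resonance at $k=0$, the distorted eigenfunctions $\psi(x,k)$ behave like the flat exponentials $e^{ixk}$ up to the scalar factors $T(k),R_\pm(k)$ and the spatially localized correctors $m_\pm(x,k)-1$. Since $\DFT$ diagonalizes $H$ one has the operator identities $J_V=\DFT^*(i\partial_k+2tk)\DFT$ and $J_0=\cF^*(i\partial_\xi+2t\xi)\cF=x-2it\partial_x$. Differentiating $\DFT f(k)=\int\overline{\psi(x,k)}f(x)\,dx$ directly in $k$ (no integration by parts, hence no boundary contribution at $k=0$) gives $(i\partial_k+2tk)\DFT f(k)=\int\overline{\big[(-i\partial_k+2tk)\psi\big](x,k)}\,f(x)\,dx$.

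Next I would decompose $\psi$ into its incident, transmitted, and reflected parts $\psi=\psi^I+\psi^T+\psi^R$ as in \eqref{eqn:partial-DFT-defs}, writing $\psi^I=(2\pi)^{-1/2}a^I(x,k)e^{ixk}$, $\psi^T=(2\pi)^{-1/2}a^T(x,k)e^{ixk}$ and $\psi^R=(2\pi)^{-1/2}a^R(x,k)e^{-ixk}$, where the amplitudes $a^\bullet$ are built from $\chi_\pm$, $T$, $R_\pm$ and $m_\pm$. The algebraic heart of the argument is the elementary identity that on the $e^{ixk}$-pieces $(-i\partial_k+2tk)\psi^I=J_0\psi^I+(2\pi)^{-1/2}\big(2it\,\partial_x a^I-i\,\partial_k a^I\big)e^{ixk}$ (and likewise for $\psi^T$), while on the reflected piece, because the oscillation is $e^{-ixk}$, the $x$ and $2tk$ terms come with opposite signs and one gets the \emph{sign flip} $(-i\partial_k+2tk)\psi^R=-J_0\psi^R+(2\pi)^{-1/2}\big(-2it\,\partial_x a^R-i\,\partial_k a^R\big)e^{-ixk}$; this is exactly what forces the combination $\DFT_T+\DFT_I-\DFT_R$ in $T_{V,0}$. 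Moving the $x$-derivatives in $J_0$ onto $f$ by integration by parts (with vanishing boundary terms for Schwartz $f$, then extending by density) turns $\int\overline{J_0\psi^\bullet(x,k)}\,f(x)\,dx$ into $\DFT_\bullet(J_0 f)(k)$, so that applying $\DFT^*$ produces exactly $J_V=\DFT^*(\DFT_I+\DFT_T-\DFT_R)J_0+\mathcal E=T_{V,0}J_0+\mathcal E$, where $\mathcal E$ is the operator $f\mapsto\DFT^*\big[\int\overline{(\mathcal E^I+\mathcal E^T+\mathcal E^R)(x,k)}\,f(x)\,dx\big]$ and $\mathcal E^\bullet$ are the corrector kernels above. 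That $T_{V,0}$ is bounded on $L^p$ for $p\in(1,2]$ follows by writing $T_{V,0}=\waveop\cF^*(\DFT_I+\DFT_T-\DFT_R)$ and combining \Cref{lem:pDFT-bddness} (which gives $\cF^*\DFT_{\pm,A}\colon L^p\to L^p$ for $1<p<2$ when $\gamma>2$) with the $L^p$-boundedness of $\waveop$ from \Cref{lem:wave-op-bddness}; the case $p=2$ is immediate from unitarity.

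It remains to split $\mathcal E=E^1_{V,0}+E^2_{V,0}+E^3_{V,0}$ according to the structure of the kernels $\mathcal E^\bullet=(2\pi)^{-1/2}(\pm 2it\,\partial_x a^\bullet-i\,\partial_k a^\bullet)e^{\pm ixk}$. The terms coming from $-i\,\partial_k a^\bullet$ are time independent: $\partial_k a^\bullet$ involves $T'(k),R'_\pm(k)$ (bounded, with $\jBra{k}^{-1}$ decay and integrable Fourier transform, by \Cref{lem:TR-deriv-bdds}) and $\partial_k m_\pm(x,k)$ (spatially localized with a mild $|k|^{-(1-\theta)}$ singularity, by \Cref{lem:m-pm-basic-decay}), so after composition with $\DFT^*$ they give an $L^2$-bounded operator $E^1_{V,0}$ — the multiplier parts controlled via $L^p$ boundedness of $T'(D),R'_\pm(D)$ and the localized parts by a Hilbert--Schmidt estimate, the admissible range $\theta\in(1/2,\gamma-3/2)$ being exactly what needs $\gamma>2$ and producing the harmless loss $\jBra{x}^a$, $a>0$. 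The terms coming from $2it\,\partial_x a^\bullet$ carry a factor of $t$ and are handled with the identities \eqref{eqn:dx-m-ident} and \eqref{eqn:dx-m-minus-ident}: the part of $\partial_x m_\pm$ containing $V\cdot(m_\pm-1)$, together with the $\partial_x\chi_\pm$ terms, is doubly localized in space (with spatial decay $\jBra{x'}^{\theta-2\gamma}$ from $\mathcal{W}_\pm^{\theta}$ and $\mathcal{W}_\pm^0$), and its kernel, after using the oscillation $e^{\pm ix'k}$, obeys a $t$-weighted Hilbert--Schmidt/Schur estimate giving the $E^2_{V,0}$ bound with weight $\jBra{x}^{3/2+a-2\gamma}$; the remaining part $\int e^{2ik(x-y)}V(y)\,dy$ of $\partial_x m_\pm$ depends on $k$ only through the exponential, so the corresponding operator $\DFT^*\circ(\cdots)$ reduces, modulo the residual $m_\pm-1$ correctors, to convolution against an $L^1$ kernel times the weight $\jBra{x}^{-\gamma}$ carried by $\mathcal{W}_\pm^0$, yielding the $L^p$ bound for $E^3_{V,0}$ via Young's inequality and \Cref{lem:wave-op-bddness}. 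I expect the main difficulty to be precisely this bookkeeping in the error terms: simultaneously tracking the factors of $t$, the $k=0$ singularities of $\partial_k m_\pm$ and $\partial_x m_\pm$, and the spatial weights, in such a way that $E^3_{V,0}$ genuinely lands in $L^p$ rather than only in $L^2$; the decomposition \eqref{eqn:dx-m-ident}, isolating a true convolution kernel from a doubly localized remainder, is what makes this possible.
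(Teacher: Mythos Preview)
Your proposal is correct and follows essentially the same approach as the paper: you derive the identity $\DFT J_V w = (\DFT_T+\DFT_I-\DFT_R)J_0 w + \text{errors}$ via the same sign-flip mechanism on the reflected piece, prove $L^p$-boundedness of $T_{V,0}$ by the same factorization $\waveop\,\cF^*(\DFT_T+\DFT_I-\DFT_R)$, and split the errors into the $\partial_k a^\bullet$ part ($E^1$) and the $\partial_x a^\bullet$ part, the latter further split via \eqref{eqn:dx-m-ident} into the doubly localized piece ($E^2$) and the convolution piece ($E^3$). The paper's own proof is terser, invoking duality with the corresponding terms from \Cref{thm:J0-to-JV} rather than recomputing the $E^1$ and $E^3$ bounds, but the underlying decomposition and estimates are identical.
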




\subsection{Slow growth of the weighted norm}

Assuming for now that these two results hold, let us show that they imply slow growth of $J_V u$ given in~\Cref{prop:weighted-est}.
\begin{proof}[Proof of~\Cref{prop:weighted-est}]
    First, we note that the hypotheses of~\Cref{thm:J0-to-JV,thm:JV-to-J0} are satisfied by~\eqref{eqn:V-decay-assumption} and that $\tilde{w}(0) = 0$ by~\eqref{eqn:zero-freq-hypo}.  Using~\eqref{eqn:main-eqn} and recalling that $J_V$ commutes with $H$, we see that
    \begin{equation*}
        J_V u(t) =  e^{itH}J_V u_*  - i \int_1^t e^{i(t-s)H} J_V (|u|^2 u)(s)\;ds
    \end{equation*}
    Applying~\Cref{thm:J0-to-JV,thm:JV-to-J0} and recalling that $J_0$ satisfies the Leibniz-like identity
    \begin{equation*}
        J_0 (|u|^2u) = 2 |u|^2 J_0 u - u^2 \overline{J_0 u}
    \end{equation*}
    we can expand the Duhamel term to find that
    \begin{subequations}\begin{align}
        \int_1^t e^{i(t-s)H} J_V (|u|^2 u)(s)\;ds =& \int_1^t e^{i(t-s)H} T_{V,0} (2|u|^2 T_{0,V} J_V u(s) - u^2 \overline{T_{0,V} J_V u}(s) \;ds \label{eqn:duhamel-exp-main}\\
        &+ \int_1^t e^{i(t-s)H} T_{V,0} |u|^2 E^1_{0,V}(s) u(s)\;ds \label{eqn:duhamel-exp-inner-E-1}\\
        &+ \int_1^t e^{i(t-s)H} T_{V,0} |u|^2 E^2_{0,V}(s) u(s)\;ds \label{eqn:duhamel-exp-inner-E-2}\\
        &+ \int_1^t e^{i(t-s)H} T_{V,0} |u|^2 E^3_{0,V}(s) u(s)\;ds \label{eqn:duhamel-exp-inner-E-3}\\
        &+ \int_1^t e^{i(t-s)H} E^1_{V,0} |u|^2 u(s)\;ds \label{eqn:duhamel-exp-outer-E-1}\\
        &+ \int_1^t e^{i(t-s)H} E^2_{V,0} |u|^2 u(s)\;ds \label{eqn:duhamel-exp-outer-E-2}\\
        &+ \int_1^t e^{i(t-s)H} E^3_{V,0} |u|^2 u(s)\;ds \label{eqn:duhamel-exp-outer-E-3}
    \end{align}
    \end{subequations}
    Using the $L^2$ boundedness of $T_{V,0}$ and $T_{0,V}$, we immediately see that the leading term~\eqref{eqn:duhamel-exp-main} can be estimated as
    \begin{equation*}\begin{split}
        \lVert \eqref{eqn:duhamel-exp-main} \rVert_{L^2} \lesssim& \int_1^t \lVert u (s) \rVert_{L^\infty}^2 \lVert J_V u (s) \rVert_{L^2}\;ds
        \lesssim M^3\epsilon^3 \int_1^t s^{-1+\delta}\;ds
        \lesssim M^3\epsilon^3 t^{\delta}
    \end{split}\end{equation*}
    which is consistent with the bootstrap estimate~\eqref{eqn:JV-weighted-bdd}.  A similar argument using the bounds for $E^1_{0,V}$ and $E^1_{V,0}$ shows that~\eqref{eqn:duhamel-exp-inner-E-1} and~\eqref{eqn:duhamel-exp-outer-E-1} satisfy the same bounds.
    For~\eqref{eqn:duhamel-exp-outer-E-2}, we use the improved local decay from~\Cref{lem:improved-local-decay} to obtain the bound
    \begin{equation*}\begin{split}
        \lVert \eqref{eqn:duhamel-exp-outer-E-2} \rVert_{L^2} \lesssim& \int_1^t s\lVert \jBra{x}^{3/2 + a - 2\gamma} |u|^2u (s) \rVert_{L^2}\;ds\\
        \lesssim& \int_1^t s\lVert \jBra{x}^{-2-\mu} |u|^2u (s) \rVert_{L^\infty}\;ds\\
        \lesssim& \int_1^t s \lVert \jBra{x}^{-1} u \rVert_{L^\infty}^2 \lVert \jBra{x}^{-\mu} u \rVert_{L^\infty}\;ds\\
        \lesssim& \int_1^t s^{-1-\mu/2} (\lVert J_V u \rVert_{L^2} + \lVert u \rVert_{L^2})^3\;ds\\
        \lesssim& M^3\epsilon^3
    \end{split}\end{equation*}
    where we have chosen $\mu \in (6\delta, 2\gamma - 4 - a)$ (note that this range is nonempty for $a$ sufficiently small).  Similarly, we have that
    \begin{equation*}\begin{split}
        \lVert \eqref{eqn:duhamel-exp-inner-E-2} \rVert_{L^2} \lesssim& \int_1^t s^{1/4}\lVert \jBra{x}^{3/2 + a - 2\gamma} |u|^2 \rVert_{L^\infty} \lVert J_V u (s) \rVert_{L^2}\;ds\\
        \lesssim& \int_1^t s^{1/4} \lVert \jBra{x}^{-1} u \rVert_{L^\infty}^2 \lVert J_V u \rVert_{L^2}\;ds\\
        \lesssim& M^3\epsilon^3
    \end{split}\end{equation*}
    For~\eqref{eqn:duhamel-exp-outer-E-3}, we use the Strichartz estimates given in~\Cref{lem:H-strichartz} to find that
    \begin{equation*}\begin{split}
        \lVert \eqref{eqn:duhamel-exp-outer-E-3} \rVert_{L^2} \lesssim& \left\lVert s \lVert \jBra{x}^{-\gamma} |u|^2u \rVert_{L^{r'}_x} \right\rVert_{L^{q'}(1,t;)}\\
        \lesssim& \left\lVert s \lVert \jBra{x}^{1/r' + a} \rVert_{L^{r'}_x} \lVert \jBra{x}^{\gamma - 1/r' - a} |u|^2 u \rVert_{L^\infty}\right\rVert_{L^{q'}(0,t)}\\
        \lesssim& M^3\epsilon^3 \lVert s^{-1/2 + (\delta - 1/4) (\gamma -1/r' - a)}\rVert_{L^{q'}(1,t)}
    \end{split}\end{equation*}
    where $a > 0$ is arbitrary and $(q,r)$ satisfies~\eqref{eqn:strich-admiss-cond}.  For $\delta < 1/4$, it is always possible to find and admissible $(q,r)$ such that
    \begin{equation}\label{eqn:exponent-cond}
        \frac{1}{q'} < \frac{1}{2} + \left(\frac{1}{4} - \delta\right) \left(\gamma - \frac{1}{r'} - a\right)
    \end{equation}
    so
    \begin{equation*}
        \lVert \eqref{eqn:duhamel-exp-outer-E-3} \rVert_{L^2} \lesssim M^3 \epsilon^3
    \end{equation*}
    A similar argument shows that
    \begin{equation*}\begin{split}
        \lVert \eqref{eqn:duhamel-exp-inner-E-3} \rVert_{L^2} \lesssim& \left\lVert s \lVert |u|^2  E_{0,V}^3 u (s) \rVert_{L^{r'}_x}\right\rVert_{L^{q'}(1,t)}\\
        \lesssim& \left\lVert s \lVert \jBra{x}^{-\gamma} |u|^2 \rVert_{L^{\rho}_x} \lVert \jBra{x}^{\gamma} E_{0,V}^3 u \rVert_{L^{R}_x} \right\rVert_{L^{q'}(1,t)}\\
        \lesssim& M^3\epsilon^3\left\lVert s^{-\frac{1}{2} + \frac{1}{R} - \left(\frac{1}{4} - \delta\right) \left( \gamma - \frac{1}{\rho} - a \right)} \right\rVert_{L^{q'}(1,t)}  
    \end{split}\end{equation*}
    where $\frac{1}{R} + \frac{1}{\rho} = \frac{1}{r'}$.  Choosing $R$ sufficiently large and using~\eqref{eqn:exponent-cond}, we find that $\lVert \eqref{eqn:duhamel-exp-inner-E-3} \rVert_{L^2} \lesssim M^3\epsilon^3$, which completes the proof of~\Cref{prop:weighted-est}.
\end{proof}

Thus, it only remains to prove~\Cref{thm:J0-to-JV,thm:JV-to-J0}.  We will prove~\Cref{thm:J0-to-JV} in~\Cref{sec:J0-to-JV} and~\Cref{thm:JV-to-J0} in~\Cref{sec:JV-to-J0}.

\subsection{Control of \texorpdfstring{$J_0$}{J0} in terms of \texorpdfstring{$J_V$}{JV}}\label{sec:J0-to-JV}
\begin{proof}[Proof of~\Cref{thm:J0-to-JV}]
    If $w = e^{itH} \phi$, then based on the 
    inversion formula from~\Cref{lem:DFT-basic-prop}, we have
    \begin{align*}
        J_0 w =& (x - 2it\partial_x) \DFT^*[e^{itk^2}\tilde{\phi}(k)](x)\\
              =& (x - 2it\partial_x) (\DFT^*_{+,T} + \DFT^*_{-,T} + \DFT^{*}_{+,R} + \DFT^{*}_{-,R} + \DFT^{*}_{+,I} + \DFT^{*}_{-,I})[e^{-itk^2}\tilde{\phi}(k)](x)
    \end{align*}
    Thus, it suffices to examine the action of $J_0$ on the transmitted, reflected and incident partial Fourier transforms.  For simplicity, we will only consider the partial DFT's over positive wavenumbers: The vanishing of the DFT at $k = 0$ ensures that we do not pick up boundary terms when integrating by parts, and the contribution from $k < 0$ is similar.  We will also focus primarily on the transmitted wave and then explain the modifications necessary to handle the reflected and incident waves.  We compute that
    \begin{align*}
        J_0 \DFT^{*}_{+,T} \tilde{w} 
        =& \frac{1}{\sqrt{2\pi}} \int_{\bbR^+} T(k) \chi_+(x) m_+(x,k) \left[(x - 2it\partial_x)e^{i(kx + tk^2)}\right] \tilde{\phi}(k)\;dk\\
        &- \frac{2it}{\sqrt{2\pi}} \int_{\bbR^+} T(k) \partial_x(\chi_+(x) m_+(x,k)) e^{i(kx + tk^2)} \tilde{\phi}(k)\;dk
    \end{align*}
    Focusing on the first term for a moment, we observe that
    \begin{equation}\label{eqn:fwd-wave-IBP}
        (x - 2it\partial_x)e^{i(kx + tk^2)}  = (x + 2tk) e^{i(kx + tk^2)}  = -i\partial_k e^{i(kx + tk^2)}
    \end{equation}
    Thus, we can integrate by parts to find that
    \begin{equation}\label{eqn:J0-JV-T-split}\begin{split}
        J_0 \DFT^{*}_{+,T} w =& \frac{1}{\sqrt{2\pi}} \int_{\bbR^+} \chi_+(x) T(k) m_+(x,k) e^{i(kx + tk^2)} (i\partial_k \tilde{\phi}(k))\;dk\\ 
        & + \frac{i}{\sqrt{2\pi}} \int_{\bbR^+} \partial_k (T(k) \chi_+(x) m_+(x,k)) e^{ikx} \tilde{w}(k)\;dk\\
        &- \frac{2it}{\sqrt{2\pi}} \int_{\bbR^+} T(k) \partial_x (\chi_+(x) m_+(x,k)) e^{ikx} \tilde{w}(k)\;dk\\
        =:& \DFT_{+,T}^{+}\DFT [J_V w] + i\rmI_{0,V,T} - 2i \rmII_{0,V,T}
    \end{split}\end{equation}
    $\DFT_{+,T}^{-1}\DFT J_V$ is one of the terms making up $T_{0,V}J_V$, so it only remains to show that the terms $\rmI_{0,V,T}$ and $\rmII_{0,V,T}$ satisfy bounds compatible with~\eqref{eqn:J0-to-JV-errs}.  For $\rmI_{0,V,T}$, we write
    \begin{equation}\label{eqn:rmI-T-decomp}\begin{split}
        \rmI_{0,V,T} =& \frac{\chi_+(x)}{\sqrt{2\pi}} \int_{\bbR^+} T'(k) e^{ikx} \tilde{w}(k)\;dk\\
        &+ \frac{1}{\sqrt{2\pi}} \int_{\bbR^+} T'(k)  \chi_+(x) (m_+(x,k)-1) e^{ikx} \tilde{w}(k)\;dk\\
        &+ \frac{1}{\sqrt{2\pi}} \int_{\bbR^+} T(k) \chi_+(x) \partial_km_+(x,k) e^{ikx} \tilde{w}(k)\;dk\\
        =:& \rmI_{0,V,T}^{(1)} + \rmI_{0,V,T}^{(2)} + \rmI_{0,V,T}^{(3)}
    \end{split}
    \end{equation}
    For the first term, we note that $T'(k)$ is bounded by~\Cref{lem:TR-deriv-bdds}, so by Plancherel's theorem
    \begin{equation*}
        \lVert \rmI_{0,V,T}^{(1)}\rVert_{L^2_x} \lesssim \lVert T'(k) \tilde{w}(k) \rVert_{L^2_k} \lesssim \lVert w \rVert_{L^2}
    \end{equation*}
    For $\rmI^{(2)}_{0,V,T}$, we note that by~\Cref{lem:TR-deriv-bdds,lem:m-pm-basic-decay},
    $$|T'(k) \chi_+(x) (m_+(x,k) - 1)| \lesssim \chi_+(x)\frac{\mathcal{W}^{1}_+(x)}{\jBra{k}^2} \in L^2_{x,k}$$
    which gives the bound $|\rmI^{(2)}_{0,V,T}| \lesssim \lVert w \rVert_{L^2}$.  Finally, for $\rmI^{(3)}_{0,V,T}$, we observe that
    $$|T(k) \chi_+(x) \partial_k m_+(x,k)|\lesssim \chi_+(x) \frac{\mathcal{W}^{3/2+}_+(x)}{|k|^{1/2-} \jBra{k}^{1/2+}} \in L^2_{x,k}$$
    so
    \begin{equation*}
        \lVert \rmI^{(3)}_{0,V,T} \rVert_{L^2_x} \lesssim \lVert u \Vert_{L^2}
    \end{equation*}
    In particular, the bounds for $\rmI_{0,V,T}$ are compatible with those for $E^1_{0,V}$.  To control $\rmII_{0,V,T}$, we take advantage of dispersion to compensate for the extra factor of $t$.  To begin, we write
    \begin{equation*}\begin{split}
            \rmII_{0,V,T} =& \frac{t}{\sqrt{2\pi}} \int_{\bbR^+} T(k) (\partial_x \chi_+(x))m_+(x,k)) e^{ikx} \tilde{w}(k)\;dk\\
            &+ \frac{t}{\sqrt{2\pi}} \int_{\bbR^+} T(k) \chi_+(x) (\partial_x m_+(x,k)) e^{ikx} \tilde{w}(k)\;dk\\
            =:& \rmII_{0,V,T}^{(1)} + \rmII_{0,V,T}^{(2)}
    \end{split}
    \end{equation*}
    We will show how to estimate $\rmII^{(2)}_{0,V,T}$: The estimate for $\rmII^{(1)}_{0,V,T}$ is simpler, since the compact support of $\partial_x \chi_+$ allows us to apply arbitrarily many weights.  Using~\eqref{eqn:dx-m-ident}, we further decompose $\rmII^{(2)}_{0,V,T}$ as
    \begin{equation*}\begin{split}
        \rmII_{0,V,T}^{(2)} =& \frac{t}{\sqrt{2\pi}} \int_{\bbR^+} \int_x^\infty \chi(t^{1/2}k) T(k) \chi_+(x) V(y) (m_+(y,k) - 1) e^{ik(x-2y)} \tilde{w}(k)\;dydk\\
        &+ \frac{t}{\sqrt{2\pi}} \int_{\bbR^+} \int_x^\infty (1-\chi(t^{1/2}k)) T(k) \chi_+(x) V(y) (m_+(y,k) - 1) e^{ik(x-2y)} \tilde{w}(k)\;dydk\\
        &+ \frac{t}{\sqrt{2\pi}} \int_{\bbR^+} \int_x^\infty T(k) \chi_+(x) V(y) e^{ik(x-2y)} \tilde{w}(k)\;dydk\\
        =& \rmII^{(2,1)}_{0,V,T} + \rmII^{(2,2)}_{0,V,T} + \rmII^{(2,3)}_{0,V,T}
    \end{split}\end{equation*}
    where $\chi(z)$ is supported on $|z| < 2$.  For $\rmII^{(2,1)}_{0,V,T}$, we observe that
    \begin{equation}\label{eqn:double-V-trick}\begin{split}
        \chi_+(x)  \int_x^\infty |V(y) (m_+(y,k) - 1)|\;dy \lesssim& \frac{\jBra{x}^{1-2\gamma}}{\jBra{k}}
    \end{split}
    \end{equation}
    Using the bound $|\tilde{w}(k)| \lesssim |k|^{1/2} \lVert J_V w \rVert_{L^2}$ and the support condition for $\chi$, it follows that
    \begin{equation*}
        \lVert \jBra{x}^{2\gamma - 3/2 -a} \rmII^{(2,1)}_{0,V,T} \rVert_{L^2} \lesssim t^{1/4}\lVert J_V w \rVert_{L^2}
    \end{equation*}
    which is compatible with the estimate required for $E^2_{0,V}(t)$.  For $\rmII^{(2,2)}_{0,V,T}$, we write $\tilde{w}(k) = e^{itk^2} \tilde{\phi}(k)$ and integrate by parts using $\frac{1}{2itk} \partial_k e^{itk^2} = e^{itk^2}$ to obtain
    \begin{subequations}\begin{align}
        \rmII^{(2,2)}_{0,V,T} =& -\frac{t^{\frac{1}{2}}}{2i\sqrt{2\pi}} \int_{\bbR^+} \int_x^\infty \chi'(t^{\frac{1}{2}}k) T(k) \chi_+(x) V(y) (m_+(y,k) - 1) e^{ik(x-2y)} e^{itk^2}\frac{\tilde{\phi}(k,t)}{k}\;dydk\label{eqn:rmII-22-T-bdy}\\
        &+ \frac{1}{2i\sqrt{2\pi}} \int_{\bbR^+} \int_x^\infty (1-\chi(t^{\frac{1}{2}}k)) T'(k) \chi_+(x) V(y) (m_+(y,k) - 1) e^{ik(x-2y)} e^{itk^2}\frac{\tilde{\phi}(k,t)}{k}\;dydk\label{eqn:rmII-22-T-T}\\
        &+ \frac{1}{2i\sqrt{2\pi}} \int_{\bbR^+} \int_x^\infty (1-\chi(t^{\frac{1}{2}}k)) T(k) \chi_+(x) V(y) \partial_k\left[(m_+(y,k) - 1) e^{ik(x-2y)}\right] e^{itk^2}\frac{\tilde{\phi}(k,t)}{k}\;dydk\label{eqn:rmII-22-T-Jost}\\
        &+ \frac{1}{2i\sqrt{2\pi}} \int_{\bbR^+} \int_x^\infty (1-\chi(t^{\frac{1}{2}}k)) T(k) \chi_+(x) V(y) (m_+(y,k) - 1) e^{ik(x-2y)} e^{itk^2}\frac{\partial_k\tilde{\phi}(k,t)}{k}\;dydk\label{eqn:rmII-22-T-phi}\\
        &- \frac{1}{2i\sqrt{2\pi}} \int_{\bbR^+} \int_x^\infty (1-\chi(t^{\frac{1}{2}}k)) T(k) \chi_+(x) V(y) (m_+(y,k) - 1) e^{ik(x-2y)} e^{itk^2}\frac{\tilde{\phi}(k,t)}{k^2}\;dydk\label{eqn:rmII-22-T-denom}
    \end{align}
    \end{subequations}
    For~\eqref{eqn:rmII-22-T-bdy}, we note that $\lVert \frac{\tilde{\phi(k)}}{k}\rVert_{L^2} \lesssim \lVert J_V w \rVert_{L^2}$ by Hardy's inequality, so using the bound~\eqref{eqn:double-V-trick} we have that
    \begin{equation*}\begin{split}
        \lVert \jBra{x}^{2\gamma - 3/2 - a} \eqref{eqn:rmII-22-T-bdy} \rVert_{L^2}
        \lesssim& t^{1/2} \int_{k > 0}  \frac{|\chi'(t^{1/2}k) T(k)|}{\jBra{k}} \lVert \jBra{x}^{-1/2-a} \rVert_{L^2_x} \left|\frac{\tilde{\phi}(k,t)}{k}\right|\; dk\\
        \lesssim& t^{1/4} \lVert J_V w \rVert_{L^2}
    \end{split}\end{equation*}
    For~\eqref{eqn:rmII-22-T-T}, we note that~\Cref{lem:TR-deriv-bdds} implies that $T'(k) \in L^2$ for $\gamma \geq 2$, so
    \begin{equation*}
        \lVert \jBra{x}^{2\gamma - 3/2 - a} \eqref{eqn:rmII-22-T-T} \rVert_{L^2} \lesssim \lVert J_V w \rVert_{L^2}
    \end{equation*}
    For~\eqref{eqn:rmII-22-T-Jost}, we use~\eqref{eqn:d-k-m-pm-basic-bdds} with $\theta = 0$ to conclude that
    \begin{equation*}\begin{split}
        \chi_+(x)  \int_x^\infty |V(y) \partial_k((m_+(y,k) - 1)e^{ik(x-2y)})|\;dy \lesssim& \frac{\jBra{x}^{2\gamma - 1}}{|k|}
    \end{split}
    \end{equation*}
    so
    \begin{equation*}\begin{split}
        \lVert \jBra{x}^{2\gamma - 3/2 - a} \eqref{eqn:rmII-22-T-Jost} \rVert_{L^2} 
        \lesssim& \int_{k > 0} (1-\chi(t^{1/2}k)) \frac{1}{|k|} \lVert \jBra{x}^{-1/2-a} \rVert_{L^2_x} \left|\frac{\tilde{\phi}(k,t)}{k}\right|\; dk\\
        \lesssim& t^{1/4} \lVert J_V w \rVert_{L^2}
    \end{split}\end{equation*}
    and similar arguments show that
    \begin{equation*}
        \lVert \jBra{x}^{2\gamma -3/2-a} (\eqref{eqn:rmII-22-T-phi} + \eqref{eqn:rmII-22-T-denom}) \rVert_{L^2} \lesssim t^{1/4} \lVert J_V w \rVert_{L^2}
    \end{equation*}
    which shows that $\rmII^{(2,2)}_{0,V,T}$ can be absorbed into $E^2_{0,V}$.  
    
    Finally, for $\rmII^{(2,3)}_{0,V,T}$, we interchange the order of integration and identify the $k$ integral as a Fourier integral operator to obtain
    \begin{equation*}
        \rmII^{(2,3)}_{0,V,T} = t \chi_+(x) \int_x^\infty V(y) \bbOne_{D>0} T(D) \waveop^* w(x-2y)\;dy
    \end{equation*}
    where $\waveop^*$ is the adjoint wave operator defined in~\eqref{eqn:adj-wave-op}.  Thus, for $p \in (1,\infty)$, we find that
    \begin{equation*}\begin{split}
        \lVert \jBra{x}^\gamma \rmII^{(2,3)}_{0,V,T} \rVert_{L^p} \leq& t \int \jBra{y}^\gamma |V(y)|\;dy \lVert \waveop^* u \rVert_{L^p}\\
        \lesssim& t\lVert u \rVert_{L^p}
    \end{split}\end{equation*}
    where on the last line we have used~\Cref{lem:wave-op-bddness}.  Since this is compatible with the bounds for $E_{0,V}^3$, this completes the proof for the transmitted wave.
    
    We now show how to handle the reflected and incident waves.  We can use~\eqref{eqn:fwd-wave-IBP} for the incident wave and
    \begin{equation*}
        (x-2it\partial_x) e^{i(-kx + tk^2)} = (x-2tk) e^{i(-kx + tk^2)} = i\partial_k e^{i(-kx + tk^2)}
    \end{equation*}
    for the reflected wave to integrate by parts and obtain the bounds
    \begin{equation}\label{eqn:J0-JV-RI-split}\begin{split}
        J_0 (\DFT^{*}_{+,R} + \DFT^{*}_{+,I}) \tilde{w} =& (-\DFT_{+,R}^{-1} \DFT + \DFT_{+,I}^{-1} \DFT) w \\
        & - \frac{i}{\sqrt{2\pi}} \int_{\bbR^+} \partial_k (R(k) \chi_-(x) m_-(x,-k)) e^{i(-kx + tk^2)} \tilde{\phi}(k)\;dk\\
        & + \frac{i}{\sqrt{2\pi}} \int_{\bbR^+} \partial_k (\chi_-(x) m_-(x,k)) e^{i(kx + tk^2)} \tilde{\phi}(k)\;dk\\
        &+ \frac{2it}{\sqrt{2\pi}} \int_{\bbR^+} R(k) \partial_x (\chi_-(x) m_-(x,-k)) e^{i(-kx + tk^2)} \tilde{\phi}(k)\;dk\\
        &- \frac{2it}{\sqrt{2\pi}} \int_{\bbR^+} \partial_x (\chi_-(x) m_-(x,k)) e^{i(kx + tk^2)} \tilde{\phi}(k)\;dk\\
        =& (-\DFT_{+,R}^{-1} \DFT + \DFT_{+,I} \DFT) J_V w + i (-\rmI_{0,V,R} + \rmI_{0,V,I}) - 2i (-\rmII_{0,V,R} + \rmII_{0,V,I})
    \end{split}\end{equation}
    The initial term $(-\DFT_{+,R}^{-1} \DFT + \DFT_{+,I}^{-1} \DFT) J_V w$ combined with the term $\DFT_{+,T} \DFT J_V w$ obtained earlier completes the expression~\eqref{eqn:T-0V-expr} for $T_{0,V}$.  Moreover, since the arguments for $\rmI_{0,V,T}$ and $\rmII_{0,V,T}$ did not use any special property of $T(k)$ beyond~\Cref{lem:TR-pdo-bd,lem:TR-deriv-bdds}, they are easily adapted to when the waves are incoming or reflected.  The $L^2$ boundedness of $T_{0,V}$ follows immediately from~\Cref{lem:pDFT-bddness}.
\end{proof}

\subsection{Control of \texorpdfstring{$J_V$}{JV} in terms of \texorpdfstring{$J_0$}{J0}}\label{sec:JV-to-J0}
\begin{proof}[Proof of~\Cref{thm:JV-to-J0}]
    For $k > 0$, we have that
    \begin{equation*}\begin{split}
        \DFT [J_V w](k)  =& (i\partial_k + 2tk) \DFT w(k)\\
                    =& \frac{i\partial_k + 2tk}{\sqrt{2\pi}} \int \overline{T(k)} \overline{\chi_+(x) m_+(x,k)} e^{-ikx} u(x)\;dx\\
                    &+ \frac{i\partial_k + 2tk}{\sqrt{2\pi}} \int \overline{\chi_-(x) m_-(x,k)} e^{-ikx} u(x)\;dx\\
                    &+ \frac{i\partial_k + 2tk}{\sqrt{2\pi}} \int \overline{R(k)} \overline{\chi_-(x) m_-(x,-k)} e^{ikx} u(x)\;dx
    \end{split}\end{equation*}
    with a similar expansion for $k < 0$.  Using the identity
    \begin{equation*}
        (i\partial_k + 2tk) e^{\mp ikx} = \pm (x + 2it\partial_x) e^{\mp ikx}
    \end{equation*}
    and integrating by parts in $x$, we see that this can be rewritten as
    \begin{align*}
        \DFT J_V w  =& (\DFT_{T} + \DFT_I - \DFT_R) [J_0 w] (x)\\
        &- \frac{i}{\sqrt{2\pi}} \int \partial_k \left(\overline{T(k)} \overline{\chi_+(x) m_+(x,k)}\right)  e^{-ikx} w(x)\;dx\\
        &- \frac{i}{\sqrt{2\pi}} \int \overline{\chi_-(x) \partial_k m_-(x,k)} e^{-ikx} w(x)\;dx\\
        &- \frac{i}{\sqrt{2\pi}} \int \partial_k \left(\overline{R(k)} \overline{\chi_-(x) m_-(x,-k)}\right) e^{ikx} w(x)\;dx\\
        &- \frac{2it}{\sqrt{2\pi}} \int \partial_x \left(\overline{T(k)} \overline{\chi_+(x) m_+(x,k)}\right)  e^{-ikx} w(x)\;dx\\
        &- \frac{2it}{\sqrt{2\pi}} \int \partial_x\left(\overline{\chi_-(x) m_-(x,k)}\right) e^{-ikx} w(x)\;dx\\
        &+ \frac{2it}{\sqrt{2\pi}} \int \partial_x \left(\overline{R(k)} \overline{\chi_-(x) m_-(x,-k)}\right) e^{ikx} w(x)\;dx\\
        =:& (\DFT_{T} + \DFT_I - \DFT_R) [J_0 w] (x) -i  (\rmI_{V,0,T} + \rmI_{V,0,I} + \rmI_{V,0,R}) - 2i (\rmII_{V,0,T} + \rmII_{V,0,I} + \rmII_{V,0,R})
    \end{align*}
    The first term is simply $T_{V,0} J_0$.  By~\Cref{lem:pDFT-bddness,lem:wave-op-bddness}, we have that
    \begin{equation*}
        T_{V,0} = \Omega \cF^*(\DFT_{T} + \DFT_{I} - \DFT_R): L^p \to L^p
    \end{equation*}
    for $p \in (1,2]$.  Thus, it is enough to prove bounds for $\rmI_{V,0,A}$ and $\rmII_{V,0,A}$ ($A \in \{I,T,R\}$) that are compatible with~\eqref{eqn:JV-to-J0-errs}.  Again, we will focus on the bounds for the transmitted waves, as the reflected and incident waves can be controlled using similar arguments.  The term $\rmI_{V,0,T}$ can be seen to be dual to $\rmI_{0,V,T}$, which immediately implies the bound
    \begin{equation*}
        \lVert \rmI_{V,0,T}^{(1)}\rVert_{L^2} \lesssim \lVert w \rVert_{L^2}
    \end{equation*}
    and allows us to absorb this term into $E^1_{V,0}(t)$.  Turning to $\rmII_{V,0,T}$, we define
    \begin{equation*}\begin{split}
        t\rmII_{V,0,T} =& \frac{tT(k)}{\sqrt{2\pi}} \int \overline{\partial_x \chi_+(x) m_+(x,k)} e^{-ikx} w(x)\;dx\\
        &+ \frac{tT(k)}{\sqrt{2\pi}} \int  \overline{\chi_+(x) \partial_x m_+(x,k)} e^{-ikx} w(x)\;dx\\
        =:& t\rmII^{(1)}_{V,0,T} + t\rmII_{V,0,T}^{(2)}
    \end{split}
    \end{equation*}
    The first term $\rmII^{(1)}_{V,0,T}$ is easy to control since $\partial_x \chi_+$ can absorb any space weights, so we turn to $\rmII_{V,0,T}^{(2)}$ and perform a second decomposition using~\eqref{eqn:dx-m-ident}:
    \begin{equation*}\begin{split}
        t\rmII_{V,0,T}^{(2)} =& \frac{t\overline{T(k)}}{\sqrt{2\pi}} \int \int_{x}^\infty \chi_+(x) e^{-ik(x-2y)} V(y) \overline{(m_+(y,k) - 1)}\;dy w(x)\;dx\\
        &+ \frac{t\overline{T(k)}}{\sqrt{2\pi}} \int \int_{x}^\infty \chi_+(x) e^{-ik(x-2y)} V(y) \;dy w(x)\;dx\\
        =:& t\rmII_{V,0,T}^{(2,1)} + t\rmII_{V,0,T}^{(2,2)}
    \end{split}\end{equation*}
    Duality with $\rmII_{0,V,T}^{(2,3)}$ shows that $\rmII_{V,0,T}^{(2,2)}$ obeys the bounds required of $E^3_{V,0}$.  For $\rmII_{V,0,T}^{(2,1)}$, a direct application of~\eqref{eqn:double-V-trick} yields
    \begin{equation*}\begin{split}
        \lVert \rmII_{V,0,T}^{(2,1)} \rVert_{L^2_k} \lesssim& t \int |\jBra{x}^{1-2\gamma} w(x)|\;dx\\
        \lesssim& t \lVert \jBra{x}^{3/2 + a -2\gamma} w(x) \rVert_{L^2_x} 
    \end{split}\end{equation*}
    which is consistent with the required bounds for $E^2_{V,0}$.  Similar estimates hold for the reflected and transmitted waves, completing the proof of~\Cref{thm:JV-to-J0}.
\end{proof}

\section{Leading order asymptotics}\label{sec:asympt}
Now, we show how to obtain the leading-order asymptotics for $u(x,t)$ as $t\to\infty$.  To do this, we define the (free) wave packet with velocity $v$ as
\begin{equation}\label{eqn:wave-packet-def}
    \Psi_v(x,t) = e^{-i\frac{x^2}{4t}} \chi\left(\frac{x - vt}{\sqrt{t}}\right)
\end{equation}
where $\chi$ is an even function supported on $(-10,10)$ with $\int \chi(z)\;dz = 1$.  A straightforward calculation shows that $\Psi_v$ is an approximate solution to the free Schr\"odinger equation:
\begin{equation}\label{eqn:wave-packet-deriv}
    (i\partial_t - \Delta) \Psi_v = \frac{e^{-i\frac{x^2}{4t}}}{2t}\partial_x \left(t^{1/2} \chi'\left(\frac{x-vt}{\sqrt{t}}\right) + i (x-vt) \chi\left(\frac{x-vt}{\sqrt{t}}\right)\right)
\end{equation}
With the wave-packet in hand, we now define the asymptotic profile
\begin{equation}\label{eqn:alpha-eqn}
    \alpha(v,t) = \int u(x,t) \overline{\Psi_v(x,t)}\;dx
\end{equation}

\subsection{Reduction to wave packet dynamics}

The asymptotic profile $\alpha$ is closely related to the pointwise behavior of $u$ in physical and distorted Fourier space (cf.~\cite[Lemma 2.2]{ifrimGlobalBoundsCubic2015}):
\begin{lemma}\label{lem:alpha-bounds}
    We have that
    \begin{equation}\label{eqn:alpha-conv-bounds}
        \lVert \alpha(v,t) \rVert_{L^\infty_v} \lesssim t^{1/2} \lVert u(x,t) \rVert_{L^\infty_x},\quad \lVert \alpha(v,t) \rVert_{L^2_v} \lesssim \lVert u(x,t) \rVert_{L^2_x}
    \end{equation}
    Moreover, defining $R_t = \{v: v > 100 t^{-1/2}\}$, we have the physical space bounds
    \begin{equation}\label{eqn:alpha-phys-bdds}\begin{split}
        \lVert u(vt,t) - t^{-1/2}e^{-i\frac{x^2}{4t}} \alpha(v,t)\rVert_{L^\infty_v(R_t)} \lesssim& t^{-3/4} \left(\lVert J_V u \rVert_{L^2} + \lVert u \rVert_{L^2}\right)\\
        \lVert u(vt,t) - t^{-1/2}e^{-i\frac{x^2}{4t}} \alpha(v,t)\rVert_{L^2_v(R_t)} \lesssim& t^{-1} \left(\lVert J_V u \rVert_{L^2} + \lVert u \rVert_{L^2}\right)
    \end{split}\end{equation}
    In addition, $\alpha$ and is a good approximation to $\tilde{u}$ in the sense that
    \begin{equation}\label{eqn:alpha-DFT-bdds}\begin{split}
        \lVert e^{-it\frac{v^2}{4}} \tilde{u}(-\frac{v}{2},t) - \alpha(v,t) \rVert_{L^\infty_v(R_t)} \lesssim t^{-1/4} \lVert J_V u \rVert_{L^2} + t^{1 - \gamma/2} \lVert u \rVert_{L^2}\\
        \lVert e^{-it\frac{v^2}{4}} \tilde{u}(-\frac{v}{2},t) - \alpha(v,t) \rVert_{L^2_v(R_t)} \lesssim t^{-1/2} \lVert J_V u \rVert_{L^2} + t^{3/4 - \gamma/2} \lVert u \rVert_{L^2}
    \end{split}\end{equation}
\end{lemma}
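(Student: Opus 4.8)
The three groups of bounds are of increasing difficulty, and I would establish them in order.

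\textbf{The convolution bounds \eqref{eqn:alpha-conv-bounds}.} Set $w(x,t) := e^{ix^2/4t}u(x,t)$, so that $\alpha(v,t) = \int w(x,t)\,\chi\bigl((x-vt)/\sqrt t\bigr)\,dx$; up to the dilation $v\mapsto vt$ this exhibits $\alpha(\cdot,t)$ as the convolution of $w(\cdot,t)$ with $\chi\bigl((\cdot)/\sqrt t\bigr)$, which has $L^1$ norm $\sqrt t$ (here $\int\chi=1$ is used). Young's inequality then gives $\lVert\alpha(\cdot,t)\rVert_{L^\infty_v}\lesssim t^{1/2}\lVert w\rVert_{L^\infty}=t^{1/2}\lVert u\rVert_{L^\infty}$ and $\lVert\alpha(\cdot,t)\rVert_{L^2_v}\lesssim\lVert w\rVert_{L^2}=\lVert u\rVert_{L^2}$.

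\textbf{The physical-space bounds \eqref{eqn:alpha-phys-bdds}.} I would follow the Ifrim--Tataru argument. Since $t^{-1/2}\int\chi\bigl((x-vt)/\sqrt t\bigr)\,dx=1$,
\[
u(vt,t)-t^{-1/2}e^{-i\frac{(vt)^2}{4t}}\alpha(v,t) = t^{-1/2}e^{-i\frac{(vt)^2}{4t}}\int\bigl(w(vt,t)-w(x,t)\bigr)\chi\Bigl(\tfrac{x-vt}{\sqrt t}\Bigr)\,dx .
\]
Using the identity $\partial_x w=\tfrac{i}{2t}e^{ix^2/4t}(J_0u)$ and Cauchy--Schwarz one gets $|w(vt,t)-w(x,t)|\lesssim t^{-1}|x-vt|^{1/2}\lVert\bbOne_{\{|y|\gtrsim\sqrt t\}}J_0u\rVert_{L^2}$ on the support of $\chi\bigl((x-vt)/\sqrt t\bigr)$ when $v\in R_t$, since the segment joining $x$ and $vt$ stays at distance $\gtrsim\sqrt t$ from the origin there. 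Combined with $\int|x-vt|^{1/2}|\chi((x-vt)/\sqrt t)|\,dx\lesssim t^{3/4}$ this yields the $L^\infty_v$ estimate, provided
\[
\lVert\bbOne_{\{|y|\gtrsim\sqrt t\}}J_0u\rVert_{L^2}\lesssim\lVert J_Vu\rVert_{L^2}+\lVert u\rVert_{L^2},
\]
which is exactly where \Cref{thm:J0-to-JV} enters: $T_{0,V}$ and $E^1_{0,V}$ are $L^2$-bounded, while (by the proof of that theorem, e.g.\ via \eqref{eqn:double-V-trick}) $E^2_{0,V}u$ and $E^3_{0,V}u$ are localized near the origin and so are $\lesssim\lVert J_Vu\rVert_{L^2}+\lVert u\rVert_{L^2}$ once restricted to $|y|\gtrsim\sqrt t$. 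For the $L^2_v$ bound I would instead write $w(vt,t)-w(x,t)=-\int_0^1(x-vt)\partial_yw\bigl(x+s(vt-x),t\bigr)\,ds$, substitute $x=vt+\sqrt t\,z$ and apply Minkowski's inequality in $(z,s)$; the change of variables $v\mapsto vt+(1-s)\sqrt t\,z$ produces an extra factor $t^{-1/2}$, improving $t^{-3/4}$ to $t^{-1}$.

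\textbf{The distorted Fourier bounds \eqref{eqn:alpha-DFT-bdds}.} This is the heart of the lemma, and is essentially \cite[Lemma 2.2]{ifrimGlobalBoundsCubic2015} transported to the distorted setting. Given \eqref{eqn:alpha-phys-bdds}, it suffices to compare $t^{1/2}e^{i(vt)^2/4t}u(vt,t)$ with $e^{-it v^2/4}\tilde u(-v/2,t)$ for $v\in R_t$, which is a \emph{linear} statement about $e^{itH}$. Writing $u=e^{itH}\phi$, so $\tilde\phi(k,t)=e^{-itk^2}\tilde u(k,t)$ and, since $\widetilde{J_Vu}(k,t)=ie^{-itk^2}\partial_k\bigl(e^{itk^2}\tilde u(k,t)\bigr)$, $\lVert\partial_k\tilde\phi\rVert_{L^2}=\lVert J_Vu\rVert_{L^2}$, I would expand $u(vt,t)=\int\psi(vt,k)e^{itk^2}\tilde\phi(k,t)\,dk$ through the Jost decomposition of $\psi$. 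For $v\in R_t$ (hence $v>0$) the phase $tk^2\pm vtk$ is stationary only for the piece $(2\pi)^{-1/2}\chi_+(vt)e^{ivtk}m_+(vt,k)$ and only at $k=-v/2<0$; the remaining contributions (the wrong sign of $k$, the reflected piece, the $\chi_-$-transmitted piece) are non-stationary and, using $\tilde\phi(0,t)=0$ exactly as in the proof of \Cref{lem:improved-low-x-bdds} via \Cref{lem:abs-nonst-ph}, are $O\bigl(t^{-3/4}\lVert\partial_k\tilde\phi\rVert_{L^2}\bigr)$. Stationary phase around $k=-v/2$ then identifies $u(vt,t)$, to leading order, with a universal phase times $t^{-1/2}m_+(vt,-\tfrac{v}{2})\tilde u(-\tfrac{v}{2},t)$; since $v\in R_t$ forces $vt\gtrsim t^{1/2}$ and \Cref{lem:m-pm-basic-decay} gives $|m_+(vt,-\tfrac{v}{2})-1|\lesssim\mathcal{W}_+^0(vt)\lesssim\jBra{vt}^{-\gamma}\lesssim t^{-\gamma/2}$, combining with \eqref{eqn:alpha-phys-bdds} (and matching the constant via $\int\chi=1$) reproduces \eqref{eqn:alpha-DFT-bdds}: the $t^{-1/4}\lVert J_Vu\rVert_{L^2}$ loss comes from the $t^{-3/4}$ stationary-phase remainder multiplied by the $t^{1/2}$ in \eqref{eqn:alpha-phys-bdds}, and the $t^{1-\gamma/2}\lVert u\rVert_{L^2}$ loss from crudely estimating the amplitude defect $m_+-1$ together with the genuinely potential-dependent (localized and reflected) contributions using $V\in L^1_\gamma$. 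The $L^2_v$ versions follow the same way, picking up the extra $t^{-1/4}$ from the improved $L^2_v$ part of \eqref{eqn:alpha-phys-bdds} and from Cauchy--Schwarz in $k$ near the stationary point.

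\textbf{Main obstacle.} The delicate step is the last one: carrying out the stationary-phase analysis of the distorted-Fourier inversion of $u(vt,t)$ \emph{with the potential present}, i.e.\ isolating the single stationary contribution and showing that every non-stationary and near-origin term loses only $t^{-3/4}\lVert J_Vu\rVert_{L^2}$ (or the $\gamma$-dependent powers of $\lVert u\rVert_{L^2}$), while also pinning down the leading constant. The hypothesis $\tilde\phi(0,t)=0$ is precisely what removes the boundary terms in the required integrations by parts (as in \Cref{lem:abs-nonst-ph}), and the decay $\gamma>2$ is what keeps the amplitude $m_+(vt,\cdot)$ close enough to $1$ and the localized pieces small enough for the stated errors.
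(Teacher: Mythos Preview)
Your treatment of \eqref{eqn:alpha-conv-bounds} and \eqref{eqn:alpha-phys-bdds} is essentially the paper's: write $\alpha$ as a convolution of $q=e^{ix^2/4t}u$ against a rescaled $\chi$, use $-2it\partial_x q=e^{-ix^2/4t}J_0u$, and invoke \Cref{thm:J0-to-JV} to trade $J_0$ for $J_V$ on the region $|y|\gtrsim t^{1/2}$.

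For \eqref{eqn:alpha-DFT-bdds} your route is genuinely different. You reduce via \eqref{eqn:alpha-phys-bdds} to a pointwise statement comparing $t^{1/2}e^{iv^2t/4}u(vt,t)$ with $e^{-itv^2/4}\tilde u(-v/2,t)$, and then run a stationary-phase analysis on the distorted Fourier inversion of $u(vt,t)$, using \Cref{lem:abs-nonst-ph} for the transmitted and reflected pieces and a genuine stationary-phase expansion for the incident piece at $k=-v/2$. The paper never does this: instead it computes $\DFT\Psi_v$ directly (showing $\DFT\Psi_v(k,t)=\overline{T(k)}\cF\Psi_v(k,t)+O(\jBra{k}^{-1}t^{1/2}\jBra{vt}^{1-\gamma})$ for $k>0$, and analogously for $k<0$), applies Plancherel to write $\alpha=\int\tilde u\,\overline{\DFT\Psi_v}$, and observes that the main contribution is a convolution of $\tilde f$ against a rescaled Schwartz bump $\chi_1$ --- the same structure as in the flat case --- so the comparison with $\tilde f(-v/2,t)$ follows from the same convolution/Minkowski argument used for \eqref{eqn:alpha-phys-bdds}. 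The paper's approach thus avoids any stationary-phase expansion, identifies the $t^{1-\gamma/2}\lVert u\rVert_{L^2}$ error transparently as the $(m_+-1)$-defect in $\DFT\Psi_v$, and gives the $L^2_v$ bound by the identical Minkowski argument as in \eqref{eqn:alpha-phys-bdds}; your approach is workable but makes the $L^2_v$ bound and the tracking of the leading constant more laborious. One small slip: \Cref{lem:m-pm-basic-decay} with $\theta=0$ gives $|m_+(vt,-v/2)-1|\lesssim |v|^{-1}\mathcal W_+^0(vt)$, not $\mathcal W_+^0(vt)$; the missing $|v|^{-1}\lesssim t^{1/2}$ does not spoil the estimate but changes the exponent you quote.
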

\begin{proof}    
    Let us define $q = e^{i\frac{x^2}{4t}} u$.  Then
    \begin{equation}\label{eqn:alpha-conv-form}\begin{split}
        t^{-1/2}\alpha(v,t) =& t^{-1/2}\int q(x,t) \chi\left(\frac{x-vt}{\sqrt{t}}\right) \;dx\\
                    =& \int q(tz,t) t^{1/2} \chi(t^{1/2} (z - v)) \;dz\\
                    =& [q(t\cdot,t) * t^{1/2} \chi(t^{1/2}\cdot)](v)
    \end{split}\end{equation}
    The first two inequalities in~\eqref{eqn:alpha-conv-bounds} thus follow from Young's inequality.  
    Turning to the bounds~\eqref{eqn:alpha-phys-bdds}, we note that it is equivalent to prove $L^\infty$ and $L^2$ bounds for $q(vt, t) - t^{1/2} \alpha(v,t)$.  Using the fact that $\chi$ has integral $1$, we see that
    \begin{equation}\label{eqn:q-alpha-diff}
        q(vt, t) - t^{-1/2} \alpha(v,t) = \int (q(vt,t) - q((v-z)t,t)) t^{1/2} \chi(z t^{1/2}) \;dz
    \end{equation}
    Let us express the difference as an integral:
    \begin{equation}\label{eqn:q-diff-int}
        q(vt,t) - q((v-z)t,t) = z \int_0^1 t\partial_x q((v-hz)t,t)\;dh
    \end{equation}
    Now, noting that
    \begin{equation}\label{eqn:d-x-q-identity}
        -2it\partial_x q(x,t) = e^{-i\frac{x^2}{4t}} J_0 u(x,t)
    \end{equation}
    and using~\Cref{thm:J0-to-JV}, we see that
    \begin{equation}\label{eqn:d-x-q-to-JV}\begin{split}
        t\partial_x q    =& \frac{ie^{-i\frac{x^2}{4t}}}{2} J_0 u\\
                        =& \frac{ie^{-i\frac{x^2}{4t}}}{2} (T_{0,V} J_V u + E^1_{0,V}(t) u + E^2_{0,V}(t) u + E^3_{0,V}(t) u)\\
    \end{split}\end{equation}
    For $v \in R^+_t$, $z \in \supp(\chi(t^{1/2}\cdot)$, we have $|v - z| \gtrsim |v|$, so
    \begin{equation}\label{eqn:q-diff-infty}\begin{split}
        \left| q(vt,t) - q((v-z)t,t) \right| \lesssim& \frac{|z|^{1/2}}{t^{1/2}} \left(\lVert T_{0,V} J_V u \rVert_{L^2} + \lVert E^1_{0,V} u \rVert_{L^2}+ |vt|^{\frac{3}{2}+a-2\gamma} \lVert \jBra{x}^{2\gamma - a - \frac{3}{2}}E^2_{0,V} u \rVert_{L^2} \right.\\
        &\left.\phantom{\frac{|z|^{1/2}}{t^{1/2}} } + |vt|^{-\gamma}  \lVert \jBra{x}^\gamma E^3_{0,V} u \rVert_{L^2}\right)\\
        \lesssim& \frac{|z|^{1/2}}{t^{1/2}}\left(1 + |vt|^{3/2+a-2\gamma} t^{1/4} + |vt|^{-\gamma} t\right) \left(\lVert J_V u \rVert_{L^2} + \lVert u \rVert_{L^2}\right)\\
        \lesssim& \frac{|z|^{1/2}}{t^{1/2}}\left(\lVert J_V u \rVert_{L^2} + \lVert u \rVert_{L^2}\right)
    \end{split}\end{equation}
    where the simplification on the last line follows from the fact that $|vt| \gtrsim t^{1/2}$.  Inserting this into~\eqref{eqn:q-alpha-diff} yields
    \begin{equation*}
        |q(vt,t) - t^{-1/2} \alpha(v,t)| \lesssim  \int |z|^{1/2} \chi(z t^{1/2})\;dz \left(\lVert J_V u \rVert_{L^2} + \lVert u \rVert_{L^2}\right) \lesssim t^{-3/4} \left(\lVert J_V u \rVert_{L^2} + \lVert u \rVert_{L^2}\right)
    \end{equation*}
    for $v \in R_t$, which gives the $L^\infty_v$ bound in~\eqref{eqn:alpha-phys-bdds}.  For the $L^2_v$ bound, we note that since $|v-z| \gtrsim |v|$,
    \begin{equation}\label{eqn:q-diff}\begin{split}
        \left \lVert q(vt,t) - q((v-z)t,t) \right\rVert_{L^2_v} 
            \lesssim& |z| \int_0^1 \lVert J_0 u((v-hz)t,t) \rVert_{L^2_v(R_t)} \;dh\\
            \lesssim& |z| \lVert T_{0,V} J_V u((v-hz)t,t) \rVert_{L^2_v(R_t)}+ |z| \lVert E^1_{0,V} u((v-hz),t) \rVert_{L^2_v(R_t)}\\
            &+ |z| \lVert E^2_{0,V} u((v-hz),t) \rVert_{L^2_v(R_t)} + |z| \lVert E^3_{0,V} u((v-hz),t) \rVert_{L^2_v(R_t)}\\
            \lesssim& \frac{|z|}{t^{1/2}} \lVert T_{0,V} J_V u(x,t) \rVert_{L^2}+ \frac{|z|}{t^{1/2}} \lVert E^1_{0,V} u(x,t) \rVert_{L^2}\\
            &+ \frac{|z|}{t^{1/2}} \lVert E^2_{0,V} u(x,t) \rVert_{L^2}+ \frac{|z|}{t^{1/2}} \lVert E^3_{0,V} u(x,t) \rVert_{L^2}\\
            \lesssim& \frac{|z|}{t^{1/2}} (1 + t^{1+a/2-\gamma} + t^{1-\gamma/2})(\lVert J_V u \rVert_{L^2} + \lVert u \rVert_{L^2})\\
            \lesssim& \frac{|z|}{t^{1/2}} (\lVert J_V u \rVert_{L^2} + \lVert u \rVert_{L^2})
    \end{split}\end{equation}
    Inserting this last bound into~\eqref{eqn:q-alpha-diff}, we see that
    \begin{equation*}\begin{split}
        \lVert q(vt, t) - t^{-1/2} \alpha(v,t) \rVert_{L^2_v(R_t)} \leq& \int \lVert q(vt,t) - q((v-z)t,t) \rVert_{L^2_v(R_t)} t^{1/2} \chi(z t^{1/2}) \;dz \\
        \lesssim& \int |z| \chi(z t^{1/2})\;dz (\lVert J_V u \rVert_{L^2} + \lVert u \rVert_{L^2}) \lesssim t^{-1} (\lVert J_V u \rVert_{L^2} + \lVert u \rVert_{L^2})
    \end{split}\end{equation*}
    which is the second inequality in~\eqref{eqn:alpha-phys-bdds}.
    
    We now prove the estimates~\eqref{eqn:alpha-DFT-bdds} in distorted Fourier space.  Here, we will write $R_t = R^+_t \cup R^-_t$, where $R^\pm_t = \{v \in R_t : \pm v > 0\}$.  We will focus on the estimates on $R^+_t$: the estimates on $R^-_t$ are similar once we make the usual modifications.  To begin, we observe that the flat Fourier transform of $\Psi_v$ is given by
    \begin{equation}\begin{split}
        \mathcal{F} \Psi_v(\xi,t) =& \frac{e^{it\xi^2} e^{it(\frac{v}{2}+\xi)^2}}{\sqrt{2\pi}} \int e^{-i(x-vt)(\frac{v}{2}+\xi)} e^{-i\frac{(x-vt)^2}{4t}} \chi\left(\frac{x-vt}{\sqrt{t}}\right)\;dx \\
        =& e^{it\xi^2} t^{1/2} \chi_1(t^{1/2}(\frac{v}{2}+\xi))
    \end{split}\end{equation}
    where
    \begin{equation*}
        \chi_1(\xi) = e^{-i\xi^2} \mathcal{F}\left\{ e^{-i\frac{x^2}{4}} \chi(x)\right\}(\xi)
    \end{equation*}
    has the property that
    \begin{equation*}\begin{split}
        \int \chi_1(\xi)\;d\xi  =& \int e^{-i\xi^2} \mathcal{F}\left\{ e^{-i\frac{x^2}{4}} \chi(x)\right\}(\xi)\;d\xi\\
                                =& \int \mathcal{F}^{-1}\left\{e^{-i\xi^2}\right\}(x) e^{-i\frac{x^2}{4}} \chi(x)\;dx\\
                                =& \int \chi(x)\;dx = 1
    \end{split}\end{equation*}
    Of course, to prove~\eqref{eqn:alpha-DFT-bdds}, the \textit{distorted} Fourier transform of $\Psi_v$ is more relevant.  As usual, we will consider the cases $k > 0$ and $k < 0$ separately.  Starting with $k > 0$, we observe that for $v \in R^+_t$, $\chi(t^{-1/2}(x - vt))$ vanishes on $x < 1$, so only the transmitted wave contributes to the DFT:
    \begin{equation*}\begin{split}
        \DFT(\Psi_v)(k,t)   =& \frac{1}{\sqrt{2\pi}}\int \overline{T(k) \chi_+(x) m_+(x,k) e^{ixk}} e^{-i\frac{x^2}{4t}} \chi\left(\frac{x-vt}{\sqrt{t}}\right)\;dx\\
                            =& \overline{T(k)} \mathcal{F}(\Psi_v)(k,t) + \overline{T(k)} \int \overline{(m_+(x,k) - 1)}e^{-ixk} \chi\left(\frac{x-vt}{\sqrt{t}}\right)\;dx
    \end{split}\end{equation*}
    A quick calculation shows that
    \begin{equation*}
        \left|\overline{T(k)} \int \overline{(m_+(x,k) - 1)}e^{-ixk} \chi\left(\frac{x-vt}{\sqrt{t}}\right)\;dx\right| \lesssim \frac{|T(k)|}{\jBra{k}} \int \jBra{x}^{1-\gamma} \chi\left(\frac{x-vt}{\sqrt{t}}\right)\;dx \lesssim t^{1/2}\jBra{k}^{-1}\jBra{vt}^{1-\gamma}
    \end{equation*}
    so
    \begin{equation*}\begin{split}
        \DFT(\Psi_v)(k,t)  =& \overline{T(k)} \mathcal{F}(\Psi_v)(k,t) + O(\jBra{k}^{-1} t^{1/2}\jBra{vt}^{1-\gamma})
    \end{split}\end{equation*}
    A similar calculation shows that for $k < 0$
    \begin{equation*}\begin{split}
        \DFT(\Psi_v)(k,t)   =& \frac{1}{\sqrt{2\pi}}\int \left(\overline{R(-k) \chi_+(x) m_+(x,k) e^{-ixk} + \chi_+(x) m_+(x,-k)e^{ixk}}\right) e^{-i\frac{x^2}{4t}} \chi\left(\frac{x-vt}{\sqrt{t}}\right)\;dx\\
                            =& \overline{R(-k)} \mathcal{F}(\Psi_v)(-k,t) + \mathcal{F}(\Psi_v)(k,t) + O(\jBra{k}^{-1} t^{1/2}\jBra{vt}^{1-\gamma})
    \end{split}\end{equation*}
    Thus, using the Plancherel theorem for the distorted Fourier transform, we find that
    \begin{align}
        \alpha(v,t)     =& \int \tilde{u}(k,t) \overline{\DFT \Psi_v(k,t))}\;dk\notag\\
        \begin{split}
                        =&  t^\frac{1}{2}\int_{k>0}T(k) \tilde{f}(k,t) \chi_1\left(t^\frac{1}{2}\left(\frac{v}{2}+k\right)\right)\;dk + t^\frac{1}{2}\int_{k<0} R(k) \tilde{f}(k,t) \chi_1\left(t^\frac{1}{2}\left(\frac{v}{2}-k\right)\right)\;dk\\
                        &+ t^\frac{1}{2}\int_{k<0} \tilde{f}(k,t) \chi_1\left(t^\frac{1}{2}\left(\frac{v}{2}+k\right)\right)\;dk + \int_{\bbR} \tilde{u}(k,t) O(\jBra{k}^{-1}t^\frac{1}{2}\jBra{vt}^{1-\gamma})\;dk
    \label{eqn:alpha-DFT-integrals}\end{split}\end{align}
    Focusing on the integral for the transmitted wave, we have that
    \begin{equation}\label{eqn:alpha-DFT-trans-wave}\begin{split}
        \left|t^{\frac{1}{2}}\int_{k>0} T(k) \tilde{f}(k,t) \chi_1(t^{\frac{1}{2}}(v/2+k))\;dk\right| \lesssim& \left\lVert \frac{\tilde{f}(k,t)}{k} \right\rVert_{L^2_k} \lVert t^{\frac{1}{2}}(v/2+k)\chi_1(t^{\frac{1}{2}}(v/2+k)) \rVert_{L^2_k}\\
        \lesssim_N& t^{-1/4}|t^\frac{1}{2} v|^{-N} \lVert J_V u \rVert_{L^2} 
    \end{split}\end{equation}
    where on the last line we have used the fact that $\chi_1$ is in the Schwartz class and therefore decays rapidly.  A similar estimate holds for the reflected wave, and applying Cauchy-Schwarz shows that the last term in~\eqref{eqn:alpha-DFT-integrals} is $O(t^{3/2-\gamma}|v|^{1-\gamma})\lVert u \rVert_{L^2}$.  Hence, the main contribution to $\alpha$ comes from the incident wave:
    \begin{equation*}
        \alpha(v,t) = t^{1/2}\int_{k<0} \tilde{f}(k,t) \chi_1(t^{1/2}(\frac{v}{2}+k))\;dk + O\left(t^{-1/4} \lVert J_v u \rVert_{L^2} + t^{3/2-\gamma}|v|^{1-\gamma}\lVert u \rVert_{L^2}\right)
    \end{equation*}
    Since the bound~\eqref{eqn:alpha-DFT-trans-wave} did not depend on any special properties of $T(k)$, we have that
    \begin{equation*}
        \left|t^{1/2}\int_{k<0} \tilde{f}(k,t) \chi_1(t^{1/2}(\frac{v}{2}+k))\;dk\right| \lesssim_N t^{-1/4}|t^\frac{1}{2} v|^{-N} \lVert J_V u \rVert_{L^2} 
    \end{equation*}
    Thus, noting that $t^{1/2} \chi_1(t^{1/2}\cdot)$ has integral $1$, we can argue as in the proof of~\eqref{eqn:q-diff} to find that
    \begin{equation*}\begin{split}
        t^{1/2}\int_{k<0} \tilde{f}(k,t) \chi_1(t^{1/2}(\frac{v}{2}+k))\;dk - \tilde{f}\left(-\frac{v}{2},t\right) =& t^{1/2} \int_{\bbR} \tilde{f}\left(\kappa - \frac{v}{2},t\right) \chi_1(t^{1/2}\kappa)\;d\kappa - \tilde{f}\left(-\frac{v}{2},t\right)\\
        &+ O\left(t^{-1/4}|t^\frac{1}{2} v|^{-N} \lVert J_V u \rVert_{L^2}  \right)\\
        =& t^{1/2} \int_{\bbR} \left(\tilde{f}\left(\kappa - \frac{v}{2},t\right) - \tilde{f}\left(-\frac{v}{2},t\right)\right) \chi_1(t^{1/2}\kappa)\;d\kappa\\
        &+ O\left(t^{-1/4}|t^\frac{1}{2} v|^{-N} \lVert J_V u \rVert_{L^2}  \right)\\
        =& \int \int_0^1 \partial_k \tilde{f}\left(h\kappa - \frac{v}{2},t\right) t^{1/2} \kappa \chi_1(t^{1/2}\kappa)\;dh d\kappa\\
        &+ O\left(t^{-1/4}|t^\frac{1}{2} v|^{-N} \lVert J_V u \rVert_{L^2}  \right)\\
        =& O\left(t^{-1/4} (1+ |t^\frac{1}{2} v|^{-N}) \lVert J_V u \rVert_{L^2}  \right)\\
    \end{split}\end{equation*}
    so
    \begin{equation*}
        |\tilde{f}(-\frac{v}{2},t) - \alpha(v,t)| \lesssim t^{-1/4} \lVert J_V u \rVert_{L^2} + t^{3/2-\gamma}|v|^{1-\gamma} \lVert u \rVert_{L^2}
    \end{equation*}
    which gives the $L^\infty$ part of~\eqref{eqn:alpha-DFT-bdds}.  For the $L^2$ estimate, we note that
    \begin{equation*}\begin{split}
        \left\lVert \tilde{f}(-v/2, t) - t^{\frac{1}{2}}\int_\bbR \tilde{f}(k,t) \chi_1(t^{\frac{1}{2}}(v/2+k))\;dk \right\rVert_{L^2_v(R^+_t)}        
        \leq& \int_\bbR \lVert \partial_v \tilde{f} \rVert_{L^2_v} |t^{1/2}\kappa \chi_1(t^{1/2}\kappa)|\;d\kappa\\
        \lesssim& t^{-1/2}\lVert J_V u \rVert_{L^2} 
    \end{split}\end{equation*}
    Combining this with the estimates for the other terms, we find that
    \begin{equation*}
        \lVert \tilde{f}(-\frac{v}{2},t) - \alpha(v,t) \rVert_{L^2(R_t)} \lesssim t^{-1/2} \lVert J_V u \rVert_{L^2} + t^{3/4 - \gamma/2} \lVert u \rVert_{L^2}
    \end{equation*}
    which completes the proof of~\eqref{eqn:alpha-DFT-bdds}.
\end{proof}

\subsection{Asymptotics for \texorpdfstring{$\alpha$}{alpha}}

Now, we show that the evolution of $\alpha(v,t)$ is essentially given by a Hamiltonian ODE for $|v| \gtrsim t^{-1/2}$, which can be integrated to give the dynamics.
\begin{lemma}\label{lem:alpha-dynamics-lem}
    For $|v| > 100 t^{-1/2}$, we have that
    \begin{equation*}
        \partial_t \alpha(v,t) = \mp \frac{i}{t} |\alpha(v,t)|^2 \alpha(v,t) + R(v,t)
    \end{equation*}
    where
    \begin{equation}\label{eqn:alpha-rem-bdds}\begin{split}
        \lVert R(v,t) \rVert_{L^\infty(R_t)} \lesssim& t^{-5/4} \left(\lVert J_V u \rVert_{L^2} + \lVert u \rVert_{L^2}\right)\\
        \lVert R(v,t) \rVert_{L^2(R_t)} \lesssim& t^{-3/2} \left(\lVert J_V u \rVert_{L^2} + \lVert u \rVert_{L^2}\right)
    \end{split}\end{equation}
\end{lemma}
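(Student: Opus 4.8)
The plan is to differentiate \eqref{eqn:alpha-eqn} in $t$, insert the equation \eqref{eqn:main-eqn} for $\partial_t u$ and the approximate–solution identity \eqref{eqn:wave-packet-deriv} for $\partial_t\Psi_v$, and use the self-adjointness of $H$ so that the Laplacian contributions cancel. Writing $\partial_t u = iHu \mp i|u|^2u$ and, from \eqref{eqn:wave-packet-deriv}, $\partial_t\overline{\Psi_v} = i\Delta\overline{\Psi_v} + i\overline{E_\Psi}$ with
\begin{equation*}
    E_\Psi(x,t) = \frac{e^{-ix^2/4t}}{2t}\,\partial_x\!\left(t^{1/2}\chi'\!\left(\tfrac{x-vt}{\sqrt t}\right) + i(x-vt)\chi\!\left(\tfrac{x-vt}{\sqrt t}\right)\right),
\end{equation*}
and integrating the $Hu$ term by parts via $\int (Hu)\overline{\Psi_v}\,dx = \int u\,\overline{H\Psi_v}\,dx = -\int u\,\Delta\overline{\Psi_v}\,dx + \int uV\,\overline{\Psi_v}\,dx$, the $\mp\Delta$ terms cancel and we are left with
\begin{equation*}
    \partial_t\alpha(v,t) = \mp i\int |u|^2 u\,\overline{\Psi_v}\,dx + i\int uV\,\overline{\Psi_v}\,dx + i\int u\,\overline{E_\Psi}\,dx .
\end{equation*}
It then remains to extract the leading term $\mp \tfrac it|\alpha|^2\alpha$ from the first integral and to show that the other two integrals, together with the remainder of the first, satisfy \eqref{eqn:alpha-rem-bdds}.

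For the cubic term, write $q = e^{ix^2/4t}u$ so that $\int |u|^2u\,\overline{\Psi_v}\,dx = \int |q|^2 q\,\chi(\tfrac{x-vt}{\sqrt t})\,dx$, and recall from \eqref{eqn:alpha-conv-form} that $q_v := t^{-1/2}\alpha(v,t) = \int q(y,t)\,t^{-1/2}\chi(\tfrac{y-vt}{\sqrt t})\,dy$ while $\int\chi(\tfrac{x-vt}{\sqrt t})\,dx = \sqrt t$; hence $\tfrac1t|\alpha|^2\alpha = |q_v|^2q_v\int\chi(\tfrac{x-vt}{\sqrt t})\,dx$ and the remainder equals $\int\!\big(|q|^2q - |q_v|^2q_v\big)\chi(\tfrac{x-vt}{\sqrt t})\,dx$. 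On the support of the cutoff $|q|,|q_v|\lesssim \|u\|_{L^\infty}$, and as in the proof of \eqref{eqn:alpha-phys-bdds} one has $|q(x,t)-q_v|\lesssim t^{-3/4}\|J_0u\|_{L^2(|w-vt|\lesssim\sqrt t)}$ using $t\,\partial_x q = \tfrac{i}{2}e^{-ix^2/4t}J_0u$ (cf.~\eqref{eqn:d-x-q-identity}). Since $|v|>100t^{-1/2}$ forces $|x|\gtrsim t^{1/2}$ on the support, the argument behind \eqref{eqn:q-diff-infty} (i.e.\ \Cref{thm:J0-to-JV} together with the weighted bounds \eqref{eqn:J0-to-JV-errs} on the $E^j_{0,V}$, where $\gamma>2$ is used) gives $\|J_0u\|_{L^2(|w-vt|\lesssim\sqrt t)}\lesssim\|J_Vu\|_{L^2}+\|u\|_{L^2}$. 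Multiplying by the $O(\sqrt t)$ measure of the support and using the pointwise decay $\|u\|_{L^\infty}\lesssim t^{-1/2}$ yields the $L^\infty_v$ bound; for $L^2_v$ one uses instead $\big\|\,\|J_0u\|_{L^2_x(|x-vt|\lesssim\sqrt t)}\big\|_{L^2_v}\lesssim t^{-1/4}\|J_0u\|_{L^2(|x|\gtrsim t^{1/2})}$, which follows from Fubini.

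The potential term uses that for $v\in R_t$ the support $\{|x-vt|<10\sqrt t\}$ of $\Psi_v$ lies in $\{|x|\gtrsim \jBra{vt}\}$, where $|V(x)|\lesssim\jBra{vt}^{-\gamma}\jBra{x}^\gamma|V(x)|$; thus $\big|\int uV\,\overline{\Psi_v}\,dx\big|\lesssim\|u\|_{L^\infty}\jBra{vt}^{-\gamma}\|V\|_{L^1_\gamma}$, which is $O(t^{-1/2-\gamma/2})$ in $L^\infty_v(R_t)$ and, since $\|\jBra{vt}^{-\gamma}\|_{L^2_v(R_t)}\lesssim t^{-1/4-\gamma/2}$, is $O(t^{-3/4-\gamma/2})$ in $L^2_v(R_t)$ — both comfortably stronger than \eqref{eqn:alpha-rem-bdds} when $\gamma>2$. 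For the wave-packet error term, an integration by parts in $x$ moving $\partial_x$ off the cutoff, followed by $t\,\partial_x q = \tfrac{i}{2}e^{-ix^2/4t}J_0u$, gives
\begin{equation*}
    i\int u\,\overline{E_\Psi}\,dx = \frac{1}{4t^2}\int e^{-ix^2/4t}\,J_0u\,\Big(t^{1/2}\chi'\!\left(\tfrac{x-vt}{\sqrt t}\right) - i(x-vt)\chi\!\left(\tfrac{x-vt}{\sqrt t}\right)\Big)\,dx ,
\end{equation*}
and since the bracketed factor has $L^2_x$-norm $\lesssim t^{3/4}$, Cauchy--Schwarz and $\|J_0u\|_{L^2(|x-vt|\lesssim\sqrt t)}\lesssim\|J_Vu\|_{L^2}+\|u\|_{L^2}$ give the bound $t^{-5/4}(\|J_Vu\|_{L^2}+\|u\|_{L^2})$ in $L^\infty_v$ and $t^{-3/2}(\|J_Vu\|_{L^2}+\|u\|_{L^2})$ in $L^2_v(R_t)$, exactly as in the previous paragraph.

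The main obstacle — in fact the only point that is not routine bookkeeping — is the reduction of each perturbative correction (the potential $V$, the wave-packet error $E_\Psi$, and the mismatch between $q$ and its local average $q_v$) to a quantity controlled by $\|J_Vu\|_{L^2}$ on the region $|x|\gtrsim t^{1/2}$. This rests on \Cref{thm:J0-to-JV} and the weighted error estimates \eqref{eqn:J0-to-JV-errs} — precisely the ingredients that force the threshold $\gamma>2$ — together with the observation that the dangerous powers of $t$ in $E_\Psi$ are always paired with a spatial derivative that can be traded for a factor $J_0u$. Finally, one should note that the standing hypothesis $\tilde u(0,t)=0$ is what makes the weighted bounds of \Cref{thm:J0-to-JV}, and hence the local decay used above, applicable.
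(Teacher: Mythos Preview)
Your proof is correct and follows essentially the same route as the paper: differentiate $\alpha$, split into the cubic, wave-packet error, and potential contributions, and reduce each error to $\lVert J_0 u\rVert_{L^2}$ on the region $|x|\gtrsim t^{1/2}$ via \Cref{thm:J0-to-JV}. The only differences are cosmetic. For the potential term you use the bootstrap bound $\lVert u\rVert_{L^\infty}\lesssim t^{-1/2}$ together with $\jBra{vt}^{-\gamma}\lVert V\rVert_{L^1_\gamma}$, whereas the paper invokes the improved local decay $\lVert\jBra{x}^{-1}u\rVert_{L^\infty}\lesssim t^{-3/4}\lVert J_Vu\rVert_{L^2}$; both give bounds strictly stronger than required when $\gamma>2$. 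For the cubic remainder you compare $|q|^2q$ directly with its mollified value $|q_v|^2q_v$ in a single step, while the paper inserts the intermediate quantity $|u(vt,t)|^2$ and splits into two pieces $R_1,R_2$; your version is marginally cleaner but relies on the same difference estimate \eqref{eqn:q-diff-infty}. In either presentation the essential content is the conversion of $J_0u$ to $J_Vu$ on the wave-packet support, which you identify correctly.
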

\begin{proof}
    We have that
    \begin{subequations}
    \begin{align}
        \partial_t \alpha(v,t)  =& \int \partial_t u(x,t) \overline{\Psi_v(x,t)} + u(x,t) \overline{\partial_t \Psi_v(x,t)}\;dx\notag\\
                                =& \mp i\int |u|^2u(x,t) \overline{\Psi_v(x,t)}\;dx\label{eqn:dt-alpha-nonlin}\\
                                &+ \int u(x,t)\overline{(\partial_t + i\Delta)\Psi_v(x,t)} \;dx\label{eqn:dt-alpha-lin-err}\\
                                &+ \int iV u(x,t) \overline{\Psi_v(x,t)}\;dx\label{eqn:dt-alpha-pot-err}
    \end{align}\end{subequations}
    We first consider the terms~\cref{eqn:dt-alpha-lin-err,eqn:dt-alpha-pot-err}, which will be subsumed into the error term $R$, and then show how to extract the leading-order asymptotics from~\eqref{eqn:dt-alpha-nonlin}.  By the assumption that $|v| > 100 t^{-1/2}$ together with the improved local decay estimate~\Cref{lem:improved-local-decay}, we see that
    \begin{equation*}
        |\eqref{eqn:dt-alpha-pot-err}| \lesssim \lVert \jBra{x}^{-1}u \rVert_{L^\infty} \int |\jBra{x} V(x)| \chi(t^{-1/2}(x-vt))\;dx \lesssim t^{-3/4} |vt|^{1-\gamma} \lVert J_V u \rVert_{L^2}
    \end{equation*}
    which is consistent with the decay we require for $R$.  Turning to~\eqref{eqn:dt-alpha-lin-err} and using~\eqref{eqn:wave-packet-deriv}, we see that
    \begin{equation*}\begin{split}
        \left|\eqref{eqn:dt-alpha-lin-err}\right| =& \left|\frac{1}{2t}\int \partial_x \left(e^{it\frac{x^2}{4}} u(x,t)\right) \left(t^{\frac{1}{2}} \chi'\left(\frac{x-vt}{\sqrt{t}}\right) - i (x-vt) \chi\left(\frac{x-vt}{\sqrt{t}}\right)\right)\;dx\right|\\
        =& \left| \frac{1}{4t^2} \int e^{-i\frac{x^2}{4t}} J_0 u(x,t) \left(t^{\frac{1}{2}} \chi'\left(\frac{x-vt}{\sqrt{t}}\right) - i (x-vt) \chi\left(\frac{x-vt}{\sqrt{t}}\right)\right)\;dx\right|\\
    \end{split}
    \end{equation*}
    Using~\Cref{thm:J0-to-JV} and arguing as in~\eqref{eqn:q-diff}, we see that
    \begin{equation*}\begin{split}
        \left|\eqref{eqn:dt-alpha-lin-err}\right| \lesssim& t^{-5/4} \left(\lVert J_V u \rVert_{L^2} + \lVert u \rVert_{L^2}\right) 
    \end{split}
    \end{equation*}
    Moreover, noting that this expression is a convolution, we can apply Young's inequality to find that
    \begin{equation*}
        \left\lVert \eqref{eqn:dt-alpha-lin-err} \right\rVert_{L^2(R_t)} \lesssim t^{-3/2} \left(\lVert J_V u \rVert_{L^2} + \lVert u \rVert_{L^2}\right)
    \end{equation*}
    Now, let us consider the nonlinear term~\eqref{eqn:dt-alpha-nonlin}.  We have
    \begin{equation}\begin{split}
        \eqref{eqn:dt-alpha-nonlin} =& \mp\frac{i}{t}\alpha(v,t)|\alpha(v,t)|^2 \\
        &\mp i\alpha(v,t) (|u(vt, t)|^2 - \frac{1}{t}|\alpha(v,t)|^2)\\
        &\mp i\int (|u(x,t)|^2 - |u(vt, t)|^2)u(x,t) \overline{\Psi_v(x,t)}\;dx \\
        =& \mp\frac{i}{t}\alpha(v,t)|\alpha(v,t)|^2 + R_1 + R_2
    \end{split}
    \end{equation}
    so it is enough to show that the remainder terms obey the estimates~\eqref{eqn:alpha-rem-bdds}.  For $R_1$, a simple application of~\eqref{eqn:alpha-conv-bounds} and~\eqref{eqn:alpha-phys-bdds} shows that
    \begin{equation*}\begin{split}
        \lVert R_1 \rVert_{L^\infty(R_t)} \lesssim& \lVert \alpha \rVert_{L^\infty(R_t)}\left(t^{-1/2}\lVert \alpha \rVert_{L^\infty(R_t)} + \lVert u \rVert_{L^\infty(R_t)}\right) \lVert |u(vt,t)| - t^{-1/2}|\alpha(v,t)| \rVert_{L^\infty(R_t)}\\
        \lesssim& M^2\epsilon^2 t^{-5/4} \left(\lVert J_V u \rVert_{L^2} + \lVert u \rVert_{L^2}\right)
    \end{split}\end{equation*}
    and
    \begin{equation*}\begin{split}
        \lVert R_1 \rVert_{L^2(R_t)} \lesssim& \lVert \alpha \rVert_{L^\infty(R_t)}\left(t^{-1/2}\lVert \alpha \rVert_{L^\infty(R_t)} + \lVert u \rVert_{L^\infty(R_t)}\right) \lVert |u(vt,t)| - t^{-1/2}|\alpha(v,t)| \rVert_{L^2(R_t)}\\
        \lesssim& M^2\epsilon^2 t^{-3/2} \left(\lVert J_V u \rVert_{L^2} + \lVert u \rVert_{L^2}\right)
    \end{split}\end{equation*}
    Which is consistent with~\eqref{eqn:alpha-rem-bdds} since $M \ll \epsilon^{-2/3}$.  Turning to $R_2$, by using the fact that $|u| = |q|$, we see that
    \begin{equation*}
        |R_2(v,t)| \lesssim M^2 \epsilon^2 t^{-1/2} \int |\chi(t^{1/2} z)| |q((v-z)t, t) - q(vt,t)| t^{1/2}\;dz
    \end{equation*}
    so, applying the same arguments as in the proof of~\eqref{eqn:alpha-phys-bdds}, we see that $R_2$ satisfies~\eqref{eqn:alpha-rem-bdds}. %
\end{proof}

Now, we use~\Cref{lem:alpha-dynamics-lem} to obtain precise asymptotics for $\alpha$.
\begin{prop}\label{prop:alpha-asymptotics}
    For $t \geq 1$, we have the estimate
    \begin{equation}\label{eqn:alpha-asympt-bdds}
        \sup_{t \geq 1} \lVert \alpha(v,t) \rVert_{L^\infty(R_t)} \lesssim \epsilon
    \end{equation}
    Moreover, assuming that the bootstrap bounds~\eqref{eqn:bootstrap-hypos} hold globally, the leading order asymptotics of $\alpha$ for $(v,t) \in \{(v,t) : t \geq 1, v \in \overline{R_t}\}$ are given by
    \begin{equation}\label{eqn:alpha-asymptotics}
        \alpha(v,t) = \exp\left(\mp i \int_1^t \frac{i}{s} |\alpha(v,s)|^2\;ds\right) A(v) + \mathfrak{R}(v,t)
    \end{equation}
    where
    \begin{equation}\label{eqn:frR-v-infty-bdd}
        \lVert \mathfrak{R}(v,t) \rVert_{L^\infty_v(R_t)} \lesssim \epsilon t^{-1/4+\delta}
    \end{equation}
    \begin{equation}\label{eqn:frR-v-2-bdd}
        \lVert \mathfrak{R}(v,t) \rVert_{L^2_v(R_t)} \lesssim \epsilon t^{-1/2+\delta}
    \end{equation}
\end{prop}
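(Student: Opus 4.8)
The plan is to integrate the ODE of~\Cref{lem:alpha-dynamics-lem} for each fixed $v$ on the interval $[\tau_v,t]$, where $\tau_v := \max(1,10^4|v|^{-2})$ is the ``entry time'' at which $v$ first enters $R_t$ (so $v \in R_t$ iff $t > \tau_v$, up to the boundary). The structural point is that the leading term $\mp\frac{i}{t}|\alpha|^2\alpha$ is Hamiltonian: multiplying $\partial_t\alpha = \mp\frac{i}{t}|\alpha|^2\alpha + R$ by $\overline{\alpha}$ and taking real parts, it drops out and
\[
    \partial_t|\alpha(v,t)|^2 = 2\,\mathrm{Re}\!\left(\overline{\alpha(v,t)}\,R(v,t)\right),
\]
so $|\alpha(v,\cdot)|^2$ changes only through the remainder $R$. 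By~\Cref{prop:weighted-est} and conservation of mass, $\lVert J_V u\rVert_{L^2}+\lVert u\rVert_{L^2}\lesssim\epsilon t^{\delta}$, whence~\Cref{lem:alpha-dynamics-lem} gives $\lVert R(v,t)\rVert_{L^\infty(R_t)}\lesssim\epsilon t^{-5/4+\delta}$ and $\lVert R(v,t)\rVert_{L^2(R_t)}\lesssim\epsilon t^{-3/2+\delta}$, both integrable in $t$ since $\delta<1/4$.

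\textbf{The uniform bound~\eqref{eqn:alpha-asympt-bdds}.} First one needs $|\alpha(v,\tau_v)|\lesssim\epsilon$. If $|v|>100$ then $\tau_v=1$, and $|\alpha(v,1)|\le\lVert\chi\rVert_{L^1}\lVert u(1)\rVert_{L^\infty}\lesssim\epsilon$ from~\eqref{eqn:initial-cond-size} and the initial-data estimates of~\Cref{sec:bootstrap}. If $|v|\le100$ then $\tau_v=10^4|v|^{-2}\ge1$ and $v$ lies on the boundary of $R_{\tau_v}$; here I would invoke~\eqref{eqn:alpha-DFT-bdds} (valid up to the boundary by continuity), which gives $|\alpha(v,\tau_v)|\lesssim|\tilde u(-v/2,\tau_v)|+\epsilon\tau_v^{-1/4+\delta}+\epsilon\tau_v^{1-\gamma/2}\lesssim|\tilde u(-v/2,\tau_v)|+\epsilon$ because $\delta<1/4$ and $\gamma>2$. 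Using $\tilde u(0,\tau_v)=0$ (hypothesis~\eqref{eqn:zero-freq-hypo}) together with the identity $\lVert J_V u(t)\rVert_{L^2}=\lVert\partial_k\DFT e^{-itH}u(t)\rVert_{L^2}$, Cauchy--Schwarz yields $|\tilde u(-v/2,\tau_v)|\lesssim|v|^{1/2}\lVert J_V u(\tau_v)\rVert_{L^2}\lesssim\epsilon|v|^{1/2}\tau_v^{\delta}\lesssim\epsilon|v|^{1/2-2\delta}\lesssim\epsilon$, again since $\delta<1/4$ and $|v|\le100$. Thus $|\alpha(v,\tau_v)|\lesssim\epsilon$ in all cases. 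Now put $m(v,t)=\sup_{\tau_v\le s\le t}|\alpha(v,s)|$, finite by the crude bootstrap bound $\lVert\alpha(\cdot,s)\rVert_{L^\infty}\lesssim M\epsilon$ (from~\eqref{eqn:alpha-conv-bounds} and~\eqref{eqn:bootstrap-hypos}). Integrating the displayed identity from $\tau_v$ and using $|\alpha(v,s)|\le m(v,t)$ and $\int_{\tau_v}^\infty|R(v,s)|\,ds\lesssim\epsilon$ gives, for every $t'\in[\tau_v,t]$, $|\alpha(v,t')|^2\lesssim\epsilon^2+\epsilon\,m(v,t)$; taking the supremum over $t'$ yields $m(v,t)^2\lesssim\epsilon^2+\epsilon\,m(v,t)$, and solving this quadratic inequality gives $m(v,t)\lesssim\epsilon$ uniformly in $v$ and $t$, i.e.~\eqref{eqn:alpha-asympt-bdds}. (On $[1,T]$ this follows from~\eqref{eqn:bootstrap-hypos}; the statement for all $t\ge1$ then follows once the bootstrap is closed.)

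\textbf{The asymptotics~\eqref{eqn:alpha-asymptotics}.} Assuming now the bootstrap holds globally, define the modified profile $\tilde\alpha(v,t)=e^{\Phi(v,t)}\alpha(v,t)$ with the purely imaginary phase $\Phi(v,t)=\pm i\int_1^t\frac{|\alpha(v,s)|^2}{s}\,ds$ chosen precisely so that $\partial_t\Phi$ cancels the Hamiltonian term; then $|e^{\Phi}|\equiv1$ and, for $t>\tau_v$, $\partial_t\tilde\alpha(v,t)=e^{\Phi(v,t)}R(v,t)$. Since $R$ is integrable in time, $\tilde\alpha(v,t)$ converges as $t\to\infty$ to a limit $A(v)$ (the lower endpoint $\tau_v$ in $\Phi$ is harmless, as $\Phi$ is well defined with $|e^\Phi|=1$ regardless of the behaviour of $\alpha$ on $[1,\tau_v]$), and
\[
    \mathfrak{R}(v,t)=\alpha(v,t)-e^{-\Phi(v,t)}A(v)=-e^{-\Phi(v,t)}\int_t^\infty e^{\Phi(v,s)}R(v,s)\,ds.
\]
Hence $|\mathfrak{R}(v,t)|\le\int_t^\infty|R(v,s)|\,ds\lesssim\epsilon\int_t^\infty s^{-5/4+\delta}\,ds\lesssim\epsilon t^{-1/4+\delta}$, giving~\eqref{eqn:frR-v-infty-bdd}; and since $R_t\subseteq R_s$ for $s\ge t$, Minkowski's inequality gives $\lVert\mathfrak{R}(v,t)\rVert_{L^2_v(R_t)}\le\int_t^\infty\lVert R(v,s)\rVert_{L^2_v(R_s)}\,ds\lesssim\epsilon\int_t^\infty s^{-3/2+\delta}\,ds\lesssim\epsilon t^{-1/2+\delta}$, giving~\eqref{eqn:frR-v-2-bdd}. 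The main obstacle is the uniform bound: the bootstrap controls $|\alpha|$ only with a loss of $M$, so the clean $O(\epsilon)$ estimate~\eqref{eqn:alpha-asympt-bdds} must be extracted by combining the almost-conservation of $|\alpha|^2$ with a genuinely $M$-free bound at the entry time $\tau_v$, and obtaining the latter forces the use of the improved local decay coming from $\tilde u(0)=0$ exactly in the regime $|v|\approx t^{-1/2}$, together with the restriction $\delta<1/4$; once this is in place the asymptotics are a routine integration of an ODE with an $L^1_t$ forcing term.
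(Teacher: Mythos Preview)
Your proof is correct and follows the same overall architecture as the paper: fix $v$, control $\alpha$ at the entry time $\tau_v$ using the zero-energy hypothesis~\eqref{eqn:zero-freq-hypo}, then integrate the ODE of~\Cref{lem:alpha-dynamics-lem} forward using the Hamiltonian structure and the integrability of $R$. Two implementation details differ. First, for the entry-time bound when $|v|$ is not large, the paper works in physical space: the wave packet $\Psi_v(\cdot,t_0(v))$ is supported in $|x|\lesssim t_0(v)^{1/2}$, so~\Cref{lem:improved-low-x-bdds} gives $|\alpha(v,t_0(v))|\lesssim\epsilon\,t_0(v)^{-1/4+\delta}$ directly. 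You instead route through distorted Fourier space via~\eqref{eqn:alpha-DFT-bdds} and the H\"older estimate $|\tilde u(-v/2,\tau_v)|\lesssim|v|^{1/2}\lVert J_Vu(\tau_v)\rVert_{L^2}$. Both are valid; the paper's version yields the slightly sharper decay $\epsilon\,t_0^{-1/4+\delta}$ (which it later reuses in~\Cref{sec:main-thm-proof} to show $A(v)$ is small near $v=0$), whereas your version just gives $O(\epsilon)$, which is all the proposition asks. Second, for the propagation in $t$, the paper introduces the phase-rotated variable $\beta=e^{\Phi}\alpha$ from the outset, so that $|\partial_t\beta|=|R|$ and a single integration gives $|\beta(t)-\beta(t_0)|\lesssim\epsilon$; you instead track $\partial_t|\alpha|^2=2\,\mathrm{Re}(\bar\alpha R)$ and close via the quadratic inequality $m^2\lesssim\epsilon^2+\epsilon m$. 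The paper's route is marginally cleaner since it avoids the auxiliary supremum $m$, but the arguments are equivalent in strength.
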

\begin{proof}
    We begin by proving~\eqref{eqn:alpha-asympt-bdds}.  Define
    \begin{equation*}
        \beta(v,t) = \exp\left(\pm i \int_1^t \frac{i}{s} |\alpha(v,s)|^2\;ds\right)\alpha(v,t)
    \end{equation*}
    It follows from~\Cref{lem:alpha-dynamics-lem} that $\partial_t \beta(v,t) = R(v,t)$, where $R$ obeys~\eqref{eqn:alpha-rem-bdds}.  Moreover, since $|\alpha| = |\beta|$, it suffices to prove that $\beta$ is bounded for $v$ in $R_t$.  Define $t_0(v) = \max(1, 100v^{-2})$, and observe that for $|v| > 10$, $v \in R_{t_0(v)}$, while for $|v| \leq 10$, $v \in \partial R_{t_0(v)}$.  We first show that $\beta(v, t_0(v))$ is bounded pointwise in $v$, and then use~\Cref{lem:alpha-dynamics-lem} to extend this bound to $t > t_0(v)$.  If $|v| \geq 10$, then $t_0(v) = 1$ and
    \begin{equation*}
        |\beta(v, t_0(v)| = |\alpha(v, 1)| \lesssim \lVert u(x,1) \rVert_{L^\infty_x} \lesssim \epsilon
    \end{equation*}
    where the first inequality is~\eqref{eqn:alpha-conv-bounds} and the second comes from using~\Cref{lem:H-1-infty-decay} together with~\eqref{eqn:initial-cond-size}.  If $|v| < 10$, then the wave packet $\Psi_v(x,t_0(v))$ defined in~\eqref{eqn:wave-packet-def} is supported in $|x| < 1000 t_0^{1/2}(v)$ with $\lVert \Psi_v(x,t_0(v)) \rVert_{L^1} = \sqrt{t_0(v)}$.  Thus, using~\Cref{lem:improved-low-x-bdds}, we see that
    \begin{equation}\label{eqn:beta-t-0-size}\begin{split}
        |\beta(v, t_0(v))| =& |\alpha(v, t_0(v))| \\
        \lesssim& \sqrt{t_0(v)}\lVert u(x,t_0(v)) \rVert_{L^\infty(|x|< 1000t_0^{1/2}(v))}\\
        \lesssim& \epsilon [t_0(v)]^{-1/4+\delta}\\
        \ll& \epsilon
    \end{split}\end{equation}
    It follows that~\eqref{eqn:alpha-asympt-bdds} holds for $t = t_0(v)$.  For $t > t_0(v)$, we note that
    \begin{equation}\label{eqn:beta-int-est}\begin{split}
        |\beta(v,t) - \beta(v, t_0(v))| \leq& \int_{t_0(v)}^t |\partial_s \beta(v,s)|\;ds\\
        \lesssim& \int_{t_0(v)}^t O(t^{-5/4} (\lVert J_V u \rVert_{L^2} + \lVert u \rVert_{L^2} ))\;ds\\
        \lesssim& \epsilon
    \end{split}\end{equation}
    which completes the proof of~\eqref{eqn:alpha-asympt-bdds}.

    Now, let us assume~\eqref{eqn:bootstrap-hypos} and prove~\eqref{eqn:alpha-asymptotics}.  By modifying~\eqref{eqn:beta-int-est}, we see that $\beta(v,t)$ is Cauchy as $t \to \infty$, so we can define \begin{equation}
        A(v) = \lim_{t \to \infty} \beta(v,t)
    \end{equation}
    To prove~\eqref{eqn:frR-v-infty-bdd} and~\eqref{eqn:frR-v-2-bdd}, we note that
    \begin{equation*}
        |\frR(v,t)| = |\beta(v,t) - A(v)| \leq \int_t^\infty |\partial_s \beta(v,s)|\;ds = \int_t^\infty |R(v,s)|\;ds
    \end{equation*}
    so the result follows from~\eqref{eqn:alpha-rem-bdds}.
\end{proof}

\section{Proof of \texorpdfstring{\Cref{thm:main-theorem}}{Theorem 1}}\label{sec:main-thm-proof}

We are now in a position to prove~\Cref{thm:main-theorem}:
\begin{proof}[Proof of~\Cref{thm:main-theorem}]
    Let us begin by assuming~\eqref{eqn:bootstrap-hypos}.  By~\Cref{prop:weighted-est}, we have that
    \begin{equation*}
        \sup_{t \in [1,T]}t^{-\delta}\lVert J_V u \rVert_{L^2} \lesssim C\epsilon
    \end{equation*}
    On the other hand, by using~\Cref{lem:alpha-bounds} and~\eqref{eqn:alpha-asympt-bdds}, we have that
    \begin{equation*}\begin{split}
        \lVert u(x,t) \rVert_{L^\infty(|x| \geq 100 t^{1/2})} =& \lVert u(vt,t) \rVert_{L^\infty_v(R_t)}\\
        \leq& t^{-1/2} \lVert \alpha(v,t) \rVert_{L^\infty} + \lVert u(vt,t) - t^{-1/2}e^{-i\frac{x^2}{4t}} \alpha(v,t)\rVert_{L^\infty_v(R_t)}\\
        \lesssim& \epsilon t^{-1/2} + \epsilon t^{-3/4+\delta}
    \end{split}\end{equation*}
    so~\eqref{eqn:bootstrap-up} follows.  A standard continuity argument shows that~\eqref{eqn:bootstrap-up} holds for all time, which together with~\Cref{lem:improved-low-x-bdds} is sufficient to prove~\eqref{eqn:u-decay}.  To prove the asymptotics~\eqref{eqn:u-asympt}, we note that in the region $|x| > 100 t^{1/2}$,
    \begin{equation*}
        u(x,t) = t^{-1/2} \exp\left(i\frac{x^2}{4t}\right) \alpha(x/t,t) + o_{t^{1/2}L^\infty \cap L^2}(1)
    \end{equation*}
    Replacing $\alpha$ by the asymptotic equivalent derived in~\Cref{prop:alpha-asymptotics} and expanding $\alpha$ in terms of $\tilde{u}$ using~\Cref{lem:alpha-bounds}, we find that
    \begin{equation*}\begin{split}
        u(x,t) =& t^{-1/2} \exp\left(i\frac{x^2}{4t} \mp i \int_1^t \frac{i}{s} |\alpha(v,s)|^2 \;ds\right) A(x/t,t) + o_{t^{1/2}L^\infty \cap L^2}(1)\\
        =& t^{-1/2}\exp\left(i\frac{x^2}{4t} \mp i \int_1^t \frac{i}{s} |\tilde{u}(-x/(2t),s)|^2 \;ds\right) A(x/t,t) + o_{t^{1/2}L^\infty \cap L^2}(1)
    \end{split}\end{equation*}
    Taking $u_\infty = A$, this gives~\eqref{eqn:u-asympt} for $|x| > 100 t^{1/2}$.  To obtain the result for $|x| \leq 100 t^{1/2}$, we first notice that for $|v| < 100 t^{-1/2}$, $t < t_0(v)$, so by~\eqref{eqn:beta-t-0-size} and~\eqref{eqn:frR-v-infty-bdd}
    \begin{equation}\begin{split}
        |A(v)|  =& |\alpha(v,t_0(v))| + |\fR(v,t_0(v))|\\
                \lesssim& \epsilon (t_0(v)^{-1/4+\delta} + t_0(v)^{1-\gamma/2})\\
                \lesssim& \epsilon t^{-1/4+\delta}
    \end{split}\end{equation}
    where we have used the definition~\eqref{eqn:wave-packet-def} on the second line and~\Cref{lem:improved-low-x-bdds} on the second to last line.  In particular, for $|x| < 100 t^{1/2}$ the leading order term has the same size as the error term:
    \begin{equation*}
        t^{-1/2}\exp\left( i\frac{x^2}{4t} \mp i \int_1^t \frac{i}{s} |\tilde{u}(-x/(2t),s)|^2\;ds\right) A(x/t,t) = o_{t^{1/2}L^\infty_x \cap L^2_x}(1)
    \end{equation*}
    Since in this region $u(x,t) = o_{t^{1/2}L^\infty_x \cap L^2_x}(1)$ by~\Cref{lem:improved-low-x-bdds}, we trivially have that
    \begin{equation*}
        u(x,t) - t^{-1/2}\exp\left( i\frac{x^2}{4t} \mp i \int_1^t \frac{i}{s}|\tilde{u}(-x/(2t),s)|^2\;ds\right) A(x/t,t) = o_{t^{1/2}L^\infty_x \cap L^2_x}(1)
    \end{equation*}
    which completes the proof.
\end{proof}

\bibliographystyle{plain}
\bibliography{sources}
\end{document}